\renewcommand{\epsilon}{\varepsilon}
\newcommand{\numberset}{\mathbb}
\newcommand{\N}{\numberset{N}}
\newcommand{\R}{\numberset{R}}
\newcommand{\C}{\numberset{C}}
\newcommand{\Z}{\numberset{Z}}
\newcommand{\tr}{\mathsf{T}} 
\newcommand{\D}{\mathcal{D}}
\newcommand{\F}{\mathscr{F}}
\DeclareMathOperator{\supp}{supp}
\DeclareMathOperator{\sgn}{sgn}
\newcommand{\frk}[1]{\ensuremath\mathfrak{#1}}
\theoremstyle{remark}
\newtheorem{remark}{Remark}
\newtheorem*{comments}{Comments}
\theoremstyle{definition}
\newtheorem{definition}{Definition}
\newtheorem*{notations}{Notations}
\theoremstyle{theorem}
\newtheorem{theorem}{Theorem}
\newtheorem{prop}{Proposition}
\newtheorem{lemma}{Lemma}
\DeclareMathOperator{\dom}{dom}
\begin{document}
\keywords{Dirac-Coulomb equation. Generalized Strichartz estimates. Local well-posedness.}
\subjclass{35Q41}

\title[Strichartz estimates for the massless Dirac-Coulomb equation]{STRICHARTZ ESTIMATES FOR THE 2D AND 3D MASSLESS DIRAC-COULOMB EQUATIONS AND APPLICATIONS}

\author{Elena Danesi}
\address{Elena Danesi: 
Dipartimento di Matematica, Universit\'a degli studi di Padova, Via Trieste, 63, 35131 Padova PD, Italy}
\email{edanesi@math.unipd.it}

\maketitle 

\begin{abstract}
In this paper we continue the analysis of the dispersive properties of the 2D and 3D massless Dirac-Coulomb equations that has been started in \cite{CS16} and \cite{CSZ23}. We prove a priori estimates of the solution of the mentioned systems, in particular Strichartz estimates with an additional angular regularity, exploiting the tools developed in the previous works. As an application, we show local well-posedness results for a Dirac-Coulomb equation perturbed with Hartree-type nonlinearities.
\end{abstract}

\section{Introduction}

In this paper we consider the Cauchy problem associated with the massless Dirac equation with an electric \emph{Coulomb potential} in 2 and 3 spatial dimensions. It reads as follows:
\begin{equation}
\label{eq:DC}
\begin{cases}
 i \partial_t u + \D_n u - \frac{\nu}{\lvert x \rvert} u=0, \\
 u(0,x) = u_0(x)
\end{cases}
\end{equation}
where $u(t,x) \colon \R_t \times \R_x^n \to \C^N$, $n \in \{2,3\}$, $N = 2^{\lceil \frac{n}2 \rceil}$, $\nu \in \big [ - \frac{n-1}2, \frac{n-1}2 \big ]$ and $\D_n$ is the Dirac operator on $\R^n$. In order to define the operator we introduce two sets of matrices:
the Pauli matrices $\sigma_j \in \mathcal M_{2\times 2} (\C)$
\[
\sigma_1 = \begin{pmatrix}  0 & 1 \\  1 & 0 \end{pmatrix}, \quad 
\sigma_2 = \begin{pmatrix}  0 &-i \\  i &0 \end{pmatrix}, \quad
\sigma_3 = \begin{pmatrix} 1 & 0 \\ 0 & -1 \end{pmatrix}
\]
and the Dirac matrices $\alpha_j \in \mathcal M_{4\times4}(\C)$
\[
\alpha_j = \begin{pmatrix} 0_{2\times2} & \sigma_j \\ \sigma_j & 0_{2\times2} \end{pmatrix}, \quad j=1,2,3.
\]
Then we can define the Dirac operator on $\R^2$ as
\[
\D_2 = - i \sigma \cdot \nabla_{\R^2} = - i (\sigma_1 \partial_x + \sigma_2 \partial_y)
\]
and on $\R^3$ as
\[
\D_3 = -i \alpha \cdot \nabla_{\R^3} = - i \sum_{j=1}^3 \alpha_j \partial_j.
\]

The massless Dirac equation is widely used to describe physical systems from Quantum Mechanics;  the 3D equation is a model for the dynamics of massless fermions, such as the neutrinos. The 2D equation appears in the study of propagation of waves spectrally concentrated near some singular points on 2-dimensional honeycomb structures. We remark that among the materials which enjoy this structure one finds the \emph{graphene}, a single-layer sheet of hexagonally-arranged carbon atoms, that is attracting a lot of interest in the recent years due to its countless technological applications (see \cite{FW12} for a survey and the references therein). Notice that also with the Coulomb potential, describing interactions between particles, the system remains physically interesting (e.g., non-perfect graphene).\\

The restriction on the parameter $\nu$ comes form the fact that, according to Quantum Mechanics, we should work with self-adjoint operators on $L^2(\R^n;\C^N)$. We recall that $\D_{n} - \frac{\nu}{\lvert x \rvert}$, defined on $C_0^\infty(\R^n \backslash \{0\}; \C^N)$, is essentially self-adjoint\footnote{That is, it admits only one self-adjoint extension, its closure.} with domain $H^1(\R^n; \C^N)$ if and only if $n=2$ and $\nu=0$ or $n=3$ and $\lvert \nu \rvert < \frac{\sqrt 3}2$ 
. If $n=3$ and $\lvert \nu \rvert = \frac{\sqrt3}2$ the Dirac-Coulomb operator is still essentially self-adjoint but with domain contained in $H^{\frac12} (\R^3;\C^4)$. For any other values of $\nu$ there exist infinitely many different self-adjoint extensions. However, it has been shown that for $\nu$ in the range we consider it is possible to define a distinguished (i.\ e.\ ``physically relevant'') self-adjoint extension. In particular, for $\lvert \nu \rvert < \frac{n-1}2$ one can choose the self-adjoint extension with domain contained in $H^{\frac12} (\R^n; \C^N)$ (see the recent works \cite{ELS19}, \cite{Mul16} and the references therein).

\medskip

The free massless Dirac equation can be listed within the class of \emph{dispersive equations}. 
One way to see it is to look at the link between the Dirac equation and a system of decoupled wave equations; by conjugating $i \partial_t + \D_n$ one gets
\begin{equation}
\label{eq:Dirac-wave}
(i \partial_t + \D_n)(i \partial_t - \D_n) = (- \partial_{tt}^2 + \Delta) \mathbbm 1_{N\times N},
\end{equation}
since the sets of matrices $\{\sigma_k \}_{k =1,2}$, $\{\alpha_j \}_{j =1,2,3}$ are chosen in order to satisfy the anti commutation relations
\[
\begin{split}
& \sigma_k \sigma_j + \sigma_j \sigma_k = 2 \delta_{k,j} \mathbbm 1_{2\times2}, \quad k,j=1,2 \\
& \alpha_k \alpha_j + \alpha_j \alpha_k = 2 \delta_{k,j} \mathbbm 1_{4\times 4}, \quad k,j=1,2,3.
\end{split}
\]

In the last years, many tools have been developed in order to quantify the dispersion of a system. Among these we find a priori estimates on the solutions, such as Strichartz estimates and their generalizations, which appear to be fundamental tools in the study, e.g., of local and global well-posedness of nonlinear systems (possibly with rough regularity data). We recall the \emph{Strichartz estimates} for the free Dirac flow:
\[
\lVert e^{it\D_n} u_0  \lVert _{L^p_t L^q_x} \le C \lVert u_0 \rVert_{\dot H^{\frac{n}2 - \frac{n}q - \frac1p}},
\]
for any couple $(p,q)$ satisfying the wave admissibility condition
\begin{equation}
\label{adm_cond_freecase}
p,q \in [2, +\infty],\quad \frac1p \le \frac{n-1}2 \Big ( \frac12 - \frac1q \Big), \quad (p,q,) \ne \big ( \tfrac4{n-1}, \infty \big ), (\infty, \infty),
\end{equation}
where we denote by $e^{it\D_n}$ the propagator for solutions of \eqref{eq:DC} with $\nu = 0$. Thanks to the relation \eqref{eq:Dirac-wave}, these estimates are deduced directly from the one that hold for the wave equation.
\medskip 

It has been observed (see \cite{JWY12} for a survey and the references therein) that one can enlarge the set of admissible couples, requiring some additional regularity in the angular variable. For the free Dirac equation one has the following \emph{generalized Strichartz estimates}
\begin{equation}
\label{eq:stri_wave}
\lVert e^{it\D_n} u_0 \rVert_{L^p_t L^q_{\lvert x \rvert} L^2_\theta (\R \times \R^n)} \le C \lVert u_0 \rVert_{\dot H^{\frac{n}2 - \frac{n}q - \frac1p}(\R^n)}
\end{equation}
for $(p,q)$ satisfying 
\[
p,q \in [2, + \infty], \quad \frac{n-1}q + \frac1p < \frac{n-1}2 \text{ or } (p,q) = (\infty,2), \quad (p,q) \ne (2, \infty), (\infty, \infty).
\]
In presence of potential it is a natural question to ask whether the dispersion of the system, in the sense described above, is preserved. This analysis for the Coulomb case has been started in \cite{CS16}. The authors showed the validity of the following local smoothing type estimates for solutions $u$ of \eqref{eq:DC}
\begin{equation}
\label{eq:local_smo_CS}
\Big \lVert \lvert x \rvert ^{-\alpha} \big \lvert \D_n - \tfrac{ \nu}{\lvert x \rvert} \big \rvert^{\frac12 - \alpha} u \Big \rVert_{L^2_t L^2_x} \le C \lVert u_0 \rVert_{L^2}
\end{equation}
where $\frac12 < \alpha < \sqrt{ k_n^2 - \nu^2} + \frac12$ and $k_n = \frac{n-1}2$. However, the case $\alpha = \frac12$ is excluded, therefore it doesn't allow to deduce Strichartz estimates using the standard Duhamel formulation and the combination of it with the Strichartz estimates for the free flow. Then, in \cite{CSZ23}, the authors proved asymptotic estimates for the generalized eigenfunctions of the Dirac-Coulomb operator on $\R^3$  and, as an application, some generalized Strichartz estimates for the solutions of \eqref{eq:DC} on $\R^3$.

\medskip

The methods used to obtain these results are inspired by the ones developed in \cite{BPST-Z03} and \cite{MZZ13} to investigate dispersive properties of the wave and Schrödinger equations with inverse-square potential. The Coulomb and the inverse-square potentials leave, respectively, the massless Dirac and the wave/Schrödinger equations invariant under their natural scaling, that in the massless Dirac case is  $u_\lambda (t,x) = u ( \lambda^{-1} t, \lambda^{-1} x)$, $\lambda >0$. This kind of invariance prevents, typically, the use of perturbative methods in the study of the effect of these potentials, requiring the development of new tools. Moreover, the Coulomb-type potentials seem to appear as a natural threshold for the validity of global-in-time Strichartz estimates, in their decays at infinity. Indeed, if we look at the literature concerning Strichartz estimates for the Dirac equation perturbed with electromagnetic potentials we find two type of results; in \cite{DF08} the authors consider the equation $i \partial_t u + \D u + V(x) u = 0$ on $\R^3$, where $V(x) = V(x)^*$ is a $4\times4$ complex valued matrix, decaying (slightly) faster at infinity with respect to the Coulomb potential, precisely such that
\[
\lvert V(x) \rvert \le \frac{\delta}{\lvert x \rvert (1 + \lvert \log \rvert x \rvert \rvert)^{\sigma}}, \quad \sigma > 1.
\]
They showed that the dispersion of the system is preserved, i.e.\ the solution $u$ enjoys the same Strichartz estimates, endpoint excluded, as the free one. The same result can be also extended in the 2-dimensional setting. Other results in this direction include e.g. \cite{BDF11}, \cite{EGG18}. Instead, if one consider potential decaying slower at infinity, it is possibile to construct potential such that the associated system is no more dispersive (in the sense described above). More precisely, in \cite{AFG13} the authors consider the magnetic Dirac equation $i \partial_t u + \D_Au=0$, defining the vector field $A$ as 
\[
A(x) = \lvert x 	\rvert^{-\delta} M x, \quad 1 < \delta < 2, \quad M = \begin{pmatrix} 0&1&0\\-1&0&0\\0&0&0 \end{pmatrix}.
\]
Then, the solutions $u$ of the associated Cauchy problems do not satisfy the Strichartz estimates \eqref{eq:stri_wave} for any admissible couple.
 
 \medskip
The aim of this work is two-folded; firstly, we continue the analysis of the dispersion of \eqref{eq:DC}. We extend, compared to the result in \cite{CSZ23}, the set of admissible couples for the validity of generalized Strichartz estimates in 3D, we prove similar estimate for the 2D system and we also provide new local smoothing estimates.
Then, we apply the obtained results to the study of local well-posedness of nonlinear systems. Before we state the results we introduce the following notations that will be used throughout the paper.

\begin{notations}
We denote with $\D_{n,\nu}$ the Dirac-Coulomb operator acting on $\R^n$, that is $\D_{n,\nu} \coloneqq \D_n - \frac{\nu}{\lvert x \rvert}$ for $n=2,3$ and, with an abuse of notation, we will omit the index $n$ when will be clear from the context.\\

We denote with $\dot H^s$, $s \in \R$, the standard homogeneous Sobolev spaces with the norm $\lVert u \rVert_{\dot H^s} = \big \lVert \lvert D \rvert^s u \big \rVert_{L^2}$ where $\lvert D \rvert = \sqrt {- \Delta}$. Instead we use $\dot H^s_{\D_{\nu}}$ to denote the homogeneous Sobolev spaces induced by the action of the Dirac-Coulomb operator, i.e., with norm $\lVert u \rVert_{\dot H^s_{\D_{\nu}}} = \big \lVert \lvert \D_{\nu} \rvert^s u \big \rVert_{L^2}$. \\

We say that $u \in L^2(\R^n; \C^N)$ is \emph{Dirac-radial} if it coincides with his projection on the first partial wave subspaces. Then, we call $u \in L^2(\R^n; \C^N)$ \emph{Dirac-non radial} if it is orthogonal to Dirac-radial functions. We postpone to Subsection \ref{part_wave_decomp} (\Cref{remark:Dirac-rad}) the precise definitions.\\
With an abuse of notation, we use the term function to refer to both scalars and vector-valued functions. Their nature will be clear from the context.
\end{notations}

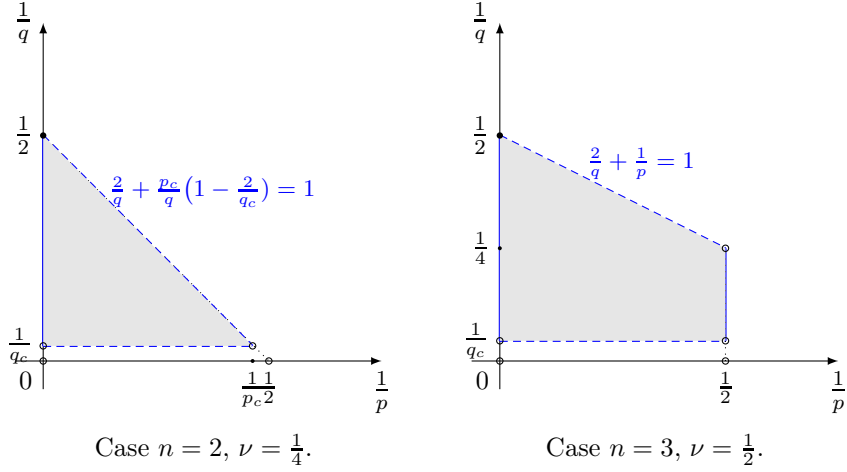
\begin{figure}
\centering
\subfloat[][Case $n=2$, $\nu=\frac14$.]{%
\begin{tikzpicture}[scale=1.5, >=latex]
\draw[->] (-.25,0) -- (3,0) coordinate (x axis); \node[below] at (3,0) {$\frac1p$};
\draw[->] (0,-.25) -- (0,3) coordinate (y axis); \node[left] at (0,3) {$\frac1q$};
\draw[dotted] (0,2) -- (2,0);
\draw[thick, densely dashed, blue] (0,2) -- (1.856, 0.134);
\draw[thick, densely dashed, blue] (0,0.134) -- ((1.856, 0.134);
\draw[thick, blue] (0,2) -- (0,0.134);
\fill[gray!20] (0,0.134) -- (1.856,0.134) -- (0,2) -- cycle;
\fill (1.856,0) circle (0.5pt) node[below] {\footnotesize $\frac1{p_c}$};
\draw (0,0) circle (0.8pt) node[below left] {\footnotesize $0$};
\draw (2,0) circle (0.8pt) node[below] {\footnotesize $\frac12$};
\draw (0,0.134) circle (0.8pt) node[left] {\footnotesize $\frac1{q_c}$};
\draw (1.856, 0.134) circle (0.8pt);
\draw (1.5,1.5) node [blue]{ \scriptsize$\frac2q + \frac{p_c}q \big ( 1 - \frac2{q_c}) =1$};
\fill (0,2) circle (0.8pt) node[left] {$\frac12$};
\end{tikzpicture}
}
\quad
\subfloat[][Case $n=3$, $\nu=\frac12$.]{
\begin{tikzpicture}[scale=1.5, >=latex]
\draw[->] (-.25,0) -- (3,0) coordinate (x axis); \node[below] at (3,0) {$\frac1p$};
\draw[->] (0,-.25) -- (0,3) coordinate (y axis); \node[left] at (0,3) {$\frac1q$};
\draw[thick, blue] (2,0.179) -- (2,1);
\draw[thick, densely dashed, blue] (0,0.179) -- (2, 0.179);
\draw[thick, blue] (0,2) -- (0,0.179);
\draw[thick, densely dashed, blue] (0,2) -- (2,1);
\draw[dotted] (2,0) -- (2,1);
\fill[gray!20] (0,0.179) -- (2,0.179) -- (2,1) -- (0,2) -- cycle;
\draw (0,0) circle (0.8pt) node[below left] {\footnotesize $0$};
\draw (2,0) circle (0.8pt) node[below] {\footnotesize $\frac12$};
\draw (0,0.179) circle (0.8pt) node[left] {\footnotesize $\frac1{q_c}$};
\draw (2, 0.179) circle (0.8pt);
\draw (1.25,1.75) node [blue] {\scriptsize $\frac2q + \frac1p =1$};
\fill (0,1) circle (0.5pt) node [left] {\footnotesize $\frac14$};
\draw (2,1) circle (0.8pt);
\fill (0,2) circle (0.8pt) node[left] {$\frac12$};
\end{tikzpicture}
}
\caption{Generalized Strichartz estimates}
\label{fig:gSe}
\end{figure}

Our first main result concerns Strichartz estimates with an additional angular regularity for solutions of \eqref{eq:DC}, as explained above. As we will see we will have some technical conditions that will force us to slightly restrict the set of admissible $\nu$. Moreover, respect to the free case, we obtain a smaller set of admissibility for the indexes $p,q$ (as shown in \Cref{fig:gSe}). However we can recover the classical range \eqref{adm_cond_freecase} in two cases: if $\nu =0$\footnote{except for the segment corresponding to $q = + \infty$.}or, for all $\nu \in \big [ -\frac{n-1}2, \frac{n-1}2 \big]$, if the initial datum is Dirac-non radial.\footnote{The reason behind this additional restriction on the indexes will appear clearly in the proofs of theorems \ref{thm:Stri_2D} and \ref{thm:Stri_3D}; it is due to the behavior near the origin of the ``first'' generalized eigenfunctions;} We separate the cases $n=2$ and $n=3$ in order to lighten the notations. We have the following

\begin{theorem}[Strichartz estimates 2D]
\label{thm:Stri_2D}
Let $\lvert \nu \rvert < \frac{\sqrt2}3$ and $(p,q)$ such that
\begin{equation}
\label{adm_cond_2d}
p_c < p \le + \infty, \quad 2 \le q < q_c, \quad \frac2q + \frac{p_c}p \bigg( 1 - \frac2{q_c} \bigg) < 1 \text{ or } (p,q) = (\infty,2),
\end{equation}
where $q_c = \frac2{\frac12 - \gamma_{\frac12}}$, $p_c = \frac{\gamma_{\frac12} + \frac12}{\gamma_{\frac12}}$ and $\gamma_{\frac12} = \sqrt{ \frac14 - \nu^2}$. Then, there exists a constant $C > 0$ such that for any $u_0 \in \dot H^s_{\D_{\nu}}(\R^2;\C^2)$ the following Strichartz estimates hold
\begin{equation}
\label{eq:Stri_2D}
\lVert e^{it \D_{\nu}} u_0 \rVert_{L^p_t L^q_{r dr} L^2_{\theta}} \le C \lVert u_0 \rVert_{\dot H_{\D_{\nu}}^s}
\end{equation}
provided $s = 1- \frac1p - \frac2q$.\\
Moreover, if $u_0$ is Dirac-non radial, then the Strichartz estimates hold for all $\lvert \nu \rvert \le \frac12$ and $(p,q)$ satisfying the admissibility condition \eqref{adm_cond_2d} with $(p_c,q_c) = (2, +\infty)$, $q =q_c$ included.
\end{theorem}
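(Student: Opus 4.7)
The proof adapts the now standard paradigm for dispersive estimates in presence of scale-invariant potentials: perform a partial wave decomposition, reduce the evolution on each subspace to an oscillatory integral on the half-line using the generalized eigenfunctions, and then run a $TT^*$ argument. The asymptotic control of the generalized eigenfunctions of $\D_{\nu}$ that we need is precisely what was established in \cite{CS16, CSZ23}.

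First I would invoke the partial wave decomposition $L^2(\R^2;\C^2) = \bigoplus_{k} \frk{h}_k$, where $k$ runs over half-integers, on each $\frk{h}_k$ of which $\D_{\nu}$ is unitarily equivalent to a radial reduction $d_{\nu,k}$ with explicit generalized eigenfunctions $\phi_\lambda^{(k)}(r)$; near the origin these behave like $r^{\gamma_k - 1/2}$ with $\gamma_k = \sqrt{k^2 - \nu^2}$, and at infinity like an oscillating Bessel profile. Since $\lvert \D_{\nu} \rvert^s$ is diagonal in this decomposition we have $\lVert u_0 \rVert_{\dot H^s_{\D_{\nu}}}^2 = \sum_k \lVert u_0^{(k)} \rVert_{\dot H^s_{\D_{\nu}}}^2$, and since, by Minkowski for $p,q \geq 2$ and the $L^2_\theta$-orthonormality of the angular modes,
\[
\lVert e^{it\D_{\nu}} u_0 \rVert_{L^p_t L^q_{r dr} L^2_\theta}^2 \leq \sum_k \lVert e^{itd_{\nu,k}} u_0^{(k)} \rVert_{L^p_t L^q_{r dr}}^2,
\]
the problem reduces to proving the Strichartz bound on each $\frk{h}_k$ with a constant bounded uniformly in $k$.

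On a fixed $\frk{h}_k$, the distorted Fourier transform rewrites $e^{itd_{\nu,k}} f$ as an oscillatory integral against $e^{it\lambda}\phi_\lambda^{(k)}$, and the $TT^*$ reformulation reduces the desired inequality to weighted $L^{q'}_{r dr} \to L^q_{r dr}$ bounds for the kernel $K_k(t;r,s) = \int e^{it\lambda} \phi_\lambda^{(k)}(r) \overline{\phi_\lambda^{(k)}(s)}\,d\lambda$, integrated in time. The oscillating tail of $\phi_\lambda^{(k)}$ produces, via stationary phase, the standard $1$D wave dispersive decay, while the singular factor $r^{\gamma_k - 1/2}$ near zero introduces Hardy-type weights whose local $L^{q/2}_{r dr}$ integrability forces $q < 2/(1/2 - \gamma_k)$ and, via interpolation with the energy endpoint $(\infty,2)$, the line $\tfrac{2}{q} + \tfrac{p_c}{p}\big(1 - \tfrac{2}{q_c}\big) < 1$. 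Both constraints are tightest precisely on the first partial wave $k = \pm 1/2$, where $\gamma_k = \gamma_{1/2}$ is smallest; this wave therefore dictates the critical indices $(p_c,q_c)$, and the assumption $|\nu| < \tfrac{\sqrt 2}{3}$ is the technical threshold ensuring that the admissibility region depicted in Figure~\ref{fig:gSe}(a) is non-degenerate.

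For every $|k| \geq 3/2$ one has $\gamma_k \geq \sqrt{9/4 - \nu^2} > 1$, so the eigenfunctions $\phi_\lambda^{(k)}$ are bounded at the origin, the kernel estimates yield the full standard admissibility \eqref{adm_cond_freecase} (endpoint $q = q_c = +\infty$ included), and the constants can be taken uniform in $k$ and in $\nu \in [-\tfrac12,\tfrac12]$. Hence for Dirac-non radial $u_0$, whose projections on $\frk{h}_{\pm 1/2}$ vanish, summing the per-wave estimates through the inequality above gives the improved range stated in the theorem. The main obstacle I anticipate is proving the kernel bound uniformly in $k$: one must carefully split $K_k$ across the transitional scale $r \sim \lvert\lambda\rvert^{-1}$ where the asymptotics of $\phi_\lambda^{(k)}$ switch regimes, match the singular power at the origin with the oscillating tail, and identify the Sobolev norm $\dot H^s_{\D_{\nu}}$ via the distorted Fourier transform so that the weighted Hardy-type condition translates into a genuine Strichartz bound.
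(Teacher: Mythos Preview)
Your outline correctly identifies the structural backbone---partial wave decomposition, the role of $\gamma_k=\sqrt{k^2-\nu^2}$, why the first wave $|k|=1/2$ dictates $(p_c,q_c)$, and why the Dirac-non radial case lifts the restriction---but the analytic route you propose is genuinely different from the paper's, and the hardest step in your sketch is precisely the one the paper \emph{avoids}.

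You propose a $TT^*$ argument and a stationary-phase analysis of the propagator kernel $K_k(t;r,s)=\int e^{it\lambda}\phi_\lambda^{(k)}(r)\overline{\phi_\lambda^{(k)}(s)}\,d\lambda$. The paper never forms this kernel and does no stationary phase. Instead it works directly with the representation $e^{itd_{\nu,k}}f=\mathcal P_k^{-1}[e^{it\rho\sigma_3}\mathcal P_k f]$: for frequency-localised data ($\operatorname{supp}\mathcal P_k f\subset[\tfrac12,1]$) it estimates $\|\cdot\|_{L^p_tL^q_{rdr}([R,2R])}$ by Minkowski + Hausdorff--Young in $t$ (turning $e^{it\rho}$ into an $L^{p'}_\rho$ norm) combined with Sobolev embeddings $\dot H^{1/2-1/q}\hookrightarrow L^q$ and $W^{1,p}\hookrightarrow L^\infty$ in $r$, feeding in only the \emph{pointwise} bounds on $\psi_k$ and $\psi_k'$ from \cite{CSZ23}. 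This yields a localised estimate with explicit powers of $R$; a double dyadic decomposition in radius $R$ and frequency $N$ and a Schur test then remove the localisation, and a final interpolation with the energy bound $(\infty,2)$ gives the full region.

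What each approach buys: your $TT^*$/kernel route is classical and in principle works, but the ``main obstacle'' you flag---uniform-in-$k$ kernel bounds across the transition scale---would essentially require redoing the eigenfunction asymptotics in a form tailored to oscillatory integrals, which is more work than the paper's method. The paper's route sidesteps this entirely: it never needs decay of $K_k$ in $t$, only $L^q$ bounds of $\psi_k$ on dyadic shells, which follow immediately from the pointwise estimates already available. Your interpolation step and your reading of the admissibility constraints are, however, the same as the paper's.
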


\begin{theorem}[Strichartz estimates 3D] 
\label{thm:Stri_3D}
Let $\lvert \nu \rvert < \frac{\sqrt{15}}4$ and  $(p,q)$ such that
\begin{equation}
\label{adm_cond_3d}
2 \le p \le + \infty, \quad 2 \le q < q_c, \quad \frac2q + \frac1p < 1 \text{ or } (p,q) = (\infty,2),
\end{equation}
where $q_c = \frac3{1 - \sqrt{1 - \nu^2}}$. Then, there exists a constant $C > 0$ such that for any $u_0 \in \dot H^s_{\D_{\nu}}(\R^3;\C^4)$, the following Strichartz estimates hold
\begin{equation}
\label{eq:Stri_3D}
\lVert e^{it \D_{\nu}} u_0 \rVert_{L^p_t L^q_{r^2 dr} L^2_{\theta}} \le C \lVert u_0 \rVert_{\dot H_{\D_\nu}^s}
\end{equation}
provided $s= \frac32 - \frac1p - \frac3q$.\\
Moreover, if $u_0$ is Dirac-non radial, then the Strichartz estimates hold for all $\lvert \nu \rvert \le 1$ and $(p,q)$ satisfying the admissibility condition \eqref{adm_cond_3d} with $q =q_c=+\infty$, included. 
\end{theorem}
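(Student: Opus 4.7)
The plan is to exploit the partial wave decomposition of $L^2(\mathbb{R}^3;\mathbb{C}^4)$ adapted to $\mathcal{D}_\nu$, reducing the global estimate to a family of weighted half-line problems indexed by the angular quantum number $\kappa \in \mathbb{Z}\setminus\{0\}$, and then to combine them with the asymptotic estimates on the generalized eigenfunctions of the radial Dirac-Coulomb operators from \cite{CSZ23}. Since $\mathcal{D}_\nu$ commutes with the total angular momentum, we can write $u_0 = \sum_\kappa u_0^{(\kappa)}$ along the pairwise orthogonal partial-wave subspaces (precisely defined in the forthcoming Subsection \ref{part_wave_decomp}). On each such subspace, $e^{it\mathcal{D}_\nu}$ reduces to a half-line propagator $e^{itH_\kappa}$ acting on the radial profile, and the orthonormality of the spinor spherical harmonics used to realize this decomposition turns the $L^2_\theta$ norm on the left-hand side of \eqref{eq:Stri_3D} into an $\ell^2$ sum over $\kappa$ of radial $L^p_tL^q_{r^2 dr}$ norms.

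For each $\kappa$, conjugation by the generalized Fourier transform associated with $H_\kappa$ diagonalises it into multiplication by $\lambda$ on the spectrum. The near-origin estimate $|\psi_\kappa(r,\lambda)|\lesssim (\lambda r)^{\gamma_\kappa}$, with $\gamma_\kappa=\sqrt{\kappa^2-\nu^2}$, together with the oscillatory asymptotics at infinity coming from \cite{CSZ23}, feed into a $TT^*$ argument in the spirit of \cite{BPST-Z03} and \cite{MZZ13} to produce a weighted dispersive kernel bound for $e^{itH_\kappa}$. A Keel--Tao type interpolation against $L^2$ conservation then yields the per-wave Strichartz inequality with scaling $s=\tfrac32-\tfrac1p-\tfrac3q$, while the endpoint $(p,q)=(\infty,2)$ follows directly from unitarity of $e^{it\mathcal{D}_\nu}$ on $L^2$. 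The technical condition $|\nu|<\sqrt{15}/4$ is used to control the constants and the equivalence $\dot H^s_{\mathcal{D}_\nu}\sim \dot H^s$ uniformly in $\kappa$ in the range of $s$ covered by \eqref{adm_cond_3d}, so that the $\ell^2$ summation over partial waves remains lossless.

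The role of the critical exponent $q_c=\tfrac{3}{1-\sqrt{1-\nu^2}}$ is played exclusively by the first partial wave $|\kappa|=1$, where $\gamma_1=\sqrt{1-\nu^2}$ is the smallest among all $\gamma_\kappa$. The weight $r^{\gamma_1}$ appearing in the eigenfunction near the origin limits the decay rate of the corresponding dispersive kernel, which in turn caps the range of Lorentz exponents on which the Keel--Tao interpolation closes; the explicit balance on this wave produces precisely the threshold $q<3/(1-\gamma_1)=q_c$. Past this value one can construct wave packets concentrated in the $|\kappa|=1$ sector that saturate, and eventually violate, \eqref{eq:Stri_3D}.

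In the Dirac-non radial case, the $|\kappa|=1$ component of $u_0$ vanishes by definition, so the minimal exponent in the sum is raised to $\gamma_2=\sqrt{4-\nu^2}$; since $\gamma_2>1$ for every $|\nu|\le 1$, the obstruction at the origin disappears and the admissible range widens to include the endpoint $q=q_c=+\infty$ as well as the whole range $|\nu|\le 1$. I expect the main technical obstacle to be extracting from \cite{CSZ23} uniform-in-$\kappa$ control of the dispersive kernels, both in the decay rate and in the constants, so as to perform the $\ell^2$ summation over partial waves without loss; a secondary difficulty is the endpoint $q=q_c=+\infty$ in the Dirac-non radial case, which will require a genuinely bilinear Keel--Tao argument rather than a direct interpolation.
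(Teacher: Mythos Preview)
Your high-level scaffolding matches the paper: partial wave decomposition, the relativistic Hankel transform diagonalising each radial block, the eigenfunction asymptotics from \cite{CSZ23}, the identification of the $|\kappa|=1$ sector as responsible for the threshold $q_c=3/(1-\gamma_1)$, and the removal of this obstruction in the Dirac-non radial case. These points are all correct and track the paper's argument.

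Where you diverge is in the core analytic step. The paper does \emph{not} prove a dispersive kernel bound nor run a Keel--Tao/$TT^*$ scheme. Instead, after writing the solution via the inverse Hankel transform, it estimates $\|e^{itH_\kappa}g_\kappa\|_{L^p_tL^q_{r^2dr}([R,2R])}$ directly for frequency-localised $g_\kappa$ (supported in $[\tfrac12,1]$) using Hausdorff--Young in $t$, $L^q([R,2R])$ bounds on $\psi_\kappa$ and $\psi_\kappa'$ from the pointwise asymptotics, and Sobolev embeddings $H^{1/2-1/q}\hookrightarrow L^q$, $W^{1,p}\hookrightarrow L^\infty$; this produces powers of $R$ on each dyadic shell. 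A double dyadic decomposition in both frequency $N$ and radius $R$ is then summed via Schur's test, and the final range is obtained by interpolating the resulting $L^p_tL^q$ estimate (valid for $q>\tfrac{10}{3}$) against mass conservation $L^\infty_tL^2_x$. Your $TT^*$/Keel--Tao route is not obviously wrong, but for a scaling-critical long-range potential like Coulomb a genuine fixed-time dispersive bound is delicate and is not what \cite{BPST-Z03,MZZ13} actually do either; those references also work by direct norm estimation, so invoking them in support of a dispersive-kernel approach is misleading.

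One concrete error: you attribute the threshold $|\nu|<\sqrt{15}/4$ to ``constants and the equivalence $\dot H^s_{\mathcal D_\nu}\sim\dot H^s$''. It has nothing to do with norm equivalence (the right-hand side of the theorem is $\dot H^s_{\mathcal D_\nu}$ throughout). In the paper the condition arises in the final interpolation step: the direct estimate (Lemma~\ref{Strlocdata3d}) requires $q>\tfrac{10}{3}$, and one needs a point with $q\le 4$ inside the admissible strip $q<q_c$ to interpolate down to $(p,q)=(\infty,2)$; the inequality $4<q_c=3/(1-\gamma_1)$ is exactly $|\nu|<\sqrt{15}/4$.
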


We describe in \Cref{fig:gSe} the admissible range (region in grey) in the cases $n=2$, $\nu = \frac14$ and $n=3$, $\nu=\frac12$. Notice that if $\nu=0$ we can extend the region up to the segments $\overline{0P}$ where $P=\big(\frac12, 0\big)$, reaching the classical range.

\begin{remark}
Observe that in the 2D case the norm in the R.\ H.\ S.\ is the one induced by the Dirac-Coulomb operator and it is not, in general, equivalent to the standard Sobolev norm. A more detailed discussion is to be found in Subsection \ref{sect:equiv_sob_norm}.
\end{remark}

\begin{remark}
We notice that from the Strichartz estimates \eqref{eq:Stri_2D}, \eqref{eq:Stri_3D} it is possibile to deduce standard Strichartz estimates with an additional loss of angular derivatives on $u_0$. The idea is to combine the estimates above with Sobolev embeddings on the sphere of dimension $n-1$. We denote with $\Lambda_\theta^s$ the angular derivative operator, which is defined in terms of the Laplace-Beltrami operator on $\mathbb S^{n-1}$
\[
\Lambda_\theta^s = (1- \Delta_{\mathbb S^{n-1}} )^{\frac s2}.
\]
This operator does not commute with the Dirac operator $\D_n$. However, it has been observed in \cite{CD12} (see formula (2.45))\footnote{In \cite{CD12} the authors work in the 3D setting. However, one can use the same definition to build a modified operator, with the same properties, also in the 2D setting.}  that it is possibile to define a modified operator $\tilde \Lambda _\theta^s$ commuting with $\D_n$ and such that
\[
\big \lVert \tilde \Lambda_\theta^s f \big \rVert_{L^2_\theta} \simeq \big \lVert \Lambda^s_\theta f \big \rVert_{L^2_\theta}.
\]
Then, by Sobolev embeddings $H^\sigma_\theta (\mathbb S^{n-1}) \hookrightarrow L^q_\theta(\mathbb S^{n-1})$, we get 
\[
\lVert e^{it \D_{\nu}} u_0 \rVert_{L^p_t L^q_x} \lesssim \big \lVert \Lambda_\theta ^\sigma e^{it \D} u_0 \big \rVert_{L^p_t L^q_{r^{n-1} dr}} \simeq \big \lVert e^{it \D} \tilde \Lambda ^\sigma_\theta u_0 \big \rVert_{L^p_t L^q_{r^{n-1} dr}} \lesssim \big \lVert  \tilde \Lambda ^\sigma_\theta u_0 \rVert_{\dot H_s}
\]
where $\sigma = \frac{n-1}2 - \frac{n-1}q$, $ 0<s < \frac{n-1}2$ and $\nu$, $(p,q)$  as in Theorems \ref{thm:Stri_2D}, \ref{thm:Stri_3D}.
\end{remark}

The strategy developed in order to prove the above results is a refinement of the one in \cite{CSZ23}; let us briefly describe it. Firstly, we exploit the radial structure of the Dirac-Coulomb operator, using the partial wave decomposition to reduce the Dirac-Coulomb operator to a differential operator acting only on the radial component; then the ``relativistic'' Hankel transform (see Section \ref{section:rel_Hank_trans} for the definition) allows us to reduce the radial operator to a multiplicative and diagonal one. This gives us an explicit representation for the solutions of \eqref{eq:DC} (see \eqref{formula:decomp_sol}). Then, we use the pointwise estimates of the generalized eigenfunctions to estimate the $L^p_t (\R)L^q_r([R,2R])$-norm of the solution, $R > 0$, on a fixed angular level for frequency localized initial data. Finally we conclude using the orthogonality of the partial waves, scaling argument and dyadic decompositions.\\

With the same tools, we can also show the validity of a new smoothing estimate for solutions of \eqref{eq:DC}.

\begin{theorem}
\label{thm:loc_smo}
Let $u_0 \in L^2 (\R^n; \C^N)$. Then the following estimate holds 
\begin{equation}
\label{camp-morrey}
\sup_{R > 0} R^{- \frac12} \lVert e^{it \D_{\nu}} u_0 \rVert_{L^2_t L^2_{\lvert x \rvert \le R}} \lesssim \lVert u_0 \rVert_{L^2_x}.
\end{equation}
\end{theorem}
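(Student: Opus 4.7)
The plan is to reduce \eqref{camp-morrey} to a single one-dimensional kernel estimate, using the same three-step machinery that underlies Theorems~\ref{thm:Stri_2D} and~\ref{thm:Stri_3D}: scaling, partial wave decomposition, and the relativistic Hankel transform. First I would exploit the scale invariance of $\D_\nu$ under $u\mapsto u(\lambda^{-1}t,\lambda^{-1}x)$: applying the fixed-scale inequality $\|e^{it\D_\nu}u_0\|_{L^2_tL^2_{|x|\le 1}}\lesssim\|u_0\|_{L^2}$ to $v_0(x)=u_0(\lambda x)$ and changing variables in $(t,x)$ produces the $R$-dependent version of \eqref{camp-morrey} with the correct $R^{1/2}$-factor, so it is enough to treat $R=1$.

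Next I would decompose $u_0=\sum_j\Pi_j u_0$ using the partial waves of Section~\ref{part_wave_decomp}. Since the $H_j$ are angularly orthogonal and the set $\{|x|\le 1\}$ is radial, the orthogonality survives the restriction to this set, and it suffices to prove the estimate on each $H_j$ with a constant uniform in $j$. On $H_j$ the Dirac--Coulomb operator reduces to a radial operator diagonalized by the relativistic Hankel transform $\F_j$ of Section~\ref{section:rel_Hank_trans} via the generalized eigenfunctions $\phi_j(r,E)$, and the solution can be written as a Fourier-type integral over $E\in\R$ with amplitude $\phi_j(r,E)\,(\F_j U_{0,j})(E)$. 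Plancherel in $t$, combined with integration in $r^{n-1}\,dr$ on $[0,1]$ and the isometry property of $\F_j$, reduces the whole problem to the uniform spectral-density bound
\[
\sup_j\sup_{E\in\R}\int_0^1|\phi_j(r,E)|^2\,r^{n-1}\,dr\ \lesssim\ 1.
\]

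The hard part will be verifying this last inequality. Splitting the $r$-integral at the natural scale $r=1/|E|$, on the inner region I would invoke the small-argument asymptotic $|\phi_j(r,E)|\sim c\,|E|^{\gamma_j+1/2}r^{\gamma_j-(n-1)/2}$, with $\gamma_j=\sqrt{k_j^2-\nu^2}$, taken from \cite{CSZ23} in three dimensions and from its two-dimensional counterpart developed earlier in this paper; the resulting contribution is of order $(2\gamma_j+1)^{-1}$, uniformly bounded in $j$ because the self-adjointness constraint on $\nu$ keeps $\gamma_j$ away from the critical value $-1/2$. On the outer region the oscillatory large-argument asymptotic gives $|\phi_j(r,E)|^2 r^{n-1}\lesssim 1$, whose integral is trivially $\lesssim 1$. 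Summing over $j$ and undoing the scaling then yields \eqref{camp-morrey}. The delicate point is exactly this double uniformity: the near-origin exponent $\gamma_j$ varies with the partial wave and $E$ ranges over all of $\R$, so the eigenfunction bounds must be applied in a form making the dependence of every constant on $(j,E)$ explicit.
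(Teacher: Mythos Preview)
Your overall architecture---partial wave decomposition, relativistic Hankel transform, Plancherel in $t$, and reduction to a uniform eigenfunction bound---is exactly the paper's route. The preliminary scaling to $R=1$ is harmless but redundant: since the eigenfunctions depend on $r$ and $E$ only through the product $rE$ (Remark~\ref{rmk:gen_eig_neg}), your supremum over $E\in\R$ at $R=1$ becomes, after the substitution $\rho=rE$, the same quantity the paper estimates directly, namely $\sup_{R>0}R^{-1}\int_0^R|\psi_k(\rho)|^2\rho^{n-1}\,d\rho$.

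The genuine gap is in your verification of that bound. Your two-zone split at $r=1/|E|$ (``small argument'' vs.\ ``oscillatory large argument'') omits the transition region $\rho=rE\sim|k|$, and the pointwise claim $|\phi_j(r,E)|^2 r^{n-1}\lesssim 1$ on the outer zone is \emph{false} there uniformly in $j$. In the paper's normalization that claim reads $|\psi_k(\rho)|^2\rho^{n-1}\lesssim 1$, and the middle line of \eqref{est_eig} gives, for $n=3$ at $\rho=|k|$, a value of order $|k|^{-5/3}\cdot|k|^{2}=|k|^{1/3}$. The far-field decay $|\psi_k(\rho)|\lesssim\rho^{-(n-1)/2}$ only kicks in once $\rho\ge 2|k|$, not once $\rho\ge 1$. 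The paper handles this third region precisely by invoking the Airy-type bound of Theorem~\ref{thm:point_est} on $[|k|/2,2|k|]$ and computing
\[
R^{-1}\int_{|k|/2}^{2|k|}|\psi_k(r)|^2 r^{n-1}\,dr\ \lesssim\ |k|^{-1/2}\int_{|k|/2}^{2|k|}\big||k|-r\big|^{-1/2}\,dr\ \lesssim\ 1
\]
(case~(ii) in the Lemma inside the proof of Theorem~\ref{thm:loc_smo}). Add this region to your argument and it closes, coinciding with the paper's.
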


\begin{remark}
We observe that the estimate in the Morrey spaces $L^{1,2}(\R^n)$ can be viewed as a limiting case of the local smoothing estimates \eqref{eq:local_smo_CS} as $\alpha \to \frac12$. We mention also that the same estimate has been proved in \cite{BDF11} for solutions of small magnetic Dirac equations with completely different techniques and in \cite{CF17} for the Dirac equation in the Aharonov-Bohm magnetic field. 
\end{remark}

\medskip

As mentioned above, Strichartz estimates can be used in the study of the dynamics of dispersive nonlinear systems. Therefore, as an application of Theorem \ref{thm:Stri_3D} for Dirac-radial functions, we discuss a local well-posedness result for the following 3D nonlinear system  
\begin{equation}
\label{NL:conv}
\begin{cases}
i \partial_t u + \D_{\nu} u = N(u),\\
u(x,0) = u_0(x), 
\end{cases}
\end{equation}
where $\lvert \nu \rvert < \frac{\sqrt 3}2$, $u_0$ is Dirac-radial and the nonlinearity is of the form
\begin{equation}
\label{shape_nonlin}
N(u) = \big ( \omega \ast \langle \beta u,u \rangle\big ) u, \quad \beta = \begin{pmatrix} \mathbbm 1_2 & 0_2\\0_2&- \mathbbm 1_2\end{pmatrix}
\end{equation}
with $\omega$ radially symmetric, i.e., $\omega(x) = \omega(\lvert x \rvert)$, $\omega \in L^p (\R^3)$ for some $p > 1$.

 We have the following
\begin{theorem}
\label{thm:nla1}
Let $\omega$ be a radial function, $\omega \in L^p(\R^3)$, $p \in \big [ \tfrac32, + \infty \big ]$. Let also $u_0 \in \dot H^s(\R^3)$, $s = \frac3{2p}$, be Dirac-radial. Then there exists a positive time $T=T(\lVert u_0 \rVert_{\dot H^s}, \lVert \omega \rVert_{L^p})$ such that \eqref{NL:conv} has a unique solution $u \in C([0,T]; \dot H^s(\R^3))$.
\end{theorem}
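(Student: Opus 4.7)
My plan is to run a Banach contraction argument on the Duhamel reformulation
\begin{equation*}
\Phi(u)(t) = e^{it\D_\nu} u_0 - i\int_0^t e^{i(t-\tau)\D_\nu}\,N(u(\tau))\, d\tau
\end{equation*}
inside a Strichartz-type resolution space. The first preliminary step is to check that the Dirac-radial subspace is preserved both by the linear propagator (which commutes with the partial-wave decomposition) and by the nonlinear map $N$: since $\omega$ is radial and the quadratic form $\langle \beta u,u\rangle$ evaluated on a Dirac-radial spinor produces a radial scalar function (by rotational invariance applied to the first partial-wave subspace), $\omega * \langle \beta u,u\rangle$ is radial and its product with the Dirac-radial $u$ remains Dirac-radial. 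Because on Dirac-radial spinors the angular $L^q_\theta$ norms are pairwise equivalent, the mixed norm appearing in \eqref{eq:Stri_3D} collapses to the ordinary $L^p_t L^q_x$ norm on this subspace.

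Next, I choose a wave-admissible pair $(p_0, q_0)$ in the sense of \eqref{adm_cond_3d} satisfying the scaling relation $3/2 - 3/q_0 - 1/p_0 = s = 3/(2p)$, and introduce the resolution space
\begin{equation*}
X_T = \big\{ u \in C([0,T]; \dot H^s(\R^3)) \cap L^{p_0}([0,T]; L^{q_0}_x) : u \text{ Dirac-radial}\big\},
\end{equation*}
equipped with the norm $\|u\|_{X_T} = \|u\|_{L^\infty_T \dot H^s} + \|u\|_{L^{p_0}_T L^{q_0}_x}$. Applying Theorem \ref{thm:Stri_3D} to Dirac-radial data yields the linear bound $\|e^{it\D_\nu}u_0\|_{L^{p_0}_T L^{q_0}_x} \lesssim \|u_0\|_{\dot H^s_{\D_\nu}} \simeq \|u_0\|_{\dot H^s}$, the norm equivalence coming from Subsection \ref{sect:equiv_sob_norm}. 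The contraction is then reduced to an estimate of $N(u)$ in a dual Strichartz space. Combining Young's convolution inequality
\[
\|\omega * \langle \beta u,u\rangle\|_{L^a_x} \lesssim \|\omega\|_{L^p}\|u\|_{L^{2r}_x}^2, \qquad \tfrac 1a = \tfrac 1p + \tfrac 1r - 1,
\]
with a Hölder in space inserting the third factor $u$, the Sobolev embedding $\dot H^s \hookrightarrow L^{6/(3-2s)}(\R^3)$, and a Hölder in time, I aim for an estimate of the form
\[
\|N(u)\|_{L^{\tilde p'}_T L^{\tilde q'}_x} \lesssim T^\gamma \|\omega\|_{L^p} \|u\|_{X_T}^3
\]
for a suitable dual admissible pair $(\tilde p, \tilde q)$, with $\gamma > 0$ reflecting the fact that $s = 3/(2p)$ is strictly above the Hartree-scaling critical index $s_c = 3/(2p) - 1/2$. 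The trilinearity of $N$ yields the analogous Lipschitz estimate on $N(u) - N(v)$, and for $T$ small depending on $\|u_0\|_{\dot H^s}$ and $\|\omega\|_{L^p}$, Banach's fixed point theorem delivers the unique local solution in a ball of $X_T$.

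The main technical obstacle is propagating the fractional regularity $s > 0$ through the nonlinearity, namely controlling a norm of the form $\||\D_\nu|^s N(u)\|_{\text{dual Strichartz}}$. Rather than developing a full fractional Leibniz rule for the Dirac-Coulomb operator, I would place all $s$ derivatives on a single factor of the cubic expression, use the equivalence $\||\D_\nu|^s v\|_{L^q_x} \simeq \||D|^s v\|_{L^q_x}$ on the Dirac-radial subspace (again via Subsection \ref{sect:equiv_sob_norm}) to reduce to the standard flat fractional product estimate for $|D|^s$, and bound the remaining two factors via Sobolev embedding together with the Strichartz bound. The combinatorial core is then to match these spatial and temporal exponents uniformly in $p \in [3/2, +\infty]$: the endpoint $p = 3/2$ (where $s=1$ sits close to the boundary $q_c$ of the admissible range) and $p = +\infty$ (where $s = 0$ and the argument collapses to a standard $L^2$-subcritical contraction with $N$ acting as a bounded perturbation on the unitary Dirac flow) are the tight cases but remain inside the admissibility region granted by Theorem \ref{thm:Stri_3D}.
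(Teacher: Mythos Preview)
Your approach diverges from the paper's in a significant way and, as written, leans on tools that are not available. The paper's proof of Theorem~\ref{thm:nla1} uses \emph{no Strichartz estimates at all}: the contraction closes purely in $L^\infty_T \dot H^s$. Since $s = 3/(2p) \in [0,1]$, Sobolev embedding gives $\dot H^s \hookrightarrow L^{2p'}$, and Lemma~\ref{lemma:Nu_Hs} (a standard Kato--Ponce product estimate for $|D|^s$) combined with Young's inequality yields directly
\[
\|N(u)\|_{\dot H^s} \lesssim \|\omega\|_{L^p}\,\|u\|_{\dot H^s}\,\|u\|_{L^{2p'}}^2 \lesssim \|\omega\|_{L^p}\,\|u\|_{\dot H^s}^3.
\]
Unitarity of $e^{it\D_\nu}$ on $\dot H^s_{\D_\nu}$ together with the $L^2$-based equivalence of Proposition~\ref{prop:equi_norm_3d} then gives $\|\Phi(u)\|_{L^\infty_T \dot H^s} \lesssim \|u_0\|_{\dot H^s} + T\|\omega\|_{L^p}\|u\|_{L^\infty_T \dot H^s}^3$, and the difference estimate follows via Lemma~\ref{lemma:H_s-difference}. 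The paper explicitly remarks that Strichartz only enters for Theorem~\ref{thm:nla2}, where one wants less integrability on $\omega$.

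Your scheme instead introduces an auxiliary Strichartz component and invokes two ingredients the paper does not supply. First, you appeal to an inhomogeneous (dual) Strichartz estimate for the Dirac--Coulomb flow, but only the homogeneous bounds of Theorem~\ref{thm:Stri_3D} are proved; no $TT^*$ or Christ--Kiselev step is carried out. Second, you claim $\||\D_\nu|^s v\|_{L^q} \simeq \||D|^s v\|_{L^q}$ for $q\neq 2$, whereas Subsection~\ref{sect:equiv_sob_norm} establishes only the $L^2$ equivalence. Both gaps can be sidestepped by taking the dual pair $(\tilde p,\tilde q)=(\infty,2)$ and replacing dual Strichartz by Minkowski plus the homogeneous estimate applied at each $\tau$---but then your argument collapses to the paper's energy method with an unnecessary Strichartz norm grafted on. In short: for this theorem the Strichartz machinery is not needed, and the tools you invoke to run it are not justified here.
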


\begin{theorem}
\label{thm:nla2}
Let $\omega$ be a radial function, $\omega \in L^p (\R^3)$, $p \in \big ( \frac3{1 + 2 \sqrt{1 - \nu^2}}, + \infty\big )$. Let also $u_0 \in \dot H^s (\R^3)$, $ s = \frac1p$, be Dirac-radial. Then there exists a positive time $T=T(\lVert u_0 \rVert_{\dot H^s}, \lVert \omega \rVert_{L^p})$ such that \eqref{NL:conv} has a unique solution $u \in C([0,T]; \dot H^s(\R^3)) \cap L^{2p}([0,T]; L^{2p'} (\R^3))$, where $p'$ is the conjugate exponent of $p$.
\end{theorem}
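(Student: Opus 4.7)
The strategy is a Banach fixed-point argument for the Duhamel operator
\[
\Phi(u)(t) = e^{it\D_\nu} u_0 - i \int_0^t e^{i(t-\tau)\D_\nu} N(u(\tau))\, d\tau,
\]
on a ball of the complete metric space
\[
X_T := \{u \in L^\infty([0,T]; \dot H^s(\R^3)) \cap L^{2p}([0,T]; L^{2p'}(\R^3)) : u(t,\cdot) \text{ is Dirac-radial}\},
\]
equipped with $\|u\|_{X_T} := \|u\|_{L^\infty_T \dot H^s_x} + \|u\|_{L^{2p}_T L^{2p'}_x}$. First I would verify that the pair $(P,Q) = (2p, 2p')$ is admissible in Theorem \ref{thm:Stri_3D}: the scaling $\tfrac32 - \tfrac1P - \tfrac3Q = \tfrac1p$ matches $s$; the bound $\tfrac2Q + \tfrac1P = 1 - \tfrac{1}{2p} < 1$ is automatic; and $Q < q_c$ is precisely the hypothesis $p > \tfrac{3}{1 + 2\sqrt{1-\nu^2}}$. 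Since $|\nu| < \tfrac{\sqrt 3}{2} < \tfrac{\sqrt{15}}{4}$, Theorem \ref{thm:Stri_3D} applies.

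Since $\D_\nu$ commutes with the partial wave decomposition, $e^{it\D_\nu}$ preserves Dirac-radiality. For Dirac-radial $u$, the rotational invariance of the first partial wave forces $\langle \beta u, u\rangle$ to be a radial scalar, and since $\omega$ is radial, the potential $V_u := \omega * \langle \beta u, u\rangle$ is a radial scalar too; multiplication by a radial scalar is diagonal on the partial wave decomposition, so $N(u) = V_u\, u$ stays Dirac-radial. Thus $X_T$ is stable under $\Phi$. The linear part satisfies $\|e^{it\D_\nu}u_0\|_{X_T} \lesssim \|u_0\|_{\dot H^s}$ by Theorem \ref{thm:Stri_3D} combined with the equivalence $\dot H^s \simeq \dot H^s_{\D_\nu}$ on the Dirac-radial sector (cf.\ Subsection \ref{sect:equiv_sob_norm}) and $L^2$-unitarity.

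For the nonlinear part, Young's inequality gives $\|V_u(t)\|_{L^\infty_x} \le \|\omega\|_{L^p} \|u(t)\|^2_{L^{2p'}_x}$. A fractional Leibniz (Kato--Ponce) rule using the Hölder pairs $(\infty, 2)$ and $(2p, 2p')$, together with a second Young-plus-Leibniz step to bound $\||D|^s V_u\|_{L^{2p}_x} \lesssim \|\omega\|_{L^p} \|u\|_{\dot H^s_x} \|u\|_{L^{2p'}_x}$, produces the pointwise-in-time estimate
\[
\|N(u)(t)\|_{\dot H^s_x} \lesssim \|\omega\|_{L^p} \|u(t)\|^2_{L^{2p'}_x} \|u(t)\|_{\dot H^s_x}.
\]
Hölder in time with exponents $(p, p')$ then yields $\|N(u)\|_{L^1_T \dot H^s_x} \lesssim T^{1/p'}\|\omega\|_{L^p} \|u\|^3_{X_T}$. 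Feeding this into both the energy estimate and the Minkowski-plus-Theorem \ref{thm:Stri_3D} bound for the Duhamel term gives $\|\Phi(u) - e^{it\D_\nu} u_0\|_{X_T} \lesssim T^{1/p'}\|\omega\|_{L^p}\|u\|^3_{X_T}$.

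The same manipulations applied to the trilinear difference $N(u)-N(v)$ provide the Lipschitz bound $\|\Phi(u)-\Phi(v)\|_{X_T} \lesssim T^{1/p'}\|\omega\|_{L^p}(\|u\|^2_{X_T}+\|v\|^2_{X_T}) \|u-v\|_{X_T}$, so that choosing $T = T(\|u_0\|_{\dot H^s}, \|\omega\|_{L^p})$ small enough makes $\Phi$ a strict contraction on the closed ball of radius $2C\|u_0\|_{\dot H^s}$ of $X_T$, and Banach's fixed-point theorem delivers the unique solution. The main obstacle is the pointwise-in-time nonlinear bound: the Kato--Ponce exponents must be chosen so that every factor of $u$ ends up in either $L^{2p'}$ or $\dot H^s$ — the only norms that $X_T$ controls — which rigidly forces the Strichartz pair $(2p, 2p')$ and, in turn, makes admissibility of that pair equivalent to the standing hypothesis $p > \tfrac{3}{1+2\sqrt{1-\nu^2}}$.
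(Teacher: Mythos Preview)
Your proposal is correct and follows essentially the same contraction argument as the paper: same function space, same Strichartz pair $(2p,2p')$, same Kato--Ponce/Young estimate $\lVert N(u)\rVert_{\dot H^s}\lesssim \lVert\omega\rVert_{L^p}\lVert u\rVert_{L^{2p'}}^2\lVert u\rVert_{\dot H^s}$, and the same H\"older-in-time factor $T^{1-1/p}$. The only cosmetic discrepancy is that you invoke Theorem~\ref{thm:Stri_3D} (the $L^q_{r^2dr}L^2_\theta$ estimate), whereas the paper uses its Proposition~\ref{prop:stri_rad}, which upgrades this to a genuine $L^q_x$ estimate for Dirac-radial data; since only finitely many angular modes are present, the two norms are equivalent and the gap is harmless.
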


\medskip
The choice of such initial datum is twofold motivated. First, we notice that (see \Cref{prop:stri_rad}) if $u_0$ is Dirac-radial then the Strichartz estimates hold in the classical way. Second, the Dirac-radiality of the solution is preserved by the nonlinearity (as observed in \Cref{rmk:radiality}). Then, the proofs of Theorems \ref{thm:nla1} and \ref{thm:nla2} will be based on standard fixed-point arguments on suitable complete metric spaces (see \cite{CO06}, \cite{MT09} and references therein for related results). Observe that in the first one we do not use any kind of Strichartz estimates; they come into play if we want to require less integrability for $\omega$. However, we shall remark that, in both cases, we strongly exploit the equivalence between $\dot H^s$ and $\dot H^s_{\D_{\nu}}$ norms, which holds for every $s \in [0,1]$ if $\lvert \nu \rvert < \frac{\sqrt3}2$. The failing of this equivalence in the general 2D setting (see Subsection \ref{sect:equiv_sob_norm}), prevents us from extending the result on $\R^2$. To conclude, notice that the conditions on $\omega$ are satisfied by the Yukawa potential $V_b(x) = c \frac{e^{-b\lvert x \rvert}}{\lvert x \rvert} \in L^p(\R^3)$ for all $p <3$, $b>0$ and by $\omega(x) = \langle x \rangle^{-\alpha} \in L^p(\R^3)$ for all $p > \frac3{\alpha}$, $\alpha > 0$. It would be interesting to consider the case $\omega = \delta_0$, in order to recover the standard cubic nonlinearity $N(u) = \langle \beta u,u\rangle u$. However, one expects to obtain local well-posedness results in $H^s(\R^3)$ in the subcritical case $s > s_c$. The critical exponent $s_c$ is given by the homogeneity of the Cauchy problem and it can be obtained by scaling arguments. In this case $s_c = 1$. However, for $s>1$ we don't have anymore the equivalence between $\dot H^s$ and $\dot H^s_{\D_{\nu}}$ norms. This would prevent the use of standard tools developed in Sobolev spaces, e.g.\ Sobolev embeddings, and would require additional work to adapt them in the spaces obtained by the action of the Dirac-Coulomb operator. This would be the object of future works.

\medskip

The paper is divided into 4 Sections. In Section \ref{sec:preliminaries} we introduce the necessary preliminaries: the partial wave decomposition, the ``relativistic'' Hankel transform and we recall the estimates for the generalized eigenfuction in 3D, deducing the ones in 2D. We also discuss the relationship between the standard Sobolev norms and the ones induced by the Dirac-Coulomb operator. Section \ref{sec:proof_l_thm} is devoted to the proofs of the a priori estimates, i.e., Theorems \ref{thm:Stri_2D}, \ref{thm:Stri_3D} and \ref{thm:loc_smo}. Lastly, in Section \ref{sec:proof_nl_thm} we prove the nonlinear applications, i.e.\ Theorems \ref{thm:nla1} and \ref{thm:nla2}.

\section{Preliminaries}
\label{sec:preliminaries}

In this section we collect some tools that we will use in the proof of the results presented above. 

\subsection{Partial wave decomposition}
\label{part_wave_decomp}
A crucial aspect of the Dirac-Coulomb operator is that it can be seen as a radial operator with respect to some suitable decomposition, both in dimension 2 and 3. We recall these decompositions, refering to \cite{T92} for all the details. \\
We use spherical coordinates to write 
\[
L^2(\R^n ; \C^N) \cong L^2 ( (0, \infty), r^{n-1}dr ) \otimes L^2 (\mathbb S^{n-1}; \C^N).
\]
Then, we use the partial wave decompositions to define the isomorphisms
\[
\begin{split}
L^2 (\mathbb S^1; \C^2) & \cong \bigoplus_{k \in \Z + \frac12} \frk h^2_k,\\
L^2( \mathbb S^2; \C^4) & \cong \bigoplus_{k \in \Z^*} \bigoplus_{m_k = -\lvert k \rvert + \frac12}^{\lvert k \rvert - \frac12} \frk h^3_{k,m_k}
\end{split}
\]
where each subspace $\frk h^n$ is two-dimensional and it is left invariant under the action of the Dirac-Coulomb operator. The orthonormal basis of each $\frk h^2_k$, $\big \{ \Xi^+_k(\theta), \Xi^-_k (\theta) \big \}$ is expressed in terms of the ``classic Fourier basis'', that is
\begin{equation}
\label{circ_harm}
\Xi^+ _k (\theta) = \frac{1}{ \sqrt{2 \pi}} \begin{pmatrix}  e^{i ( k - 1/2) \theta} \\ 0 \end{pmatrix}, \quad 
\Xi^- _k (\theta) = \frac{1}{ \sqrt{2 \pi}} \begin{pmatrix} 0 \\ e^{i ( k + 1/2) \theta}  \end{pmatrix},
\end{equation}
analogously, the orthonormal basis of each $\frk h^3_{k,m_k}$, $\big \{ \Xi^+_{k, m_k}(\theta_1, \theta_2), \Xi^-_{k,m_k} (\theta_1, \theta_2) \big \}$ is expressed in terms of the standard spherical harmonics $Y^m_l(\theta_1, \theta_2)$, that is 
\begin{equation}
\label{sph_harm}
\Xi^+_{k,m_k} (\theta_1,\theta_2)= \begin{pmatrix} i \Omega^{m_k}_{k} \\ 0_2 \end{pmatrix}, \quad \Xi^-_{k, m_k} = \begin{pmatrix} 0_2 \\ \Omega^{m_k}_{-k} \end{pmatrix}
\end{equation}
where
\[
\Omega_{k,m_k} = \frac1{\sqrt{\lvert 2k + 1 \rvert}} \begin{pmatrix} \sqrt{ \lvert k-m_k+ \tfrac12\rvert} Y^{m_k - \frac12}_{\lvert k + \frac12 \rvert - \frac12} \\ \sgn(-k) \sqrt{\lvert k+ m_k + \tfrac12 \rvert} Y^{m_k+\frac12}_{\lvert k + \frac12\rvert - \frac12} \end{pmatrix}.
\]

We thus have the unitary isomorphisms
\[
L^2 (\R^2; \C^2 ) \cong \bigoplus_{k} L^2 ((0,\infty), r dr) \otimes \frk h^2_k
\]
\[
L^2(\R^3;\C^4) \cong \bigoplus_{k,m_k} L^2((0,\infty), r^2 dr) \otimes \frk h^3_{k,m_k}
\]
given, respectively, by the decompositions 
\begin{equation}
\begin{split}
\label{decomp}
\Phi^2 (x) &= \sum_{k \in \Z + \frac12}  \phi^+_k (r) \,  \Xi^+_k (\theta) + \phi^-_k(r) \, \Xi^-_k (\theta) \\
\Phi^3 (x) &= \sum_{k \in \Z^*} \sum_{m = - \lvert k \rvert + \frac12}^{\lvert k \rvert - \frac12} \psi^+_{k,m_k}(r) \Xi^+_{k,m_k}(\theta_1, \theta_2) + \psi^-_{k,m_k}(r) \Xi^-_{k,m_k} (\theta_1, \theta_2).
\end{split}
\end{equation}
The action of $\D_{n,\nu}$ on each partial wave subspace $L^2((0,\infty), r^{n-1}dr)^2 \otimes \frk h^n$ can be represented by the radial matrices
\begin{equation}
\label{d^n_k}
d_{\nu,k}^n = \begin{pmatrix} - \frac{\nu}r & - (\frac{d}{dr} + \frac{n-1}{2r}) + \frac{k}r \\ \frac{d}{dr} + \frac{n-1}{2r}+ \frac{k}r & - \frac{\nu}r  \end{pmatrix},
\end{equation}
which are well defined on $C_0^\infty ((0, \infty), r^{n-1}dr )^2 \subset L^2((0,\infty), r^{r-1}dr)^2$.\\
Notice that formulas \eqref{d^n_k} only depend on $k$, then, in order to give a unified treatment of the two cases $n=2,3$, in what follows we will maintain only the dependence on the parameter $k$ for $n=3$. Moreover, we will omit the dependence on $n$ of the angular part.
Thus, if $\varphi \in L^2(\R^n; \C^4)$ we will decompose it as
\begin{equation}
\label{form:decomp}
\Phi(x) = \sum_{k \in \mathcal A _n} \varphi_k^+ (r) \Xi^+_k(\theta) + \varphi_k^- (r) \Xi_k^- (\theta) = \sum_{k \in \mathcal A_n} \varphi_k(r) \cdot \Xi_k (\theta)
\end{equation}
where if $n=2$, $\mathcal A _2 = \Z + \frac12$, $\theta \in \mathbb S^1$ and the functions $\Xi_k$ are the ones defined in \eqref{circ_harm}, instead if $n=3$, $\mathcal A_3 = \Z^*$, $\theta \in \mathbb S^2$ and the functions $\Xi_k$ are as in \eqref{sph_harm} (omitting the dependence of $m_k$). With this decomposition, by Stone's theorem, the propagator is given by 
\[
e^{it \D_{\nu}} \Phi(x) = \sum_{k \in \mathcal A_n} \varphi^+_k (t, r) \, \Xi^+_k (\theta) + \varphi_k^- (t,r) \, \Xi^-_k (\theta)
\]
where
\[
\begin{pmatrix} \varphi^+_k (t,r) \\ \varphi^-_k(t,r) \end{pmatrix} =
 e^{it d_{\nu,k}} \begin{pmatrix} \varphi^+_{k} (r) \\ \varphi^-_{k} (r) \end{pmatrix}.
\]
\begin{remark}
\label{remark:Dirac-rad}
Observe that the radial functions (meaning a vector of four radial functions) are contained in the firsts eigenspaces (correspondig to $k=\pm \frac12$ if $n=2$ and to $k = \pm 1$ if $n=3$) but they are not left invariant, in general, by the Dirac operator. In order to consider invariant sets of functions, we call $u \in L^2(\R^n; \C^N)$ \emph{Dirac-radial} if in the decomposition given by \eqref{form:decomp}, for $n=2$ $u^\pm_k(r) = 0$ for all $\lvert k \vert > \frac12$ and for $n=3$ $u^\pm_{k,m_k}(r) = 0$ for all $\lvert k \vert > 1$. On the contrary, $u \in L^2(\R^n; \C^N)$ is \emph{Dirac-non radial} if it is orthogonal to the first partial wave subspaces; more precisely if, in the decomposition given by \eqref{decomp}, for $n=2$, $u^\pm_k(r) = 0$ if $\lvert k \rvert =\frac12$ and, for $n=3$, $u_{k,m_k}^\pm(r)=0$ if $\lvert k \rvert =1$.
\end{remark}


\subsection{Relativistic Hankel transform}
\label{section:rel_Hank_trans}
Once one has decomposed the Dirac-Coulomb operator in a sum of radial operators, the key idea is to looking for an isometry that transforms each radial differential operator into a multiplication operator. This is the rule of the ``relativistic'' Hankel transform, which is built with the generalized eigenfunctions $\psi_{k,\epsilon}$ of $\D_{n,\nu}$. The idea of this construction was borrowed by \cite{BPST-Z03} in which the author considered the Hankel transform. This is built with the Bessel functions that are the generalized eigenfunctions for the radial Schrödinger operator (see e.g.\ \cite{BPST-Z03}, Section 2.1). In this sense the transform we consider can be viewed as a relativistic counterpart of the standard one.
For the sake of completeness we recall in this Subsection the definition and its properties, without proofs. We refer to \cite{CS16} (Section 2.2) for a complete presentation.
\begin{definition}
Let $\Phi \in L^2((0,\infty) ,r^{n-1} dr) \otimes \frk h^n_k$ for some fixed $k$ and let $\varphi(r) = (\varphi_1(r), \varphi_2(r))$ be the vector of its radial coordinates in decomposition \eqref{form:decomp}. \\We define the following integral transform
\[
\mathcal P_k \varphi (\epsilon) = \begin{pmatrix} \mathcal P^+_k \varphi (\epsilon) \\ \mathcal P^-_k \varphi(\epsilon) \end{pmatrix} = \int_0^{+\infty} H_k (\epsilon r) \cdot \varphi (r) r^{n-1} \, dr
\]
where 
\[
H_k (\epsilon r) = \begin{pmatrix} F_{k,\epsilon} (r) & G_{k,\epsilon}(r) \\ F_{k, -\epsilon} (r) & G_{k, -\epsilon} (r) \end{pmatrix} = \begin{pmatrix} \psi_{k,\epsilon} (r) ^{\tr} \\ \psi_{k, -\epsilon} (r) ^{\tr} \end{pmatrix}.
\]
Here
\begin{equation}
\label{fomula:psi_r}
\psi^n_{k,\epsilon} (r) = \begin{pmatrix} F^n_{k,\epsilon} (r) \\ G^n_{k,\epsilon}(r) \end{pmatrix}, 
\end{equation}
represents the vector of radial coordinates of the generalized eigenfunctions. That is, in the notation of \eqref{form:decomp}, $\Psi^n (x) = \sum_{k \in \mathcal A_n} \psi^n_{k,\epsilon} (r) \cdot \Xi_k (\theta)$ ``solves''
\[
\Big ( \D_n - \frac{\nu}{\lvert x \rvert} \Big )\Psi^n = \epsilon \Psi^n, \quad \epsilon >0.
\]
For the sake of completeness we recall the formulas for $F^n_{k,\epsilon}, G^n_{k,\epsilon}$, that are
\begin{equation}
\label{formula:eigen}
\begin{split}
F^n_{k,\epsilon} (r) &= \frac{\sqrt2 \lvert \Gamma (\gamma + 1 + i \nu) \rvert}{\Gamma (2\gamma +1)} e^{\frac{\pi \nu}{2}} (2\epsilon r)^{\gamma - \frac{n-1}2} Re \{ e^{i(\epsilon r + \xi)}  {_1F_1} (\gamma - i \nu, 2 \gamma +1, -2i \epsilon r) \},\\
G^n_{k,\epsilon} (r) &= \frac{i \sqrt2 \lvert \Gamma (\gamma + 1 + i \nu) \rvert}{\Gamma (2\gamma +1)} e^{\frac{\pi \nu}{2}} (2\epsilon r)^{\gamma - \frac{n-1}2} Im \{ e^{i(\epsilon r + \xi)} {_1F_1} (\gamma - i \nu, 2 \gamma +1, -2i \epsilon r) \}
\end{split}
\end{equation}
where ${_1F_1}(a,b,z)$ are confluent hypergeometric functions, $\gamma = \sqrt{k^2 - \nu^2}$ and $e^{-2i \xi} = \frac{\gamma -i\nu}k$ is a phase shift.
\end{definition}   

\begin{remark}
\label{rmk:gen_eig_neg}
We recall that the spectrum of $\D_n$ is purely absolutely continuous and it is the whole real line $\R$.
The formulas in \eqref{formula:eigen} give the generalized eigenstates of the continuous spectrum corresponding to the positive energies $\epsilon >0$. The ones corresponding to negative energies can be obtained using a charge conjugation argument. Then, one gets that 
\[
\psi^n_{k, -\epsilon}(r) = \begin{pmatrix} F^n_{k,-\epsilon} (r) \\ G^n_{k,-\epsilon}(r) \end{pmatrix} =  \begin{pmatrix} \tilde F^n_{-k,\epsilon} (r) \\ \tilde G^n_{-k,\epsilon}(r) \end{pmatrix}
\]
where $\tilde F^n$, $\tilde G^n$ are the functions obtained by \eqref{formula:eigen} by changing the sign of $\nu$.\\
We also want to underlain that the homogeneity of the generalized eigenfunctions $\psi^n_{k,\epsilon}(r)$ with respect to $\epsilon$ and $r$ is the same, in particular we have that $\psi^n_{k,\epsilon}(r)= \psi^n_{k,1} (\epsilon r)$. The lack of this homogeneity in the massive case prevents us from extending the results to that case.
\end{remark}

\begin{prop}
\label{properties_Pk}
The following properties hold:
\begin{enumerate}[i)]
\item $\mathcal P_k$ is an $L^2$-isometry,
\item $(\mathcal P_k d_{\nu,k} \varphi)(\epsilon) = \sigma_3 \epsilon (\mathcal P_k \varphi )(\epsilon)$, $\epsilon > 0$,
\item The inverse transform of $\mathcal P_k$ is given by
\begin{equation}
\label{inverseP_k}
\mathcal P_k^{-1} \varphi (r) = \int_0^{+\infty} H^*_k (\epsilon r) \cdot \varphi(\epsilon) \epsilon^{n-1} d\epsilon
\end{equation}
where
$
H^*_k (\epsilon r) = \begin{pmatrix} 
F_{k,\epsilon} (r) & F_{k,-\epsilon} (r) \\ 
G_{k, \epsilon} (r) & G_{k, -\epsilon} (r) 
\end{pmatrix}.
$
\end{enumerate}
\end{prop}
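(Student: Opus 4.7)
The plan is to reduce the three assertions to two ingredients: the pointwise eigenvalue relations satisfied by the radial eigenfunctions $\psi_{k,\pm\epsilon}$, and the formal self-adjointness of the radial operator $d_{\nu,k}$ with respect to the weight $r^{n-1}\,dr$. Throughout one works on a dense subspace (e.g.\ $C_0^\infty((0,\infty),\C^2)$) where all the manipulations are legal, and passes to $L^2$ by density at the end.

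I would tackle (ii) first, since it is the most algebraic and requires the least new input. The generalized eigenfunction equation $\big(\D_n - \tfrac{\nu}{|x|}\big)\Psi = \epsilon\Psi$, together with the radial reduction that produced the matrices \eqref{d^n_k}, yields the pointwise identities $d_{\nu,k}\psi_{k,\epsilon} = \epsilon\,\psi_{k,\epsilon}$ and, via the charge-conjugation remark \ref{rmk:gen_eig_neg}, $d_{\nu,k}\psi_{k,-\epsilon} = -\epsilon\,\psi_{k,-\epsilon}$. Since the first-order part of $d_{\nu,k}$ is $\frac{d}{dr} + \frac{n-1}{2r}$, which is antisymmetric with respect to $r^{n-1}\,dr$, the operator $d_{\nu,k}$ is symmetric on the test-space, and an integration by parts with vanishing boundary contributions gives
\[
\int_0^{+\infty}\psi_{k,\pm\epsilon}(r)^{\tr}\, d_{\nu,k}\varphi(r)\, r^{n-1}\,dr
= \int_0^{+\infty}\!\bigl(d_{\nu,k}\psi_{k,\pm\epsilon}(r)\bigr)^{\tr}\varphi(r)\, r^{n-1}\,dr
= (\pm\epsilon)\int_0^{+\infty}\!\psi_{k,\pm\epsilon}(r)^{\tr}\varphi(r)\, r^{n-1}\,dr.
\]
Reading off the two components of $\mathcal P_k$, this is precisely $\mathcal P_k(d_{\nu,k}\varphi)(\epsilon)=\sigma_3\,\epsilon\,\mathcal P_k\varphi(\epsilon)$.

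For (i) and (iii) the natural common strategy is to establish the ``generalized Plancherel'' identity
\[
\int_0^{+\infty} H_k^*(\epsilon r)\, H_k(\epsilon r')\,\epsilon^{n-1}\,d\epsilon \;=\; \frac{\delta(r-r')}{r^{n-1}}\, I_2,\qquad r,r'>0,
\]
together with the dual relation obtained by swapping the roles of $r$ and $\epsilon$. Plugging the first identity into the composition $\mathcal P_k^{-1}\mathcal P_k\varphi$ (with $\mathcal P_k^{-1}$ defined by the right-hand side of \eqref{inverseP_k}) and using Fubini gives $\mathcal P_k^{-1}\mathcal P_k\varphi=\varphi$, so that $\mathcal P_k^{-1}$ really is a left inverse; the dual relation shows it is also a right inverse, proving (iii). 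The isometry (i) then follows either directly from the dual orthogonality relation or, more economically, by computing $\|\mathcal P_k\varphi\|_{L^2(\epsilon^{n-1}d\epsilon)}^2$ as a double integral and collapsing the inner one by the same completeness identity, using the symmetry $H_k^* = H_k^{\tr}$ evident from the displayed formulas.

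The main obstacle is precisely this completeness/orthogonality identity: the functions $\psi_{k,\pm\epsilon}$ are not in $L^2$, and the integral must be interpreted in the distributional sense. Two routes are available. Abstractly, since $d_{\nu,k}$ is essentially self-adjoint on $C_0^\infty((0,\infty))^2$ for the range of $\nu$ considered and its spectrum is purely absolutely continuous and equal to $\R$, the spectral theorem produces a unitary that diagonalizes it, and a uniqueness argument identifies its integral kernel with the explicit generalized eigenfunctions \eqref{formula:eigen}; the Plancherel identity above is then the standard statement of the spectral resolution. Concretely, one uses the large-$r$ asymptotics of $_1F_1(\gamma-i\nu,2\gamma+1,-2i\epsilon r)$, which produce linear combinations of $r^{\pm i\nu}e^{\pm i\epsilon r}$, to reduce the oscillatory integral to a standard Fourier $\delta$-identity, controlling the subleading contributions through Kummer-type relations for the confluent hypergeometric function. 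Since this verification is carried out in detail in \cite{CS16} (Section 2.2), I would state the completeness identity and refer the reader there for the technical derivation, focusing the write-up on how the three properties of $\mathcal P_k$ are extracted from it.
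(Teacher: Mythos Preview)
Your proposal is correct and in fact goes further than the paper: the paper does not prove this proposition at all, stating the properties ``without proofs'' and referring the reader to \cite{CS16} (Section~2.2) for the full argument. Your sketch --- the integration-by-parts proof of (ii) via the symmetry of $d_{\nu,k}$ on $L^2(r^{n-1}dr)$, and the reduction of (i) and (iii) to a distributional completeness/orthogonality relation for the generalized eigenfunctions --- is the standard route and is consistent with what is done in \cite{CS16}; since you also defer the technical verification of the completeness identity to that reference, the two approaches coincide.
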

    
\subsection{Asymptotic estimates of the generalized eigenfunctions}
Other fundamental tools that we will use are some asymptotic estimates of the generalized eigenfunctions $\psi^n_{k}$: these will allow us to obtain an estimate on the $L^q$ norm of such functions on fixed interval, uniformly with respect to the parameter $k$. Notice that this is another substantial difference respect to the cases of the wave equation with an inverse square potential (\cite{MZZ13}) and the Dirac equation with Aharonov-Bohm magnetic potential (see \cite{CF17}). In these cases, indeed, the generalized eigenfunctions are basically Bessel functions for which asymptotic estimates are well known (see \Cref{rmk:bess} below).

\begin{theorem}
\label{thm:point_est}
Given $\nu \in \big [ -\frac{n-1}2, \frac{n-1}2 \big]$ and $k \in \mathcal A_n$, let $\sqrt{k^2-\nu^2}$ and consider the generalized eigenfunctions $\psi_k^n$ of $\D_{\nu,n}$ with eigenvalue $\epsilon=1$ given by formulas \eqref{fomula:psi_r} and \eqref{formula:eigen}. Then there exist positive constants $C,D$ independent of $k,\nu$ such that the following pointwise estimate holds for all $\rho \in \R \backslash \{0\}$:
\begin{equation}
\label{est_eig}
\big \lvert \psi_k^n(\rho) \big \rvert  \le C 
\begin{cases}
( \min \{ \tfrac{\lvert \rho \rvert}2, 1 \} )^{\gamma - \frac{n-1}2} e^{- D \lvert k \rvert}, & \quad 0 < \lvert \rho \rvert \le \max \{ \tfrac{\lvert k \rvert}2, 2 \}, \\
\lvert k \rvert^{- \frac{2n-3}4} \big ( \big \lvert  \lvert k \rvert - \lvert \rho \rvert \big \rvert + \lvert k \rvert ^{\frac13} \big ) ^{- \frac14}  , & \quad \frac{\lvert k \rvert}2 \le \lvert \rho \rvert \le 2 \lvert k \rvert, \\
\lvert \rho \rvert^{- \frac{n-1}2}, & \quad \lvert \rho \rvert  \ge 2 \lvert k \rvert,
\end{cases}
\end{equation}
Moreover, with a possibly larger $C$ and a smaller $D >0$, both independent of $k,\nu$, the following estimate holds
\begin{equation}
\big \lvert (\psi^n_k)'(\rho) \big \rvert \le C 
\begin{cases}
( \min \{ \tfrac{\lvert \rho \rvert}2, 1 \} )^{\gamma - \frac{n+1}2} e^{-D \lvert k \rvert}, & \quad 0 < \lvert \rho \rvert \le \max \{ \tfrac{\lvert k \rvert}2, 2 \}, \\
\lvert k \rvert^{- \frac{2n-3}4} \big ( \big \lvert  \lvert k \rvert - \lvert \rho \rvert \big \rvert + \lvert k \rvert ^{\frac13} \big ) ^{- \frac14}  , & \quad \frac{\lvert k \rvert}2 \le \lvert \rho \rvert \le 2 \lvert k \rvert, \\
\lvert \rho \rvert^{- \frac{n-1}2}, & \quad \lvert \rho \rvert  \ge 2 \lvert k \rvert,
\end{cases}
\end{equation}
\end{theorem}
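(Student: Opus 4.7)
The proof rests on the explicit representation \eqref{formula:eigen} of the radial eigenfunctions as products of a power prefactor, a confluent hypergeometric function, and a gamma-quotient coefficient. By the homogeneity noted in Remark \ref{rmk:gen_eig_neg} it suffices to treat $\epsilon=1$ and work in the rescaled variable $\rho$. A glance at \eqref{formula:eigen} shows that the $n$-dependence enters only through the exponent $\gamma-(n-1)/2$ of the prefactor, so pointwise
\begin{equation*}
\psi^2_k(\rho)=(2\rho)^{1/2}\,\psi^3_k(\rho).
\end{equation*}
Hence it is enough to establish the bounds for $n=3$, which are essentially those of \cite{CSZ23}, and then multiply by $(2\rho)^{1/2}$: for instance, in the transition region $\sqrt{\rho}\lesssim\sqrt{|k|}$ turns $|k|^{-3/4}$ into $|k|^{-1/4}$, matching $|k|^{-(2n-3)/4}$ at $n=2$; in the outer region $\sqrt{\rho}\cdot\rho^{-1}=\rho^{-1/2}$; and in the inner region the extra polynomial factor $\sqrt{\rho}\lesssim\sqrt{|k|}$ is harmlessly absorbed into $e^{-D|k|}$ by slightly shrinking $D$.

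For the three-dimensional bounds the three regimes in \eqref{est_eig} correspond to three distinct asymptotic behaviors of the function $M(\rho):=e^{i(\rho+\xi)}\,{_1F_1}(\gamma-i\nu,2\gamma+1,-2i\rho)$. In the \emph{inner region} $0<\rho\le\max\{|k|/2,2\}$ one uses the convergent power series for ${_1F_1}$, whose leading term is $1$, and controls the coefficient $|\Gamma(\gamma+1+i\nu)|/\Gamma(2\gamma+1)$ via Stirling: it decays super-exponentially in $\gamma\sim|k|$ and dominates the possibly large prefactor $(2\rho)^{\gamma-1}$ throughout $\rho\lesssim|k|$, producing the factor $e^{-D|k|}$. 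In the \emph{outer region} $\rho\ge 2|k|$ the standard large-argument asymptotics of ${_1F_1}$ yield amplitude $\rho^{-\gamma}$ for $M$, and multiplication by $(2\rho)^{\gamma-1}$ gives the decay $\rho^{-1}$. The \emph{transition region} $|k|/2\le\rho\le 2|k|$ contains the turning point $\rho\sim|k|$ of the underlying second-order radial equation and is the main technical obstacle: one must use uniform asymptotic expansions of ${_1F_1}$ with simultaneously large parameter and argument, producing an Airy envelope on the natural scale $|k|^{1/3}$ and yielding precisely $|k|^{-3/4}(||k|-\rho|+|k|^{1/3})^{-1/4}$; this is the heart of the analysis in \cite{CSZ23}.

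For the derivative bound, rather than differentiating the hypergeometric representation term by term, the cleanest route is to exploit the fact that $\psi^n_k$ solves the first-order radial system $d^n_{\nu,k}\psi^n_k=\sigma_3\psi^n_k$ at unit energy. Reading off $d^n_{\nu,k}$ from \eqref{d^n_k} gives the pointwise estimate
\begin{equation*}
\bigl|(\psi^n_k)'(\rho)\bigr|\lesssim\Bigl(1+\tfrac{|k|+|\nu|}{\rho}\Bigr)\,\bigl|\psi^n_k(\rho)\bigr|.
\end{equation*}
In the inner region $|k|/\rho\gtrsim 1$, so the inequality gains an extra factor $\rho^{-1}$ and lowers the polynomial exponent from $\gamma-(n-1)/2$ to $\gamma-(n+1)/2$; the residual multiplicative $|k|$ is absorbed into the exponential decay by slightly reducing $D$. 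In the transition and outer regions $|k|/\rho\lesssim 1$, hence $(\psi^n_k)'$ inherits the very same pointwise bound as $\psi^n_k$, as claimed.
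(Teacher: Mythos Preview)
Your approach coincides with the paper's: invoke \cite{CSZ23} for $n=3$ and pass to $n=2$ via the prefactor relation $\psi^2_k=(2\rho)^{1/2}\psi^3_k$ visible in \eqref{formula:eigen}. One point deserves care: that identity is only formal, because in two dimensions $k$ ranges over $\Z+\tfrac12$ whereas the statement in \cite{CSZ23} is for $k\in\Z^*$, so the object ``$\psi^3_k$'' with half-integer $k$ is not literally covered by that theorem. You must therefore check that the \emph{proof} in \cite{CSZ23} goes through for the full range of $\gamma=\sqrt{k^2-\nu^2}$ arising here (it does, since the arguments depend on $\gamma$ rather than on the integrality of $k$); this is exactly why the paper says ``adapt the proof'' rather than ``apply the result''. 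Your treatment of the derivative via the first-order radial system, yielding $|(\psi^n_k)'|\lesssim(1+|k|/\rho)\,|\psi^n_k|$, is a clean shortcut that the paper does not make explicit and is a perfectly valid alternative to redoing the asymptotic analysis for $(\psi^n_k)'$.
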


\begin{proof}
The proof for the case $n=3$ is given in \cite{CSZ23}, Thm.\ 1.1. For the case $n=2$ we observe that, from formulas \eqref{formula:eigen}, we have a relation between the generalized eigenfunctions in dimensions 2 and 3; they differ from a factor $(2\epsilon r)^{\frac12}$ and from the range where the parameter $k$ lives. Then, it is possibile to adapt the proof in \cite{CSZ23}, with minor modifications, to deduce the estimates in dimension 2.
\end{proof}

\begin{remark}
\label{rmk:bess}
Notice that the obtained estimates are the same, for large value of $k$, that holds for the generalized eigenfunctions of the wave equation with inverse square potentials; in fact, in that case the functions are given by
\[
\tilde \psi^n_{\rho} (r) = (\rho r)^{- \frac {n-2}2} J_{\nu(k)} (\rho r)
\]
where $J_\nu$ is the Bessel function of order $\nu$, $\nu(k) = \sqrt{ \mu(k)^2 + a}$, $\mu(k) = \frac{n-2}2 + k$, $k \in \N$ and $a > - \frac{(n-2)^2}4$. The Bessel functions $J_\nu$, for $\nu \ge2$, enjoy the pointwise bounds
\[
\lvert J_\nu (\rho) \rvert \le C
\begin{cases}
e^{- d \nu}, &\quad 0 < \rho < \frac{\nu}2,\\
\nu^{- \frac14} ( \lvert \rho - \nu \rvert + \nu^\frac13 )^{- \frac14}, &\quad \frac{\nu}2 < \rho < 2\nu,\\
\rho^{-\frac12}, &\quad 2\nu < \rho < \infty,
\end{cases}
\]
for some $C$ and $d$ non depending on $\nu$ (see, e.g., \cite{Ste00}).
\end{remark}

\subsection{Equivalence of norms}
\label{sect:equiv_sob_norm}
As stated in the Introduction, in order to prove the local well-posedness of the nonlinear system we exploit the equivalence between the Sobolev norms $H^s$ and the ones induced by the action of the Dirac-Coulomb operator $\D_{n,\nu}$. In this Subsection we recall it and we also add some results in this direction about the 2D setting. \\
For what concerns the 3D case we have the following  

\begin{prop}
\label{prop:equi_norm_3d}
Let $\lvert \nu \rvert < \frac{\sqrt3}2$ and $u \in \dom (\D_{3,\nu})$. Then for every $s \in [0,1]$ there exist two positive constants $C_1$, $C_2$ such that 
\[
C_1 \lVert u \rVert_{\dot H^s(\R^3)} \le \lVert u \rVert_{\dot H^s_{\D_\nu} (\R^3)} \le C_2 \lVert u \rVert_{\dot H^s(\R^3)}.
\] 
\end{prop}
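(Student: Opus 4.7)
The plan is to establish the equivalence at the two endpoints $s=0$ and $s=1$ separately, and then recover every intermediate case by complex interpolation of the Hilbert scales $\{\dot H^s\}_{s \in [0,1]}$ and $\{\dot H^s_{\D_\nu}\}_{s \in [0,1]}$, both of which are defined via the spectral calculus of the self-adjoint operators $\sqrt{-\Delta}$ and $|\D_\nu|$ respectively. The case $s=0$ is immediate since both norms reduce to $\|u\|_{L^2}$ by functional calculus ($|\D_\nu|^0 = \mathbbm 1$).

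For the endpoint $s=1$ I would prove $\|\D_\nu u\|_{L^2} \sim \|\nabla u\|_{L^2}$. The upper bound is elementary: by the anticommutation relations exploited in \eqref{eq:Dirac-wave}, $\D_3^2 = -\Delta\,\mathbbm 1_{4\times 4}$, so $\|\D u\|_{L^2} = \|\nabla u\|_{L^2}$. Combining the triangle inequality with the classical Hardy inequality $\||x|^{-1}u\|_{L^2(\R^3)} \leq 2\|\nabla u\|_{L^2}$ (valid for $\C^4$-valued spinors component-wise) yields
\[
\|\D_\nu u\|_{L^2} \leq \|\D u\|_{L^2} + |\nu|\, \||x|^{-1} u\|_{L^2} \leq (1 + 2|\nu|)\|\nabla u\|_{L^2}.
\]

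For the reverse inequality $\|\nabla u\|_{L^2} \lesssim \|\D_\nu u\|_{L^2}$ I would pass to the partial wave decomposition of \Cref{part_wave_decomp}. On each partial wave subspace, a direct computation from \eqref{d^n_k} shows that $(d_{\nu,k})^2$ is a radial Schrödinger-type matrix operator of the form $-\partial_r^2 - \tfrac{2}{r}\partial_r + c_k(r)$, where the inverse-square part of $c_k(r)$ has coefficient $k^2 - \nu^2$ (up to a bounded off-diagonal perturbation). Since $|k|\geq 1$ on 3D partial waves and $\nu^2 < 3/4$, this coefficient stays strictly greater than the critical Hardy constant $1/4$ uniformly in $k$. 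Applying the sharp radial Hardy inequality on each partial wave component and summing by orthogonality gives $\|\nabla u\|_{L^2}^2 \lesssim \|\D_\nu u\|_{L^2}^2$, with a constant that depends on $\nu$ only through $\sqrt{3/4 - \nu^2}$.

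The main obstacle is precisely this reverse inequality at $s=1$: one needs to control the Hardy comparison uniformly over all partial waves, and the worst case $|k|=1$ is exactly what dictates the threshold $|\nu|<\sqrt{3}/2$, as the inverse-square coefficient there degenerates to the critical Hardy constant $1/4$ as $|\nu|\uparrow \sqrt{3}/2$. Once the two endpoints are secured, complex interpolation applied to the identity map between the pairs $(L^2, \dot H^1)$ and $(L^2, \dot H^1_{\D_\nu})$, both of which are powers of self-adjoint operator scales, delivers the two-sided bound $C_1 \|u\|_{\dot H^s} \leq \|u\|_{\dot H^s_{\D_\nu}} \leq C_2 \|u\|_{\dot H^s}$ for every $s \in (0,1)$, completing the proof.
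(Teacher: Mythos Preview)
Your overall strategy---prove the equivalence at $s=0,1$ and interpolate---is sound and mirrors what the paper itself does for the 2D analogues in Subsection~\ref{sect:equiv_sob_norm}, using operator monotonicity of $t\mapsto t^p$ in place of complex interpolation. The paper's own proof here is much shorter: it notes that $\min\{1,\tfrac12+\sqrt{1-\nu^2}\}=1$ when $\lvert\nu\rvert<\sqrt3/2$ and then cites Corollary~1.8 of \cite{FMS19} (stated as \Cref{lemma:equiv_3D}), which already delivers both inequalities for every $s\in[0,1]$.

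There is, however, a genuine gap in your lower bound at $s=1$. Squaring $d_{\nu,k}$ from \eqref{d^n_k} does \emph{not} produce a Schr\"odinger-type operator with inverse-square coefficient $k^2-\nu^2$ and bounded off-diagonal remainder. In $L^2(r^2\,dr)$ the diagonal entries of $(d_{\nu,k})^2$ come out as $-\partial_r^2-\tfrac2r\partial_r+(k^2\pm k+\nu^2)/r^2$ (the sign in front of $\nu^2$ is $+$, not $-$), and the off-diagonal entries are of the form $\pm(2\nu/r)\partial_r-\nu(2k\mp1)/r^2$: first-order operators with singular coefficients, not bounded perturbations. The parameter $\gamma_k^2=k^2-\nu^2$ \emph{does} govern the problem---it determines the limit-point/limit-circle behaviour of $d_{\nu,k}$ at $r=0$ and hence the threshold $\sqrt3/2$---but it emerges only after a further decoupling transformation of the $2\times2$ system, not from naive squaring. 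As written, the Hardy comparison you describe does not apply. To repair the argument you would need either that decoupling step, or simply to invoke the known domain identity $\mathrm{dom}(\D_{3,\nu})=H^1$ for $\lvert\nu\rvert<\sqrt3/2$ and then use the common scale-invariance of $\D_\nu$ and $\nabla$ to upgrade the graph-norm equivalence to the homogeneous one.
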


\begin{proof}
We start observing that if $\lvert \nu \rvert < \frac{\sqrt3}2$ then the domain of the self-adjoint extension of $\D_{3,\nu}$ is $H^1(\R^3;\C^4)$ (see \cite{T92} Section 4.3.3). Moreover, for $\nu$ in this range, $\min \{1, \tfrac12 + \sqrt{1 - \nu^2} \} = 1$.  Then, the statement  comes directly from the following Lemma (it is Corollary 1.8 in \cite{FMS19}).

\begin{lemma}
\label{lemma:equiv_3D}
Let $\lvert \nu \rvert \le 1$. For any $f \in C_c^{\infty}(\R^3;\C^4)$ we have
\begin{enumerate}[i)]
\item if $s \in [0, \min \{1, \tfrac12 + \sqrt{1-\nu^2} \} ]$, then
\[
\big \lVert \lvert - \Delta \rvert^{\frac s2} f \big  \rVert_{L^2} \lesssim_{\nu,s} \big \lVert \lvert \D_{\nu} \rvert^s f \big \rVert_{L^2};
\]
\item if $s \in [0,1]$, then
\[
\big \lVert \lvert \D_{\nu} \rvert^s f \big \rVert_{L^2} \lesssim_{\nu,s} \big \lVert \lvert - \Delta \rvert^{\frac s2} f \big  \rVert_{L^2}
\]
\end{enumerate}
\end{lemma}

\end{proof}

In the 2D case we cannot hope to obtain the same results; firstly, we recall that the domain of the distinguished self-adjoint extension of $\D_{2,\nu}$ is not contained in $H^1(\R^2;\C^2)$ (see \cite{MorMu18}, Corollary 16), for any $\nu \ne 0$
. Secondly, the property $ii)$ is proven using the Hardy's inequality, which we know to fail on $\R^2$. \\
However, the distinguished self-adjoint extension is chosen such that its domain is contained in $H^{\frac12}(\R^2;\C^4)$, if $\lvert \nu \rvert < \frac12$. This suggests that at least inequality $i)$ in \Cref{lemma:equiv_3D} could hold if we take $s \in [0, \tfrac12 ]$. In fact, we have the following
\begin{prop}
\label{prop:1equiv_2D}
Let $\lvert \nu \rvert < \frac12$. For any $u \in \dom (\D_{2,\nu})$ and $s \in \big [0, \frac12 \big]$, we have
\[
\big \lVert u \big \rVert_{\dot H^s} \lesssim_{\nu,s} \big \lVert \lvert \D_\nu \rvert^s u \big \rVert_{L^2}.
\]
\end{prop}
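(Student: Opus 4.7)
The plan is to establish the endpoint $s=\frac12$ and then reach intermediate $s \in (0,\frac12)$ by interpolation; the case $s=0$ is trivial. For the interpolation step I would apply Stein's complex-interpolation theorem to the analytic family $T_z = \lvert D \rvert^{z} \lvert \D_\nu \rvert^{-z}$ on the strip $\{0 \le \Re z \le \frac12\}$: on the line $\Re z = 0$ the operator $T_{it}$ is unitary on $L^2(\R^2;\C^2)$ by the spectral theorem, and the line $\Re z = \frac12$ is exactly the endpoint estimate, so that the remaining task is
\[
\lVert u \rVert_{\dot H^{\frac12}} \lesssim_\nu \big\lVert \lvert \D_\nu \rvert^{\frac12} u \big\rVert_{L^2}.
\]

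For this endpoint I would decompose $u$ in partial waves, $u = \sum_{k \in \Z + \frac12} u_k$ with $u_k = \varphi_k^+ \Xi_k^+ + \varphi_k^- \Xi_k^-$. The circular harmonics in \eqref{circ_harm} diagonalise $-\Delta_{\mathbb{S}^1}$ and hence are orthogonal in every $\dot H^s(\R^2)$, while $\D_\nu$ preserves each partial wave subspace; both norms therefore split orthogonally and the problem reduces to showing, uniformly in $k$,
\[
\lVert u_k \rVert_{\dot H^{\frac12}(\R^2)}^2 \lesssim_\nu \big\lVert \lvert d_{\nu,k} \rvert^{\frac12} \varphi_k \big\rVert_{L^2((0,\infty), r\, dr)^2}^2.
\]
By \Cref{properties_Pk} the right-hand side equals $\int_0^\infty \epsilon^2 \lvert \mathcal{P}_k \varphi_k(\epsilon) \rvert^2 d\epsilon$. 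On the left-hand side, the 2D Fourier transform in polar coordinates reduces to the classical Hankel transforms $\mathcal{J}_m f(\rho) = \int_0^\infty f(r) J_m(\rho r)\, r\, dr$ acting on each angular mode $e^{i(k \mp \frac12)\theta}$, giving, up to a universal constant, $\int_0^\infty \rho^2 \big( \lvert \mathcal{J}_{k-\frac12} \varphi_k^+(\rho) \rvert^2 + \lvert \mathcal{J}_{k+\frac12} \varphi_k^-(\rho) \rvert^2 \big) d\rho$.

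The desired inequality thus amounts to the $L^2$-boundedness, uniformly in $k$, of the composed operator $\mathcal{J}_{k \pm \frac12} \circ \mathcal{P}_k^{-1}$ between the corresponding weighted $L^2$ spaces. I would verify this through a Schur-type estimate on its integral kernel, by combining the pointwise bounds of \Cref{thm:point_est} with the classical Bessel asymptotics recalled in \Cref{rmk:bess}. The hypothesis $\lvert \nu \rvert < \frac12$ enters decisively here: the generalised eigenfunctions $\psi^2_k$ behave like $r^{\gamma - \frac12}$ near the origin with $\gamma = \sqrt{k^2 - \nu^2}$, to be compared with the Bessel profile $\rho^{\lvert k \pm \frac12 \rvert}$, and under $\lvert \nu \rvert < \frac12$ the Coulomb deviation is small enough to keep the Schur kernel integrable against the radial weight $r\, dr$ uniformly in $k$. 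I expect this uniform-in-$k$ kernel estimate to be the main technical obstacle, with the threshold $\lvert \nu \rvert = \frac12$ corresponding precisely to the loss of integrability at the origin for the first partial wave $\lvert k \rvert = \frac12$, consistently with the fact that this is the critical coupling beyond which the distinguished self-adjoint extension fails to lie in $H^{\frac12}(\R^2;\C^2)$.
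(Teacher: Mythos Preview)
Your overall strategy --- prove the endpoint $s=\tfrac12$ and then interpolate down --- is in principle sound, and its two stages correspond exactly to the two ingredients the paper uses; but the paper imports both as known results, making the proof essentially two lines, whereas you propose to reconstruct them from scratch and leave the hard one unfinished.

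The endpoint inequality $\lVert u\rVert_{\dot H^{1/2}}\lesssim_\nu\lVert\,\lvert\D_\nu\rvert^{1/2}u\rVert_{L^2}$ is equivalent to the operator inequality $\lvert\D_\nu\rvert\ge C_\nu\,\sqrt{-\Delta}\otimes\mathbbm 1_2$, and this is precisely Theorem~1 of Morozov--M\"uller \cite{MorMu18}, which the paper simply cites. Your partial-wave-plus-Schur-kernel programme is in effect an outline for an independent proof of that theorem; the reduction to comparing $\mathcal J_{k\pm\frac12}$ with $\mathcal P_k$ is reasonable, but the decisive uniform-in-$k$ kernel bound is not carried out (you flag it yourself as the ``main technical obstacle''), so as written the proposal has a genuine gap at exactly the place where all the content lies.

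For the passage from $s=\tfrac12$ to general $s\in[0,\tfrac12]$, the paper invokes the L\"owner--Heinz theorem (operator monotonicity of $t\mapsto t^p$ for $p\in[0,1]$, see \cite{Carl10}): from $\lvert\D_\nu\rvert\ge C_\nu\sqrt{-\Delta}$ one gets $\lvert\D_\nu\rvert^{2s}\ge C_\nu^{2s}(-\Delta)^{s}$ for all $2s\in[0,1]$, which is the claim. Your Stein-interpolation route is morally the same device, but L\"owner--Heinz is the more direct formulation here and sidesteps any domain or growth verification for the analytic family $T_z=\lvert D\rvert^{z}\lvert\D_\nu\rvert^{-z}$ (note that $\sigma(\D_\nu)=\R$, so $\lvert\D_\nu\rvert^{-z}$ is unbounded and the setup of $T_z$ needs some care).
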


\begin{proof}
\label{proof:equi_norm}
From \cite{MorMu18} [Thm 1] we have that for every $\nu \in  \big ( - \frac12, \frac12 \big )$ there exists $C_\nu >0$ such that 
\[
\lvert \D_\nu \rvert \ge C_\nu \sqrt {- \Delta} \otimes \mathbbm{1}_2.
\]
Then, by operator monotonicity of the map $f(t) = t^p$, $p \in [0,1]$ (see \cite{Carl10} [Thm 2.6]), we conclude that
\[
\lvert \D_\nu \rvert^p \ge C_\nu^p \sqrt{- \Delta}^p \otimes \mathbbm1_2,
\]
that is the estimate with $2s = p$.
\end{proof}

For what concerns the reverse inequality, we observe that even if the Hardy's inequality fails in 2D, it still holds for non-radial functions (see \cite{BPST-Z03}, Prop.\ 1 pag 8). Then we have the following

\begin{prop}
Let $\lvert \nu \rvert < \frac12$. For any $u \in \dom(\D_{2,\nu})$, $u$ Dirac-non radial and $s \in [0, \tfrac12]$ we have
\[
\big \lVert \lvert \D_{\nu} \rvert ^s u \big \lVert_{L^2} \lesssim_{\nu,s} \big \lVert \lvert - \Delta \rvert^{\frac s2} u \big \lVert_{L^2}
\]
\end{prop}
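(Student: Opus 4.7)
The plan is to mirror the proof of \Cref{prop:1equiv_2D} by reducing the fractional statement to an endpoint form-comparison and then interpolating by operator monotonicity. Specifically, I would aim to establish the quadratic-form inequality
\[
\D_\nu^2 \;\le\; C_\nu^2\,(-\Delta)\otimes\mathbbm 1_2
\]
on the (closed, $\D_\nu$-invariant) Dirac-non radial subspace, and then invoke the L\"owner--Heinz inequality---i.e.\ operator monotonicity of $t\mapsto t^p$ for $p\in[0,1]$, the same tool used in the proof of \Cref{prop:1equiv_2D}---with $p=s$ to conclude $|\D_\nu|^{2s}\le C_\nu^{2s}(-\Delta)^s$. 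Pairing this with $u$ and taking square roots gives the claim.

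The endpoint form inequality I would prove on a dense core of Dirac-non radial test functions by simply expanding $\D_\nu=\D-\nu/|x|$ and using the triangle inequality,
\[
\lVert\D_\nu u\rVert_{L^2}\;\le\;\lVert\D u\rVert_{L^2}+|\nu|\,\lVert u/|x|\rVert_{L^2}.
\]
Since $\D^2=-\Delta\otimes\mathbbm 1_2$, one has $\lVert\D u\rVert_{L^2}=\lVert\nabla u\rVert_{L^2}=\lVert(-\Delta)^{1/2}u\rVert_{L^2}$. For the potential term I would invoke the non-radial 2D Hardy inequality (Proposition~1 of \cite{BPST-Z03}): reading off the basis \eqref{circ_harm}, a Dirac-non radial $u$ has no $k=\pm 1/2$ partial waves, so each scalar component of $u$ is orthogonal to the constant angular mode ($m=0$ would arise in the first component only from $k=1/2$, and in the second only from $k=-1/2$). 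Hardy then applies componentwise, yielding $\lVert u/|x|\rVert_{L^2}\lesssim\lVert\nabla u\rVert_{L^2}$, so the form inequality holds with $C_\nu\lesssim 1+|\nu|$.

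Passing from the test-function core to the appropriate form domain is routine: $\D_\nu$ and $-\Delta$ both commute with the orthogonal projection onto each partial-wave subspace, so the Dirac-non radial subspace is invariant, and a standard density argument extends the inequality to $H^1\cap(\text{Dirac-non radial})$---which is contained in $\dom(\D_\nu)$ precisely because Hardy \emph{is} available on this subspace. The main obstacle is exactly the failure of the standard 2D Hardy inequality, which is why the Dirac-non radial hypothesis cannot be dropped: the partial-wave description of this subspace is what kills the bad $m=0$ angular mode in each scalar component and restores Hardy with an explicit constant. The ceiling $s\le 1/2$ is then natural because the right-hand side $\lVert(-\Delta)^{s/2}u\rVert_{L^2}$ is automatically finite on $\dom(\D_\nu)\subset H^{1/2}$ only in that range; inside it, L\"owner--Heinz (valid for $p\le 1$, hence \emph{a fortiori} for $s\le 1/2$) produces the stated fractional estimate.
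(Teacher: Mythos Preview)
Your proposal is correct and follows essentially the same route as the paper: establish a Hardy inequality on the Dirac-non radial subspace, deduce the quadratic-form bound $\D_\nu^2\lesssim(-\Delta)\otimes\mathbbm 1_2$, and interpolate via operator monotonicity (L\"owner--Heinz), exactly as in the proof of \Cref{prop:1equiv_2D}. The only cosmetic difference is that the paper re-derives the spinor Hardy inequality directly from the partial-wave expansion and the angular derivative, whereas you reduce to the scalar non-radial Hardy of \cite{BPST-Z03} by the clean observation that excluding $k=\pm\tfrac12$ kills the $m=0$ Fourier mode in each component; these are two phrasings of the same computation.
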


\begin{proof}
We first prove that, if $f \in C_c^\infty(\R^2;\C^2)$ and Dirac-non radial, then the Hardy's inequality holds, i.e.
\begin{equation}
\label{eq:hardy_2d}
\big \lVert \lvert x \rvert^{-1} f \big \rVert_{L^2} \le \big \lVert \nabla f \big \rVert_{L^2}.
\end{equation}
We can decompose $f$ as:
\[
f(x) = \sum_{ k \in \Z + \frac12, \, k \ne \frac12} f_k^+(r) \Xi^+_k(\theta) + f_k^- (r) \Xi^-_k(\theta) = \colon f^+(r,\theta) + f^-(r,\theta).
\]
and we recall that $\partial_\theta \Xi^\pm_k (\theta) = \big (k \mp \frac12 \big) \Xi_k^\pm(\theta)$.
Then, 
\[
\int_0^{2\pi} \lvert f^\pm  \rvert ^2 d\theta = \sum_{k \in \Z + \frac12, \, k \ne \frac12} \lvert f^\pm_k \rvert^2 \le \sum_{k \in \Z + \frac12, \, k \ne \frac12} (k \mp \tfrac12)^2 \lvert f^\pm_k \rvert ^2 = \int_0^{2\pi} \lvert \partial_\theta f^\pm \rvert ^2 d\theta
\]
and thus
\[
\big \lVert \lvert x \rvert ^{-1} f \big \rVert_{L^2}^2 \le \int_0^{\infty} \int_0^{2\pi} \frac 1{r^2} \big [ \lvert \partial_\theta f^+ \rvert ^2 + \lvert \partial_\theta f^- \rvert^2 \big ] d\theta r dr \le \big \lVert \nabla f \big \rVert_{L^2}^2.
\]
Hence, by the Cauchy-Schwarz inequality and by \eqref{eq:hardy_2d}, we have 
\[
( \D_\nu)^2 \lesssim (-\Delta) \otimes \mathbbm 1_2.
\]
Then, by monotonicity\footnote{as in the proof of \Cref{prop:1equiv_2D}.}, we get that for any $f \in C_c^\infty(\R^2:\C^2)$, $f$ Dirac-non radial and for any $s \in [0,1]$
\[
\lVert f \rVert_{\dot H^s_{\D_\nu} (\R^2)} \lesssim_{\nu, s} \lVert f \rVert_{\dot H^s (\R^2)}.
\]
We conclude recalling that if $\lvert \nu \rvert < \frac12$ then  $\dom(\D_{2,\nu}) \subset H^\frac12 (\R^2;\C^2)$,  but not in $H^1(\R^2;\C^2)$ (see \cite{Mul16}).
\end{proof}


\section{Proofs of the results}
\label{sec:proof_l_thm}
Before going into the proofs of the above mentioned results, we exploit the tools described in Subsections \ref{part_wave_decomp} and \ref{section:rel_Hank_trans} in order to work with a ``nice'' representation of the solution. Let $u_0 \in L^2 (\R^n; \C^N)$, by \eqref{form:decomp} and property $ii)$ in \Cref{properties_Pk}, we have 
\begin{gather}
\label{formula:decomp_sol}
\notag u_0(x) = \sum_{k \in \mathcal A_n} u_{0,k} (r) \cdot \Xi_k (\theta), \\
e^{it\D_{\nu}} u_0 (x) = \sum_{k \in \mathcal A_n} \mathcal P_k^{-1} \big [ e^{it\rho \sigma_3} ( \mathcal P_k u_{0,k}) (\rho) \big ] (r) \cdot \Xi_k (\theta) 
\end{gather}
where $x = (r, \theta) \in \R^n$. Moreover, thanks to the $L^2$-orthogonal decomposition,  
 \begin{gather}
 \label{formula:norm_l2_id}
\lVert u_0 \rVert_{L^2_x} = \Big ( \sum_{k \in \mathcal A_n} \lVert u_{0,k} (r) \rVert_{L^2_{r^{n-1}dr}} ^2 \Big )^{\frac12}\\
\label{formula:mixed_norm_sol}
\begin{split}
\big \lVert e^{it\D_{\nu}} u_0 \big \rVert_{L^2_t L^2_x} ^2 & = \Big \lVert \sum_{k \in \mathcal A_n} \mathcal P_k^{-1} \big [ e^{it\rho \sigma_3} ( \mathcal P_k u_{0,k}) (\rho) \big ] (r) \cdot \Xi_k (\theta) \Big \rVert_{L^2_t L^2_{r^{n-1}dr} L^2_{\theta}}^2\\
& = \sum_{k \in \mathcal A_n} \Big \lVert \mathcal P_k^{-1} \big [ e^{it\rho \sigma_3} ( \mathcal P_k u_{0,k}) (\rho) \big ] (r) \Big \rVert_{L^2_t L^2_{r^{n-1} dr}}^2.
\end{split}
\end{gather}
More generally, for $p,q \ge 2$, by Minkowski's inequality $\lVert \cdot \rVert_{L^ql^2} \le \lVert \cdot \rVert_{l^2L^q}$ $\forall q \ge2$, we get 
\begin{equation}
\label{formula:decom_norm_mix}
\begin{split}
\big \lVert e^{it\D_{\nu}} u_0 \big \rVert_{L^p_t L^q_{r^{n-1}dr} L^2_{\theta}} 
& = \Big \lVert \mathcal P_k^{-1} \big [ e^{it\rho \sigma_3} ( \mathcal P_k u_{0,k}) (\rho) \big ] (r) \Big \rVert_{L^p_t L^q_{r^{n-1}dr} l^2_k (\mathcal A_n)} \\
& \le \Big ( \sum_{k \in \mathcal A_n} \big \lVert \mathcal P_k^{-1}  \big [ e^{it\rho \sigma_3} ( \mathcal P_k u_{0,k}) (\rho) \big ] (r) \big \rVert^2_{L^p_t L^q_{r^{n-1}dr}} \Big )^{\frac12}.
\end{split}
\end{equation}

\subsection{Proof of \Cref{thm:loc_smo}}

Thanks to \eqref{formula:norm_l2_id} and \eqref{formula:mixed_norm_sol}, it suffices to show that, for any fixed $k \in \mathcal A_n$,
\[
R^{-1} \Big \lVert \mathcal P_k^{-1} \big [ e^{it\rho \sigma_3} ( \mathcal P_k u_{0,k}) (\rho) \big ] (r) \Big \rVert_{L^2_t L^2_{r ^{n-1}dr} ([0,R])}^2 \le C \lVert u_{0,k} (r) \rVert_{L^2_{r^{n-1}dr}}^2
\]
where $C$ is a positive constant independent of $k$ and $R$.

Let $g_k(\rho) = \mathcal P_k u_{0,k}(\rho)$. Then, from \eqref{inverseP_k} and Plancherel's theorem in $t$ (on each component), we get 
\[
\begin{split}
\Big \lVert \mathcal P_k^{-1} \big [ e^{it\rho \sigma_3} ( g_k(\rho) \big ] (r) \Big \rVert_{L^2_t L^2_{r^{n-1}dr}([0,R])}^2
& = \Big \lVert \F^{-1}_{\rho \sigma_3 \to t} \big \{ H^*_k (r\rho) \cdot g_k(\rho) \rho^{n-1} \, \chi_{\R^+} (\rho)  \big \}\Big \rVert_{L^2 _{rdr} ([0,R]) L^2_t} ^2 \\
& \le \int_0^{+ \infty}  \int_0^R \big \lvert H_k^* (r \rho) g_k(\rho) \rho^{n-1} \big \rvert^2 r ^{n-1} dr d\rho \\
& \lesssim \int_0^{+\infty} \int_0^R \big ( \lvert \psi^n_k(r\rho)  \rvert^2 + \lvert \psi^n_k(-r\rho)  \rvert^2 \big) \lvert g_k(\rho) \rvert^2 \rho^{2(n-1)} r^{n-1} dr  d\rho.
\end{split}
\]
We thus need to estimate 
\begin{equation}
\label{psi(rrho)}
\int_0^{+\infty} \bigg ( \int_0^R \lvert \psi^n_k (r\rho) \rvert ^2 r^{n-1} dr \bigg ) \lvert g_k(\rho) \rvert ^2 \rho^{2(n-1)} d\rho.
\end{equation}
 
\begin{lemma}
Let $\psi_k^n$ be the generalized eigenfunction of $\D_{n,\nu}$ with eigenvalue $1$. Then, there exists a positive constant $C$ depending on $n$ but not on $k$ and $\nu$, such that 
\begin{equation}
\label{psi_k}
\frac1R \int_0^R \lvert \psi^n_k (r) \rvert^2 r^{n-1} dr \le C.
\end{equation}
\end{lemma}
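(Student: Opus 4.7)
The plan is to invoke the pointwise estimates of Theorem \ref{thm:point_est} (with $\rho=r$) and split the interval $[0,R]$ according to the three natural windows appearing there: $I_1=[0,\max\{|k|/2,2\}]$, $I_2=[|k|/2,2|k|]$, and $I_3=[2|k|,\infty)$, intersected with $[0,R]$. The goal in each piece is to produce a bound of the form $CR$ with $C$ independent of $k$ and $\nu$; summing the three contributions and dividing by $R$ then yields \eqref{psi_k}.

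On $I_1$, multiplying the bound by $r^{n-1}$ gives $|\psi_k^n(r)|^2 r^{n-1} \le C r^{2\gamma} e^{-2D|k|}$ for $r\le 2$ (since $\min\{r/2,1\}=r/2$), and $C e^{-2D|k|} r^{n-1}$ for $2\le r\le |k|/2$. Integrating the first from $0$ to $R\wedge 2$ gives $2^{n-1-2\gamma} R^{2\gamma+1}/(2\gamma+1)\cdot e^{-2D|k|} \le 2^{n-1} R$, the factor $(R/2)^{2\gamma}$ collapsing to at most $1$ thanks to $\gamma\ge 0$ and $R\le 2$. For the second piece, integration up to $|k|/2$ produces $C e^{-2D|k|} |k|^n$, which is $O(1)$ uniformly in $k$ by the exponential beating the polynomial, and is therefore $\le C R$ whenever $R\ge 2$.

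The key estimate is on the Airy-type window $I_2$, where the pointwise bound yields
\[
\int_{|k|/2}^{2|k|}|\psi_k^n(r)|^2 r^{n-1}\,dr \le C\,|k|^{-(2n-3)/2}(2|k|)^{n-1}\int_{|k|/2}^{2|k|}\bigl(||k|-r|+|k|^{1/3}\bigr)^{-1/2}\,dr.
\]
Changing variables $s=|r-|k||$ and using $\int_0^{|k|}(s+|k|^{1/3})^{-1/2}\,ds \le 2|k|^{1/2}$, the right-hand side is controlled by $C|k|^{-(2n-3)/2+(n-1)+1/2}=C|k|$. Since this contribution only appears when $R\ge |k|/2$, we have $C|k|/R \le 2C$. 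On $I_3$ the bound $|\psi_k^n(r)|^2 r^{n-1}\le C$ is immediate and integrates to at most $CR$.

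The main obstacle is the Airy-region computation: the three factors $|k|^{-(2n-3)/2}$ (from the pointwise bound), $|k|^{n-1}$ (from the weight $r^{n-1}$ with $r\sim|k|$), and $|k|^{1/2}$ (from the Airy-type integral) must combine to exactly $|k|$, which matches the length of $I_2$ up to a constant. It is this precise cancellation of exponents that makes the averaged bound \eqref{psi_k} uniform in both $k$ and $\nu$; once it is in hand, the other two regimes pose only bookkeeping issues (handling the sub-cases $R\le 2$, $2<R\le|k|/2$, $|k|/2<R\le 2|k|$, and $R\ge 2|k|$) and the lemma follows.
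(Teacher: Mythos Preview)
Your proof is correct and follows essentially the same approach as the paper: invoke the pointwise bounds of Theorem~\ref{thm:point_est} and split the integration domain according to the three regimes $r\lesssim |k|$, $r\sim |k|$, $r\gtrsim |k|$, with the crucial computation being the Airy-window estimate where the exponents combine to give exactly $C|k|\le 2CR$. The only organizational difference is that you index by integration sub-interval first and then track where $R$ lies, whereas the paper first fixes the position of $R$ (cases $R<|k|/2$, $|k|/2<R<2|k|$, $R>2|k|$) and then splits $[0,R]$; the underlying inequalities are identical.
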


\begin{remark}
\label{rmk:est_neg_rho}
We observe that the same estimate holds for $\psi^n_k(-r)$, that is the generalized eigenfunction with eigenvalue $-1$. In fact, as stated in \Cref{rmk:gen_eig_neg}, such eigenfunctions are obtained by $\psi^n_k$ by changing the sign of $k$ and $\nu$. However, all the estimates in the proof will be independent by the sign of $\nu$ and $k$.
\end{remark} 

\begin{proof}
Let $R > 0$. We use \eqref{est_eig} to estimate the integral in \eqref{psi_k}; we need to consider three different cases :
\begin{enumerate}[i)]
\item if $0 < R < \frac{\lvert k \rvert}2$, then we have 
\begin{equation}
\label{Rmink/2}
\begin{split}
\frac1R \int_0^R \lvert \psi^n_k(r) \rvert^2 r^{n-1} dr & \lesssim \frac1R \int_0^R (\min \{ \tfrac r2, 1 \})^{2 \gamma - (n-1)} e^{-2D\lvert k \rvert} r^{n-1} dr\\
& \le \frac1R \int_0^R (\min \{ \tfrac r2, 1 \})^{-(n-1)} e^{-2D\lvert k \rvert} r^{n-1}dr,
\end{split}
\end{equation}
if $\min \{ \tfrac r2, 1 \} = 1$ then
\[
\eqref{Rmink/2} \lesssim \frac1R R^n e^{-2DR} \le C;
\]
otherwise, if $\frac{r}2 \le 1$,
\[
\eqref{Rmink/2} \le \frac1R e^{-2D\lvert k \rvert} 2^{n-1} R \le C; 
\]
\item if $\frac{\lvert k \rvert}2 < R < 2 \lvert k \rvert$, then we split the interval $[0,R] = [0, \lvert k \rvert/2] + [\lvert k \rvert/2, R] = I_1 + I_2$:
\[
\frac1R \int_{I_1} \lvert \psi^n_k(r) \rvert^2 r^{n-1} dr \lesssim \frac1R e^{-2D \lvert k \rvert} {\frac{\lvert k \rvert}2}^n  \le C,
\]
\[
\begin{split}
\frac1R \int_{I_2} \lvert \psi^n_k(r) \rvert^2 r^{n-1} dr & \lesssim R^{-1} \int_{I_2} \lvert k \rvert^{- \frac{2n-3}2} \big ( \big \lvert \lvert k \rvert - r \big \rvert + \lvert k \rvert^{\frac13} \big )^{- \frac12} r^{n-1} dr\\
& \lesssim \lvert k \rvert^{- \frac12} \int_{\frac{\lvert k \rvert}2} ^{2 \lvert k \rvert} \big \lvert \lvert k \rvert - r \big \rvert^{- \frac12} dr \le C;
\end{split}
\]
\item if $R > 2\lvert k \rvert$, then $[0,R] = [0, \lvert k \rvert/2] + [\lvert k \rvert/2, 2\lvert k \rvert] + [2\lvert k \rvert, R] = I_1 + I_2 + I_3$:\\
estimates over $I_1$, $I_2$ as before,
\[
R^{-1} \int_{I_3} \lvert \psi^n_k(r) \rvert^2 r^{n-1} dr \lesssim R^{-1} \int_{2\lvert k \rvert}^R  dr \le C.
\]
\end{enumerate}
\end{proof}  
    
Then, coming back to \eqref{psi(rrho)}, we have, after a change of variable $\xi = r \rho$,
\[
\begin{split}
\eqref{psi(rrho)}  & = \int_0^{+\infty} \bigg ( \int_0^{R\rho} \lvert \psi_k (\xi) \rvert^2 \xi d\xi  \bigg ) \lvert g_k(\rho) \rvert^2 \rho^{n-2} d\rho \\
&\lesssim R \int_0^{+\infty} \lvert g_k(\rho) \rvert^2 \rho^{n-1} d\rho = C R \big \lVert \mathcal P_k u_{0,k}  \big \rVert_{L^2_{\rho d\rho}}^2,
\end{split}
\]
then the claim since $\mathcal P_k$ is an $L^2$-isometry.

\subsection{Proofs of Strichartz estimates}
\label{sec:proof_lin_thm_3}
In order to lighten the notation, in the following we will treat separately the cases $n=2,3$ and we will omit the dependence on $n$ of the generalized eigenfunctions. We start with the 2-dimensional case. 
\begin{proof}[Proof of Theorem \ref{thm:Stri_2D}]
The proof is divided in four steps. The key step is the second one in which we prove the following Lemma. This provides us Strichartz estimates on a fixed radial interval for a frequency localized solution on a fixed angular level. Then the linear estimates in \Cref{thm:Stri_2D} will follow by scaling arguments and interpolation with the standard $L^\infty_t L^2_x$-estimate.
\begin{lemma}
\label{est_loc_pq}
Let $(p,q) \in (2,\infty] \times  [2, + \infty)$ and $k \in \Z + \frac12$. Let $I = \big [\tfrac12,1 \big ]$ and $g_k \in L^2_{\rho d\rho}((0,+\infty))$ such that $\supp(g_k) \subset I$. Then
\[
\Big \lVert \mathcal P_k^{-1} \big [ e^{it \rho \sigma_3} g_k(\rho) \big ](r) \Big \rVert_{L^p_t L^q_{rdr} ([R,2R])} \le C \lVert g_k(\rho) \rVert_{L^2_{\rho d\rho} (I)} \times 
\begin{cases}
R^{\gamma - \frac12 + \frac2q}, & \quad R \le 1 \\ R^{\frac 1q + \delta(p)  ( 1 - \frac2q )}, & \quad R \ge 2,
\end{cases}
\]
where 
\[
\quad \delta(p) = \begin{cases} \frac 1p - \frac12 &\, \text{ if } p < 4,\\ \frac1{4\epsilon} - \frac12 &\, \text{ if } p \ge 4
\end{cases}
\]
and the constant $C$ is independent of $k$, $\nu$, but eventually depends on $p$ and $q$. Moreover, if $q = +\infty$, for all $p >2$ we have
\[
\Big \lVert \mathcal P_k^{-1} \big [ e^{it \rho \sigma_3} g_k(\rho) \big ](r) \Big \rVert_{L^p_t L^\infty_{dr} ([R,2R])} \le C \lVert g_k(\rho) \rVert_{L^2_{\rho d\rho} (I)} \times 
\begin{cases}
R^{\gamma - 1}, & \quad R \le 1 \\ R^{ \delta(p)}, & \quad R \ge 2,
\end{cases}
\]
\end{lemma}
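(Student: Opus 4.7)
The plan is to start from the integral representation provided by \Cref{properties_Pk},
\[
T(t,r):=\mathcal P_k^{-1}\bigl[e^{it\rho\sigma_3}g_k(\rho)\bigr](r)=\int_{1/2}^{1}H_k^*(r\rho)\,e^{it\rho\sigma_3}\,g_k(\rho)\,\rho\,d\rho ,
\]
which, by the block structure of $H_k^*$ and the identity in \Cref{rmk:gen_eig_neg}, decomposes into a finite sum of scalar oscillatory integrals of the form $\int_{1/2}^{1}\psi_k(\pm r\rho)\,e^{\pm it\rho}\,g_k^\pm(\rho)\,\rho\,d\rho$, each one a Fourier integral in $\rho$ with amplitude compactly supported in $I$.

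My strategy is to establish two time-endpoint estimates and interpolate between them. Plancherel in $t$ applied componentwise gives
\[
\lVert T(\cdot,r)\rVert_{L^2_t}^2\lesssim\int_{1/2}^{1}\bigl(\lvert\psi_k(r\rho)\rvert^2+\lvert\psi_k(-r\rho)\rvert^2\bigr)\lvert g_k(\rho)\rvert^2\rho^2\,d\rho,
\]
while Cauchy--Schwarz in $\rho$ together with the compactness of $I$ yields
\[
\lVert T(\cdot,r)\rVert_{L^\infty_t}^2\lesssim\Bigl(\int_{1/2}^{1}\bigl(\lvert\psi_k(r\rho)\rvert^2+\lvert\psi_k(-r\rho)\rvert^2\bigr)\rho^2\,d\rho\Bigr)\lVert g_k\rVert_{L^2_{\rho d\rho}(I)}^2 .
\]
Taking the $L^q_{rdr}([R,2R])$-norm on both sides and exchanging the order of integration via Minkowski (valid because $q\ge 2$), both bounds reduce to $\lVert g_k\rVert_{L^2_{\rho d\rho}(I)}$ times the scalar quantity
\[
\sup_{\rho\in I}\lVert\psi_k(\pm r\rho)\rVert_{L^q_{rdr}([R,2R])},
\]
which via the change of variable $s=r\rho$ becomes (up to $\rho$-uniform constants) $\lVert\psi_k\rVert_{L^q_{sds}([R',2R'])}$ with $R'\simeq R$.

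This scalar quantity is then bounded using the pointwise estimates of \Cref{thm:point_est}, splitting according to the ratio $R/\lvert k\rvert$. When $R\le 1$ every term is in the small-argument regime of \eqref{est_eig}: either $\lvert k\rvert$ is large so that $e^{-D\lvert k\rvert}$ ensures $k$-uniform summability, or $\lvert k\rvert$ is moderate, in which case direct integration of $(r\rho/2)^{\gamma-1/2}$ on $[R,2R]$ yields $R^{\gamma-1/2+2/q}$, the worst case $\lvert k\rvert=\tfrac12$ corresponding to $\gamma=\gamma_{1/2}$. When $R\ge 2$, three subregimes compete: for $R\ge 2\lvert k\rvert$ the bound $\lvert\psi_k\rvert\lesssim r^{-1/2}$ integrates cleanly, for $R\le\lvert k\rvert/2$ the exponential decay $e^{-D\lvert k\rvert}$ dominates, and in the transition subregime $R\simeq\lvert k\rvert$ one must integrate the Airy-type profile $\lvert k\rvert^{-1/4}\bigl(\bigl\lvert\lvert k\rvert-r\bigr\rvert+\lvert k\rvert^{1/3}\bigr)^{-1/4}$.

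Finally I would interpolate between the $L^2_t$ and $L^\infty_t$ bounds to reach $L^p_t$ for every $p\in(2,\infty]$. For $p<4$ the interpolation is clean and produces $\delta(p)=\tfrac1p-\tfrac12$, while for $p\ge 4$ the middle-regime integral $\int_{R}^{2R}\bigl(\bigl\lvert\lvert k\rvert-r\bigr\rvert+\lvert k\rvert^{1/3}\bigr)^{-q/4}r\,dr$ sits at the boundary of integrability (logarithmic at $q=4$), forcing a small $\epsilon$-loss in which the endpoint $4$ is replaced by $4\epsilon$; this accounts for the expression $\delta(p)=\tfrac{1}{4\epsilon}-\tfrac12$. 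The $q=+\infty$ case is treated analogously, replacing the Minkowski step with the pointwise supremum of $\psi_k$ on $[R,2R]$. The main technical obstacle is the transition subregime $R\simeq\lvert k\rvert$ for $R\ge 2$: extracting the correct power of $R$ uniformly in $k$ from the Airy-type integral and threading it through the interpolation is the step that creates the dichotomy at $p=4$ and, together with summability over dyadic $R$-ranges, ultimately selects the admissibility range declared in \Cref{thm:Stri_2D}.
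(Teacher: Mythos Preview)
Your scheme for $R\ge 2$ has a structural gap. As you yourself observe, both your $L^2_t$ bound (Plancherel) and your $L^\infty_t$ bound (Cauchy--Schwarz in $\rho$) reduce, after the $L^q_{rdr}$-step, to the \emph{same} quantity $\lVert g_k\rVert_{L^2_{\rho d\rho}}\cdot\sup_{\rho\in I}\lVert\psi_k(\cdot\,\rho)\rVert_{L^q_{rdr}([R,2R])}$. Interpolating between two identical endpoints cannot manufacture any $p$-dependence: your method yields a $p$-independent bound governed by $\beta(q)$ from \Cref{est_psi_R}, not the stated $R^{1/q+\delta(p)(1-2/q)}$. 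Your identification of the $p=4$ threshold with the Airy-type $r$-integral is also misplaced: that integral is the $q=4$ threshold in $\beta(q)$ for $\lVert\psi_k\rVert_{L^q}$, a $q$-phenomenon unrelated to $p$ in your setup.

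The paper produces $\delta(p)$ by a different route. For $R\ge 2$ it interpolates in $q$ (not in $p$) between $L^p_tL^2_{rdr}$ ($\lesssim R^{1/2}$) and $L^p_tL^\infty_r$ ($\lesssim R^{\delta(p)}$). The $p$-dependence enters at the $L^\infty_r$ endpoint: one first reaches the diagonal $L^p_tL^p_{dr}$ norm via the Hausdorff--Young inequality in $t$ (turning $L^{p'}_\rho$ into $L^p_t$ --- this is the step beyond Plancherel), then upgrades $L^p_r$ to $L^\infty_r$ through the Sobolev embedding $W^{1,p}([R,2R])\hookrightarrow L^\infty$, which requires both $\lVert\psi_k\rVert_{L^p_{dr}}$ and $\lVert\psi_k'\rVert_{L^p_{dr}}$ from \Cref{est_psi_R}. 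Because this forces the spatial exponent to equal $p$, the $q=4$ threshold of $\beta(q)$ becomes a $p=4$ threshold; for $p\ge 4$ one instead embeds through $W^{1,4\epsilon}$ with $\epsilon$ just below $1$, and that is the actual source of the $\epsilon$-loss. Your proposal omits both Hausdorff--Young and the derivative/Sobolev-embedding machinery, and those are precisely what generate $\delta(p)$.
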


\emph{Step 1}: By relying on the pointwise estimates of \Cref{thm:point_est}, we estimate the $L^q$ norms of the generalized eigenfunctions on a fixed interval of length $R$;
\begin{lemma}
\label{est_psi_R}
\label{est_R-2R}
Let $k \in \Z + \tfrac12$, $\gamma = \sqrt{k^2 - \nu^2}$, $\lvert \nu \rvert \le \frac12$ and $q \in [2, + \infty]$ . Then, the following estimates hold
\[
\big \lVert \psi_k \big \rVert_{L^q ([R,2R])} \le C \times \begin{cases}  R^{\gamma + \frac1q - \frac12}, &\quad \text{ if } R \le 1, \\ 
R^{\beta (q)}, &\quad \text{ if } R \ge 1,
\end{cases}
\]
and
\[
\big \lVert \psi ' _k \big \rVert_{L^q ([R,2R])} \le C \times \begin{cases}
R^{\gamma + \frac 1q - \frac 32}, &\quad \text{ if } R \le 1, \\
R^{\beta(q)}, &\quad \text{ if } R \ge 1,
\end{cases}
\]
where 
\[
\beta(q) = \begin{cases}
\frac1q - \frac 12 &\quad \text{ if } q \in [2,4),\\
\frac1q - \frac13 &\quad \text{ if } q \in [4, + \infty]
\end{cases}
\]
and all the constants are independent of $\gamma, k$, but eventually dependent on $q$.
\end{lemma}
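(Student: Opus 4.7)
The plan is to integrate the pointwise bounds of \Cref{thm:point_est} over $[R, 2R]$, splitting the interval (if necessary) according to which of the three regimes in \eqref{est_eig} applies. Specializing to $n = 2$, these regimes for $\lvert \psi_k(r) \rvert$ are: (i) a near-origin regime on $(0, \max\{\lvert k \rvert / 2, 2\}]$ with bound $(\min\{r/2, 1\})^{\gamma - 1/2} e^{-D \lvert k \rvert}$ (exponentially decaying in $\lvert k \rvert$); (ii) a transition regime on $[\lvert k \rvert / 2, 2 \lvert k \rvert]$ of Airy type, with bound $\lvert k \rvert^{-1/4} (\lvert \lvert k \rvert - r \rvert + \lvert k \rvert^{1/3})^{-1/4}$; and (iii) a decay regime $r^{-1/2}$ for $r \ge 2 \lvert k \rvert$.

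For $R \le 1$, the interval $[R, 2R] \subset (0, 2]$ lies entirely in regime (i) with $\min\{r/2, 1\} = r/2$, so a direct computation gives
\[
\lVert \psi_k \rVert_{L^q([R, 2R])}^q \lesssim e^{-qD \lvert k \rvert} \int_R^{2R} (r/2)^{q(\gamma - 1/2)} dr \simeq e^{-qD \lvert k \rvert} R^{q(\gamma - 1/2) + 1},
\]
yielding the claimed exponent $\gamma - 1/2 + 1/q$ after absorbing the (uniformly bounded in $k$) exponential factor. The corresponding bound for $\psi'_k$ follows identically, with $\gamma - 3/2$ in place of $\gamma - 1/2$ coming from the derivative bound in regime (i) of \Cref{thm:point_est}.

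For $R \ge 1$ I split $[R, 2R]$ into up to three sub-intervals and apply the bound of the appropriate regime on each. If $\lvert k \rvert \ge 4R$ the whole interval is in regime (i) with $\min\{r/2, 1\} \gtrsim 1$, so $\lvert \psi_k(r) \rvert \lesssim e^{-D \lvert k \rvert}$, and $e^{-D \lvert k \rvert} R^{1/q} \lesssim R^{\beta(q)}$ uniformly since $\lvert k \rvert \gtrsim R$ makes the exponential dominate any polynomial factor. If $R \ge 4 \lvert k \rvert$ the interval is in regime (iii) and $\int_R^{2R} r^{-q/2} dr \simeq R^{1 - q/2}$ gives $\lVert \psi_k \rVert_{L^q} \lesssim R^{1/q - 1/2} \le R^{\beta(q)}$. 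In the remaining transitional range $\lvert k \rvert / 4 \le R \le 4 \lvert k \rvert$ (where $\lvert k \rvert \simeq R$), the portion of $[R, 2R]$ inside $[\lvert k \rvert / 2, 2 \lvert k \rvert]$ contributes
\[
\lvert k \rvert^{-q/4} \int (\lvert \lvert k \rvert - r \rvert + \lvert k \rvert^{1/3})^{-q/4} dr,
\]
which by a direct evaluation yields $R^{1 - q/2}$ for $q \in [2, 4)$ (where the singular part $\lvert \lvert k \rvert - r \rvert^{-q/4}$ is integrable and dominates) and $R^{1/3 - q/3}$ for $q \in (4, \infty]$ (where the Airy cutoff scale $\lvert k \rvert^{1/3}$ around $r = \lvert k \rvert$ is the relevant one); a borderline logarithm at $q = 4$ is absorbed by adjusting constants. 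Taking $q$-th roots gives $R^{1/q - 1/2}$ and $R^{1/(3q) - 1/3} \le R^{\beta(q)}$ respectively.

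For $\psi'_k$, the pointwise bounds in regimes (ii) and (iii) coincide with those for $\psi_k$, so the $R \ge 1$ analysis is unchanged; only regime (i) shifts the exponent from $\gamma - 1/2$ to $\gamma - 3/2$, as already handled in the $R \le 1$ case. The main technical point, and the place where care is required, is the transition regime: the interplay between the two length scales $\lvert \lvert k \rvert - r \rvert$ and $\lvert k \rvert^{1/3}$ produces precisely the breakpoint $q = 4$ in $\beta(q)$, characteristic of Airy-type asymptotics. All constants in the estimates are uniform in $k$ and $\nu$ (depending at most on $q$), as required.
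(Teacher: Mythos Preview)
Your proof is correct and follows essentially the same route as the paper's: integrate the pointwise bounds of \Cref{thm:point_est} over $[R,2R]$, splitting according to the three regimes, with the Airy zone producing the $q=4$ breakpoint in $\beta(q)$. Two small remarks: in your transitional range $\lvert k \rvert/4 \le R \le 4\lvert k \rvert$ you only discuss the Airy portion and should say explicitly that any pieces of $[R,2R]$ lying in regimes (i) or (iii) are handled exactly as in your other two cases (the paper avoids this by decomposing $[R,2R]=I_1\cup I_2\cup I_3$ directly); and for $q\ge 4$ on $I_2$ the paper uses the cruder uniform bound $(\lvert\lvert k\rvert-r\rvert+\lvert k\rvert^{1/3})^{-q/4}\le \lvert k\rvert^{-q/12}$, which gives $R^{1/q-1/3}$ directly and sidesteps the borderline logarithm at $q=4$, whereas your sharper estimate $R^{1/(3q)-1/3}$ is of course also sufficient.
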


\begin{proof}
We split the proof in two cases:
\begin{enumerate}[i)]
\item $R \le 1$:\\
Let $q \in [2 , + \infty)$; observing that $R \le r \le 2R$ implies $r \le 2$, we have
\[
\int_R^{2R} \lvert \psi_k (r) \rvert^q dr  \lesssim \int_R^{2R} (\min \{\tfrac{r}2, 1 \}) ^{q \gamma - \frac{q}2} e^{-Dq \lvert k \rvert} dr \lesssim \int_R^{2R} r^{q \gamma - \frac{q}2} dr  \lesssim R^{q\gamma - \frac{q}2 +1},
\]
and
\[
\int_R^{2R} \lvert \psi'_k(r) \rvert^q dr \lesssim \int_R^{2R} (\min \{\tfrac{r}2, 1 \}) ^{q \gamma - \frac32 q} e^{-Dq \lvert k \rvert} dr \lesssim \int_R^{2R} r^{q \gamma - \frac32 q} dr  \lesssim  R^{q\gamma - \frac32 q +1}.
\]
Now let $q = + \infty$. Then, as before,
\[
\sup_{r \in [R,2R]} \lvert \psi_k (r) \lvert \lesssim \sup_{r \in [R,2R]} r^{\gamma - \frac12} 2^{- \gamma + \frac12} \le 2^{\frac12} R^{\gamma -\frac12}.
\]
In the same way we estimate $\lVert \psi ' \rVert_{L^\infty}.$\\
\item $R \ge 1$:\\
We write the interval of integration as $[ R, 2R ] = I_1 + I_2 + I_3$, where
\[
I_1 = [ R, 2R ] \cap \big [ 0 , \tfrac{\lvert k \rvert}2 \big ], \quad I_2 = [ R, 2R ] \cap \big [ \tfrac{\lvert k \rvert}2, 2\lvert k \rvert \big ], \quad I_3 = [ R, 2R ] \cap [ 2 \lvert k \rvert, + \infty)
\]
and we estimate each inteval separately.\\
For $I_1$ we can assume $2R \le \lvert k \rvert$, otherwise $I_1 = \emptyset$. 
Let $ q \in [2, + \infty)$, then 
\[
\int_{I_1} \lvert \psi_k (r) \rvert^q dr \lesssim \int_{I_1} (\min \{\tfrac{r}2, 1 \} )^{q \gamma - \frac{q}2} e^{-Dq \lvert k \rvert} dr \le C_{\alpha} R^{- \alpha}, \quad \forall \, \alpha >0;
\]
in fact, if $\min \{ \tfrac{r}2, 1 \} =1$, then
\[
\int_{I_1} \lvert \psi_k (r) \rvert^q dr \lesssim e^{-Dq \lvert k \rvert} \lvert k \rvert^{\alpha} \lvert k \rvert^{-\alpha} \le C_{\alpha} R^{-\alpha},
\]
otherwise $r \le 2$ and, again 
\[
\int_{I_1} \lvert \psi_k (r) \rvert^q dr \lesssim \int_R^{2R} r^{- \frac{q}2} e^{-Dq r} dr \le C_{\alpha} R^{-\alpha};
\]
Let now $q = + \infty$, then
\[
\lvert \psi_k (r) \rvert \lesssim 
\begin{cases}
e^{-DR} & \quad \text{ if } r \ge 2,\\
r^{- \frac12} 2^{\frac12} e^{-DR} &\quad \text{ if } r \le 2
\end{cases}
\]
and we get the claim.\\
For $I_2$ we can assume $\tfrac{\lvert k \rvert}4 \le R \le 2 \lvert k \rvert$, otherwise $I_2 = \emptyset$. Let $q \in [2,4)$, we compute the integral and obtain
\[
\begin{split}
\int_{I_2} \lvert \psi_k (r) \rvert^q dr&  \lesssim  \int_{\frac{\lvert k \rvert}2}^{2\lvert k \rvert} \lvert k \rvert^{-\frac{q}4}\big ( \big \lvert  \lvert k \rvert - r \big \rvert + \lvert k \rvert ^{\frac13} \big ) ^{- \frac q4} dr \\
& = \lvert k \rvert^{1 - \frac q4} \int_{\frac12}^2 \big ( \lvert k \rvert^{\frac13} ( \lvert k \rvert^{\frac23} \lvert 1 - \tau \rvert + 1 ) \big )^{- \frac q4} d\tau \\
& = \lvert k \rvert^{1 - \frac q4 - \frac q{12}} \int_{-\frac12}^2 \big ( \lvert k \rvert^{\frac23} \lvert y \rvert +1 \big )^{-\frac q4} dy = \lvert k \rvert^{1 - \frac q4 - \frac q{12} - \frac23} \int_{- \frac{\lvert k \rvert^{\frac23}}2}^{\lvert k \rvert^{\frac23}} \big ( 1 + \lvert x \rvert )^{-\frac q4} dx \\ 
& = \lvert k \rvert^{\frac 13 - \frac q3} \frac 4{4-q} \bigg [ \big ( 1 + \tfrac{\lvert k \rvert^{\frac23}}2 \big )^{1 - \frac q4} + \big ( 1 + \lvert k \rvert^{\frac23} \big )^{1 - \frac q4} -2 \bigg ] \\
& \lesssim \lvert k \rvert^{\frac13- \frac q3 + \frac23 - \frac q6} = \lvert k \rvert^{1 - \frac q2} \simeq R^{1 - \frac q2} .
\end{split}
\]
For $q \in [4, + \infty)$ we estimate the norm as
 \begin{equation}
 \label{est_I2}
 \int_{I_2} \lvert \psi_k(r) \rvert^q dr  \lesssim \int_{I_2} \lvert k \rvert^{- \frac34 q} \big ( \big \lvert  \lvert k \rvert - r \big \rvert + \lvert k \rvert ^{\frac13} \big ) ^{- \frac q4} r^{\frac q2} dr \lesssim \lvert k \rvert^{- \frac56q} \int_R^{2R} r^{\frac{q}2} \lesssim R^{1 - \frac13 q}.
 \end{equation}
 It remains to estimate the $L^\infty$ norm, for which is enough to observe that, if $r \in I_2$
 \[
 \lvert \psi_k(r) \rvert \lesssim \lvert k \rvert^{- \frac14 - \frac1{12}} \simeq R^{- \frac13}.
 \]
 Lastly, for $I_3$ we get 
 \[
 \int_{I_3} \lvert \psi_k (r) \rvert^q dr \lesssim \int_R^{2R} r^{- \frac q2} dr \lesssim R^{ 1 - \frac q2 }
 \]
 and
 \[
 \sup_{r \in [R,2R]} \lvert \psi_k(r) \rvert \lesssim R^{-\frac12}.
 \]
Similar computations bring to the estimates of $\lVert \psi'_k \rVert_{L^q ([R,2R])}$ for $R \ge 1$.
\end{enumerate}
\end{proof}

\emph{Step 2}: We can now provide an estimate for the localized solution on a fixed angular level $k$ and with the radius $r$ varying on a fixed interval of length $R$.\\\\
In order to lighten the notation, in the following we will write $e^{it\rho}$ instead of $e^{it\sigma_3 \rho}$ since all the estimates are to be understood on every component of the functions with values on $\C^N$. Moreover, we observe that all the estimates hold also for the generalized eigenfunctions corresponding to negative energies.

\begin{proof}[Proof of \Cref{est_loc_pq}]
\begin{enumerate}[i)]
\item $R \le 1$: \\
Let $p \in [2, + \infty]$ and $\Omega$ be an interval. Then by the embedding $ \dot H^{\frac12- \frac1q} (\Omega) \hookrightarrow L^q (\Omega)$, that holds for every $q \in [2, +\infty)$, 
and interpolation we have
\[
\begin{split}
& \bigg \lVert \int_0^{\infty} e^{it\rho} \psi_k (r\rho) g_k(\rho) \rho d\rho \bigg \rVert_{L^p_t L^q_{dr} ([R,2R])}  \\
& \lesssim \bigg \lVert \int_0^{\infty} e^{it\rho} \psi_k (r\rho) g_k(\rho) \rho d\rho \bigg \rVert_{L^p_t \dot H^{\frac12 - \frac1q} ([R,2R])} \\
& \lesssim \bigg \lVert \int_0^{\infty} e^{it\rho} \psi_k (r\rho) g_k(\rho) \rho d\rho \bigg \rVert_{L^p_t L^2_{dr} ([R,2R])}^{\frac12 + \frac1q} \times \bigg \lVert \int_0^{\infty} e^{it\rho} \psi_k (r\rho) g_k(\rho) \rho d\rho \bigg \rVert_{L^p_t \dot H^1 ([R,2R])}^{\frac12- \frac1q} \\
& \lesssim R^{\gamma - \frac12 + \frac1q} \lVert g_k (\rho) \rVert_{L^{p'}_{\rho d\rho}(I)}.
\end{split}
\]
Where the last inequality comes from Minkowski and Hausdorff-Young inequalities and Lemma \ref{est_psi_R}\footnote{Notice that since $\rho \in [\tfrac12, 1]$ then $\rho R \le 1$ and $\rho^{\gamma} \le 1$.}:
\[
\begin{split}
 \bigg \lVert \int_0^{\infty} e^{it\rho} \psi_k(r\rho) g_k(\rho) \rho d\rho \bigg \rVert_{L^p_t L^2_{dr} ([R,2R])} & \lesssim \lVert \psi_k(r\rho) g_k(\rho) \rVert_{L^{p'}_{d\rho}(I) L^2_{dr}([R,2R])} \\
& \lesssim \Big \lVert \lvert g_k(\rho) \rvert  \lVert \psi_k \rVert_{L^2_{dr} [R\rho, 2R\rho]} \Big \rVert _{L^{p'}_{d\rho}(I)}\\
&\lesssim R^{\gamma} \lVert g_k(\rho) \rVert_{L^{p'}_{\rho d\rho}(I)}, 
\end{split}
\]
and
\[
\begin{split}
\bigg \lVert \int_0^{\infty} e^{it\rho} \psi_k '(r\rho) g_k(\rho) \rho d\rho \bigg \rVert_{L^p_t L^2_{dr} ([R,2R])} & \lesssim \lVert \psi '_k(r\rho) g_k(\rho) \rVert_{L^{p'}_{\rho d\rho}(I) L^2_{dr}([R,2R])} \\
&  \lesssim \Big \lVert \lvert g_k(\rho) \rvert \lVert \psi'_k \rVert_{L^2_{dr} [R\rho, 2R\rho]} \Big \rVert _{L^{p'}_{\rho d\rho}(I)}\\
& \lesssim R^{\gamma -1 } \lVert g_k(\rho) \rVert_{L^{p'}_{\rho d\rho}(I)}.
\end{split}
\]
Therefore we obtain
\[
\begin{split}
& \bigg \lVert \int_0^{\infty} e^{it\rho} \psi_k '(r\rho) g_k(\rho) \rho d\rho \bigg \rVert_{L^p_t L^q_{rdr} ([R,2R])} \lesssim R^{\gamma - \frac12+ \frac2q}  \lVert g_k(\rho) \rVert_{L^2_{\rho d\rho}(I)}.
\end{split}
\]
\item $R \ge 2$:\\
Let $p \in [2, + \infty]$. We prove the following estimates
\begin{equation}
\label{LpLinf}
\bigg \lVert \int_0^{\infty} e^{it\rho} \psi_k(r\rho) g_k(\rho) \rho d\rho \bigg \rVert_{L^p_t L^{\infty}_{rdr}([R,2R])} \lesssim R^{\delta (p)} \lVert g_k(\rho) \rVert_{L^2_{\rho d\rho} (I)}
\end{equation}
and 
\begin{equation}
\label{LpL2}
\bigg \lVert \int_0^{\infty} e^{it\rho} \psi_k(r\rho) g_k(\rho) \rho d\rho \bigg \rVert_{L^p_t L^2_{r dr} ([R,2R])} \lesssim R^{\frac12} \lVert g_k (\rho)\rVert_{L^2_{\rho d\rho}(I)},
\end{equation}
then the result will follow by interpolation.
To estimate \eqref{LpLinf} we need to split the cases $p \in [2, 4)$ and $p \in [4,+ \infty]$.\\
If $p < 4 $ we observe that, from Minkowski and Hausdorff-Young inequalities and Lemma \ref{est_psi_R}, we have\footnote{We observe that since $\rho \in [\tfrac12, 1]$ then $R \rho \ge 1$.}
\[
\begin{split}
& \bigg \lVert \int_0^{+\infty} e^{it\rho} \psi_k(r\rho) g_k(\rho) \rho d\rho \bigg \rVert_{L^p_t L^p_{dr}([R,2R])} = \big \lVert \mathcal F_{t \mapsto \rho} \big ( \psi_k(r \rho) g_k(\rho) \rho \chi_{\R^+} (\rho) \big ) \big \rVert_{L^p_{dr} ([R,2R]) L^p_t} \\
& \lesssim \bigg \lVert \big \lVert \psi_k(r\rho) g_k (\rho) \big \rVert _{L^{p'}_{d\rho} (I)}  \bigg \rVert _{L^p_{dr} ([R,2R])}\\
& \lesssim \bigg ( \int_I \lvert g_k (\rho) \rvert^{p'} \bigg ( \int_R^{2R} \lvert \psi_k(r\rho) \rvert^p dr \bigg )^{\frac1p} d\rho \bigg )^{\frac1{p'}} \lesssim R^{\frac1p - \frac12} \lVert g_k(\rho) \rVert_{L^{p'}_{\rho d\rho} (I)}.
\end{split}
\]
The same holds also for $\psi ' _k(r \rho)$. Then, from the embedding $W^{1,p}(\Omega) \hookrightarrow L^\infty(\Omega)$, we get 
\[
\begin{split}
& \bigg \lVert \int_0^{\infty} e^{it\rho} \psi_k(r\rho) g_k(\rho) \rho d\rho \bigg \rVert_{L^p_t L^{\infty}_{rdr}([R,2R])} = \bigg \lVert \int_0^{\infty} e^{it\rho} \psi_k(r\rho) g_k(\rho) \rho d\rho \bigg \rVert_{L^p_t L^{\infty}_{dr}([R,2R])} \\
& \lesssim \bigg ( \bigg \lVert \int_0^{\infty} e^{it\rho} \psi_k(r\rho) g_k(\rho) \rho d\rho \bigg \rVert_{L^p_t L^p_{dr}([R,2R])}^p + \bigg \lVert \int_0^{\infty} e^{it\rho} \psi '_k(r\rho) g_k(\rho) \rho d\rho \bigg \rVert_{L^p_t L^p_{dr}([R,2R])}^p \bigg )^{\frac1p}\\
& \lesssim R^{\frac1p - \frac12} \lVert g_k(\rho) \rVert_{L^2_{\rho d\rho}(I)}.
\end{split}
\]
If $p \ge 4$, we use the embedding $W^{1,\tilde q}(\Omega) \hookrightarrow L^\infty(\Omega)$, $\tilde q = 4\epsilon$ with $\epsilon 
\sim 1$\footnote{This means that the next estimates hold for any $\epsilon \in \big (\frac 12,1 \big)$ but we want $\epsilon$ near 1.} and proceeding as before we get
\[
\bigg \lVert \int_0^{\infty} e^{it\rho} \psi_k(r\rho) g_k(\rho) \rho d\rho \bigg \rVert_{L^p_t L^{\infty}_{rdr}([R,2R])} \\
\lesssim R^{\frac1{4\epsilon} - \frac12} \lVert g_k(\rho) \rVert_{L^2_{\rho d\rho}(I)}.
\]
The estimate \eqref{LpL2} comes easily observing that $R\rho \ge 1$ then from Lemma \ref{est_psi_R} we have
\[
\bigg ( \int_R^{2R} \lvert \psi_k (r \rho) \rvert ^2 r dr \bigg )^{\frac12} \le C R^{\frac12}.
\]
\end{enumerate}
Lastly, we can estimate the $L^\infty_{dr}([R,2R])$ norm as
\[
\begin{split}
\bigg \lVert \int_0^{\infty} e^{it\rho} \psi_k (r\rho) g_k(\rho) \rho d\rho \bigg \rVert_{L^p_t L^\infty_{dr} ([R,2R])} \lesssim R^{\gamma -1} \lVert g_k(\rho) \rVert_{L^2_{\rho d\rho} (I)}, \quad \text{ for } R \le 1,\\
\bigg \lVert \int_0^{\infty} e^{it\rho} \psi_k(r\rho) g_k(\rho) \rho d\rho \bigg \rVert_{L^p_t L^{\infty}_{rdr}([R,2R])} \lesssim R^{\delta (p)} \lVert g_k(\rho) \rVert_{L^2_{\rho d\rho} (I)}, \quad \text{ for } R \ge 2,
\end{split}
\]
where for the former estimate we use the embedding $H^1(\Omega) \hookrightarrow L^\infty(\Omega)$. 
\end{proof}
\emph{Step 3}: We remove assumptions on the localizations  on the solution in order to get the generalized Strichartz estimates with the indexes $p,q$ in the range of admissibility given by the following
\begin{lemma}
\label{true_str_loc_data}
Let 
\[
p \in (2, + \infty], \qquad 6 < q < \frac2{\frac12 - \gamma_{\frac12}}, \qquad \frac1q + \bigg (\frac1p - \frac12\bigg)\bigg(1 - \frac2q\bigg) < 0.
\]
Then
\[
\lVert e^{it \D_{\nu}} u_0 \rVert_{L^p_t L^q _{r dr} L^2_{\theta}} \le C \lVert u_0 \rVert_{\dot H^s_{\D_{\nu}}}
\]
where $s= 1 - \frac1p - \frac2q$ and $\gamma_{\frac12} = \sqrt{\frac14 - \nu^2}$. Moreover, if $u_0$ is Dirac-non radial we can take $q \in (6, + \infty]$. 
\end{lemma}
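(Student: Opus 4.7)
The strategy combines the partial-wave representation \eqref{formula:decom_norm_mix} with a double dyadic decomposition (one in the Hankel frequency $\rho$ and one in the radial variable $r$) and exploits the scaling symmetry of \eqref{eq:DC} to reduce the proof to the per-shell bound already obtained in Lemma \ref{est_loc_pq}.

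By \eqref{formula:decom_norm_mix} and the fact that $\mathcal P_k$ intertwines $\lvert\D_\nu\rvert^s$ with multiplication by $\lvert\rho\rvert^s$ (\Cref{properties_Pk}), one has the norm equivalence
\[
\|u_0\|_{\dot H^s_{\D_\nu}}^2 \;\sim\; \sum_k \|\rho^s g_k\|_{L^2_{\rho d\rho}}^2, \qquad g_k := \mathcal P_k u_{0,k},
\]
so the problem reduces to proving, uniformly in $k\in \Z+\tfrac12$,
\[
\big\|\mathcal P_k^{-1}\big[e^{it\rho\sigma_3} g_k(\rho)\big](r)\big\|_{L^p_t L^q_{rdr}} \;\lesssim\; \|\rho^s g_k\|_{L^2_{\rho d\rho}}.
\]
Decompose $g_k = \sum_{j\in\Z} g_k^{(j)}$ with $g_k^{(j)}$ supported in a Hankel dyadic annulus $\rho\sim 2^j$, and set $\varphi_k^{(j)} := \mathcal P_k^{-1}[e^{it\rho\sigma_3} g_k^{(j)}]$. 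The dyadic pieces are orthogonal in the Hankel sense, and a Littlewood--Paley square-function argument on each partial-wave subspace (legitimate because $\mathcal P_k$ is an $L^2$ isometry that diagonalises $\D_\nu$) reduces the task to the frequency-dyadic estimate
\[
\|\varphi_k^{(j)}\|_{L^p_t L^q_{rdr}} \;\lesssim\; 2^{js}\,\|g_k^{(j)}\|_{L^2_{\rho d\rho}}, \qquad s = 1 - \tfrac1p - \tfrac2q,
\]
uniformly in $j$ and $k$.

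To obtain this dyadic bound, apply the scaling $u(t,r)\mapsto u(\lambda^{-1}t,\lambda^{-1}r)$ with $\lambda = 2^j$ (which leaves \eqref{eq:DC} invariant) together with the homogeneity $\psi_k(\rho r) = \psi_k(\lambda\tilde\rho\,r)$ of the generalised eigenfunctions (\Cref{rmk:gen_eig_neg}), which normalises the data to support in $\tilde\rho\in[\tfrac12,1]$. Split the radial axis into dyadic shells $[R,2R]$; by Minkowski if $p\ge q$ or by subadditivity of $x\mapsto x^{p/q}$ if $p<q$, the global $L^p_t L^q_{rdr}$-norm is controlled by the $\ell^{\min(p,q)}$-sum of the per-shell norms. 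Lemma \ref{est_loc_pq} applied to the rescaled shell $[2^j R,2^{j+1}R]$, together with the scaling factor, yields the per-shell bound $2^{js}\|g_k^{(j)}\|_{L^2}\cdot (2^jR)^{\gamma-\frac12+\frac2q}$ for $2^jR\le 1$ and $2^{js}\|g_k^{(j)}\|_{L^2}\cdot (2^jR)^{\frac1q+\delta(p)(1-\frac2q)}$ for $2^jR\ge 2$. The dyadic sum in $M=2^jR$ then converges precisely when $\gamma-\tfrac12+\tfrac2q>0$ and $\tfrac1q+\delta(p)(1-\tfrac2q)<0$: the first is ensured, uniformly in $k$, by $q<q_c=2/(\tfrac12-\gamma_{1/2})$ since $\gamma=\sqrt{k^2-\nu^2}\ge\gamma_{1/2}$, while the second is the stated admissibility.

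The main obstacle is the sharpness of the small-$M$ condition at the first partial wave $\lvert k\rvert=\tfrac12$, where $\gamma=\gamma_{1/2}$ is minimal; this is what forces the upper bound $q<q_c$. When $u_0$ is Dirac-non radial this partial wave is excluded and $\gamma\ge\sqrt{9/4-\nu^2}>1$ for $\lvert\nu\rvert\le\tfrac12$, so the small-$M$ summability condition becomes vacuous for every $q<+\infty$ (and still holds at $q=+\infty$ by the $L^\infty_{dr}$ endpoint of Lemma \ref{est_loc_pq}); only the large-$M$ admissibility survives, giving the enlarged range $q\in(6,+\infty]$.
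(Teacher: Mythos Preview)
Your overall plan mirrors the paper's: partial-wave reduction via \eqref{formula:decom_norm_mix}, a double dyadic decomposition in Hankel frequency and radius, scaling to unit frequency, and then Lemma~\ref{est_loc_pq} on each shell, with the same two summability conditions emerging. The gap is in how you recombine the frequency-dyadic pieces $\varphi_k^{(j)}$. You invoke ``a Littlewood--Paley square-function argument on each partial-wave subspace (legitimate because $\mathcal P_k$ is an $L^2$ isometry that diagonalises $\D_\nu$)'' to pass from $\big\|\sum_j \varphi_k^{(j)}\big\|_{L^p_t L^q_{rdr}}$ to $\big(\sum_j \|\varphi_k^{(j)}\|_{L^p_t L^q_{rdr}}^2\big)^{1/2}$. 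That justification does not stand: being an $L^2$-isometry gives orthogonality of the $\varphi_k^{(j)}$ in $L^2$, but says nothing about square-function bounds in $L^q_{rdr}$ for $q\ne 2$. A genuine Littlewood--Paley inequality for the spectral projections of $\D_\nu$ in $L^q$ would require Mikhlin-type or Calder\'on--Zygmund estimates for the multipliers $\mathcal P_k^{-1}\phi(\rho/2^j)\mathcal P_k$, and none is cited or proved here. Without this step your argument collapses, since the triangle inequality alone leaves an $\ell^1_j$-expression $\sum_j 2^{js}\|g_k^{(j)}\|_{L^2}$, which does not control $\|\rho^s g_k\|_{L^2}$.

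The paper sidesteps Littlewood--Paley entirely. Rather than first summing in $R$ for each fixed $j$ and then trying to square-sum in $j$, it keeps both dyadic sums together: after $\ell^2\hookrightarrow\ell^q$ on the $R$-sum and Minkowski (valid because $p\ge 2$), it uses only the triangle inequality on the $N$-sum \emph{inside} the $\ell^2_R$-sum, arriving at
\[
\sum_{R\in 2^{\Z}}\Big(\sum_{N\in 2^{\Z}} Q(NR)\,A_{N,k}\Big)^2,\qquad A_{N,k}=N^{s}\big\|(\mathcal P_k u_{0,k})\,\phi(\tfrac{\rho}{N})\big\|_{L^2_{\rho d\rho}}.
\]
Because the kernel depends only on the product $NR$ and satisfies $\sup_R\sum_N Q(NR)+\sup_N\sum_R Q(NR)<\infty$ precisely under the two admissibility inequalities, the Schur test converts this into $\sum_N A_{N,k}^2$, which is exactly the $\dot H^s_{\D_\nu}$-norm on the $k$-th partial wave. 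This coupling of the two dyadic scales through the Schur test is the device that replaces the square-function step in your outline; you should substitute it for the Littlewood--Paley appeal.
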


We remove the assumption of the frequency localization of the solution, to get the Strichartz estimates for $(p,q)$ admissible for \Cref{true_str_loc_data}; 
Thanks to \eqref{formula:decom_norm_mix}, it suffices to show that 
\[
\sum_{k \in \Z + \frac12} \Big \lVert \mathcal P_k^{-1} [ e^{it\rho \sigma_3} (\mathcal P_k u_{0,k}) (\rho) ](r) \Big \rVert_{L^p_t L^q_{rdr}} ^2 \le C \lVert u_0 \rVert_{\dot H^s_{\D_\nu}}^2.
\]
Let $N,R$ be dyadic numbers and $\phi \in C_c^{\infty} \big ( \big [\tfrac12 ,1 \big]\big )$ such that $\sum_{N \in 2^\Z} \phi (\rho N^{-1}) = 1$. Then, for $q < + \infty$, from the embedding $l^2 \hookrightarrow l^q$ and scaling we have the following
\begin{gather}
\begin{split}
& \sum_{k \in \Z + \frac12} \Big \lVert \mathcal P_k^{-1} [e^{-it\rho \sigma_3} (\mathcal P_k u_{0,k})(\rho) ] (r) \Big \rVert_{L^p_t L^q_{r dr}}^2 \\
& = \sum_{k \in \Z + \frac12} \Big \lVert \sum_{N \in 2^{\Z}} \mathcal P_k^{-1} [ e^{-it\rho \sigma_3} (\mathcal P_k u_{0,k})(\rho) \phi(\tfrac{\rho}N) ] (r) \Big \rVert_{L^p_t L^q _{rdr}}^2 \\
& = \sum_{k \in \Z + \frac12} \Big \lVert \Big ( \sum_{R \in 2^{\Z}} \Big \lVert  \sum_{N \in 2^{\Z}} \mathcal P_k^{-1} [ e^{-it\rho \sigma_3} (\mathcal P_k u_{0,k})(\rho) \phi(\tfrac{\rho}N) ] (r) \Big \rVert_{L^q_{rdr}([R,2R])} ^q \Big )^{\frac1q} \Big \rVert_{L^p_t}^2\\ 
& \lesssim  \sum_{k \in \Z + \frac12} \sum_{R \in 2^{\Z}} \Big \lVert  \sum_{N \in 2^{\Z}} \mathcal P_k^{-1} [ e^{-it\rho \sigma_3} (\mathcal P_k u_{0,k})(\rho) \phi(\tfrac{\rho}N) ] (r) \Big \rVert_{L^p_t L^q _{rdr}([ R,2R])}^2 \\
& \lesssim \sum_{k \in \Z + \frac12} \sum_{R \in 2^{\Z}} \Big ( \sum_{N \in 2^{\Z}} \Big \lVert \mathcal P_k^{-1} [ e^{-it\rho \sigma_3} (\mathcal P_k u_{0,k})(\rho) \phi(\tfrac{\rho}N) ] (r) \Big \rVert_{L^p_t L^q_{r dr}([R,2R])} \Big )^2 \\
& = \sum_{k \in \Z + \frac12} \sum_{R \in 2^{\Z}} \Big ( \sum_{N \in 2^{\Z}} N^{2 - \frac1p - \frac2q} \Big \lVert P_k^{-1} [ e^{-it\rho \sigma_3} (\mathcal P_k u_{0,k})(N\rho) \phi(\rho) ] (r) \Big \rVert_{L^p_t L^q_{rdr} ([NR,2NR])} \Big )^2.
\end{split}
\end{gather}
Moreover, from \Cref{est_loc_pq} we can continue the chain of inequalities and we get 
\[
\begin{split}
& \lesssim \sum_{k \in \Z + \frac12} \sum_{R \in 2^{\Z}} \Big ( \sum_{N \in 2^{\Z}} N^{2 - \frac1p - \frac2q} Q(NR) \big \lVert (\mathcal P_k u_{0,k}) (N\rho) \phi(\rho) \big \rVert_{L^2_{\rho d\rho}} \Big )^2 \\
& = \sum_{k \in \Z + \frac12} \sum_{R \in 2^{\Z}} \Big ( \sum_{N \in 2^{\Z}} N^{1 - \frac1p - \frac2q} Q(NR) \big  \lVert (\mathcal P_k u_{0,k}) (\rho) \phi(\tfrac{\rho}N) \big \rVert_{L^2_{\rho d\rho}} \Big )^2
\end{split}
\]
where
\[
Q(NR) =
\begin{cases}
 (NR)^{\gamma - \frac12 + \frac2q} & \quad \text{if } NR \le 1,\\
(NR)^{\frac1q + \delta(p)( 1- \frac2q)} & \quad \text{if } NR \ge 2.
 \end{cases}
 \]
We note that if we take $p,q$ such that
\begin{equation}
\label{formula:cond_gamma}
\gamma - \frac12 + \frac2q > 0, \quad \frac1q + \delta(p) \big( 1 - \frac2q) < 0,
\end{equation}
then
\[
\sup_{R \in 2^{\Z}} \sum_{N \in 2^{\Z}} Q(NR) < + \infty, \quad \sup_{N \in 2^{\Z}} \sum_{R \in 2^{\Z}} Q(NR) < + \infty.
\]
Recall that $\gamma = \sqrt{k^2 - \nu^2} \ge \sqrt2$ if $\lvert k \rvert > \frac12$. Then, the first condition in \eqref{formula:cond_gamma} becomes: $\gamma_{\frac12} - \frac12 + \frac2q > 0$.\\
Let $A_{N,k} = N^{1 - \frac1p- \frac2q} \big \lVert (\mathcal P_k u_{0,k}) (\rho) \phi(\tfrac{\rho}N) \big \rVert_{L^2_{\rho d\rho}}$, we use the Schur test Lemma:
\[
\bigg ( \sum_{R \in 2^{\Z}} \big ( \sum_{N \in 2^{\Z}} Q(NR) A_{N,k} \big )^2 \bigg )^{\frac12} = \sup_{\lVert B_R \rVert_{l^2} \le 1} \sum_{R \in 2^{\Z}} \sum_{N \in 2^{\Z}} Q(NR) A_{N,k} B_R
\]
which is bounded by
\[
\begin{split}
& \le C \bigg ( \sum_{R \in 2^{\Z}} \sum_{N \in 2^{\Z}} Q(NR) \lvert A_{N,k} \rvert^2 \bigg)^{\frac12} \bigg (  \sum_{R \in 2^{\Z}} \sum_{N \in 2^{\Z}} Q(NR) \lvert B_R \rvert ^2 \bigg )^{\frac12} \\
& \le C \big ( \sup_{R \in 2^{\Z}} \sum_{N \in 2^{\Z}} Q(NR) \sup_{N \in 2^{\Z}} \sum_{R \in 2^{\Z}} Q(NR) \big )^{\frac12} \bigg ( \sum_{N \in 2^{\Z}} \lvert A_{N,k} \rvert^2 \bigg )^{\frac12} \bigg ( \sum_{R \in 2^{\Z}} \lvert B_R \rvert^2 \bigg )^{\frac12} \\
& \le C \bigg ( \sum_{N \in 2^{\Z}} \lvert A_{N,k} \rvert ^2 \bigg )^{\frac12}.
\end{split}
\]
Putting the estimates together, we have obtained
\[
\begin{split}
\lVert e^{it \D_{\nu}} u_0 \rVert^2_{L^p_t L^q_{r dr}L^2_{\theta}} & \le C \sum_{k \in \Z + \frac12} \sum_{R \in 2^{\Z}} \Big ( \sum_{N \in 2^{\Z}} Q(NR) A_{N,k} \Big )^2 \\
& \le C \sum_{k \in \Z + \frac12} \sum_{N \in 2^{\Z}} N^{2 \big (1 - \frac1p - \frac 2q\big)} \big \lVert (\mathcal P_k u_{0,k}) (\rho) \phi(\tfrac{\rho}N) \big \rVert_{L^2_{\rho d\rho}}^2  \\
& \le C \lVert u_0 \rVert _{\dot H ^{1-\frac1p - \frac2q}_{\D_{\nu}}}^2.
\end{split}
\]
Let us now suppose that $u_0$ is Dirac-non radial, that is $\mathcal P_k u_{0,k} = 0$ for $\lvert k \rvert = \frac12$. Then the first condition in \eqref{formula:cond_gamma} is satisfied for all $q \le + \infty$. Then, the previous estimates prove the claim for any $q \in (6, + \infty)$. If $q= + \infty$, we argue as before; we take $N,R$ dyadic numbers and $\phi \in C_c^{\infty} \big ( \big [\tfrac12 ,1 \big]\big )$ such that $\sum_{N \in 2^\Z} \phi (\rho N^{-1}) = 1$. By the immersion $l^2 \hookrightarrow l^\infty$, the Minkowski's inequality and scaling we have 
\begin{equation}
\begin{split}
& \sum_{k \in \Z + \frac12} \Big \lVert \mathcal P_k^{-1} [ e^{it\rho \sigma_3} (\mathcal P_k u_{0,k}) (\rho) ](r) \Big \rVert_{L^p_t L^\infty_{dr}} ^2 \\
& \lesssim \sum_{k \in \Z + \frac12} \Big \lVert \sum_{N \in 2^{\Z}} \mathcal P_k^{-1} [ e^{it\rho \sigma_3} (\mathcal P_k u_{0,k}) (\rho)  \phi(\tfrac{\rho}N) ](r) \Big \rVert_{L^p_t L^\infty_{dr}} ^2 \\
& \lesssim \sum_{k \in \Z + \frac12} \bigg \lVert \sup_{R \in 2^{\Z}} \Big \lVert  \sum_{N \in 2^{\Z}} \mathcal P_k^{-1} [ e^{it\rho \sigma_3} (\mathcal P_k u_{0,k}) (\rho) \phi(\tfrac{\rho}N)](r) \Big \rVert_{L^\infty_{dr}([ R,2R])} \bigg \rVert_{L^p_t}^2\\
&\lesssim  \sum_{k \in \Z + \frac12} \bigg \lVert \bigg ( \sum_{R \in 2^\Z} \Big \lVert \sum_{N \in 2^{\Z}} \mathcal P_k^{-1} [ e^{it\rho \sigma_3} (\mathcal P_k u_{0,k}) (\rho) \phi(\tfrac{\rho}N) ](r) \Big \rVert^2_{L^\infty_{dr}[R,2R]} \bigg )^{\frac12} \bigg \rVert_{L^p_t} ^2\\
& \lesssim \sum_{k \in \Z + \frac12} \sum_{R \in 2^{\Z}} \bigg ( \sum_{N \in 2^{\Z}} \Big \lVert \mathcal P_k^{-1} [ e^{it\rho \sigma_3} (\mathcal P_k u_{0,k}) (\rho) \phi(\tfrac{\rho}N) ](r) \Big \rVert_{L^p_t L^\infty_{dr} ([R,2R])} \bigg )^2 \\
& = \sum_{k \in \Z + \frac12} \sum_{R \in 2^{\Z}} \Big ( \sum_{N \in 2^{\Z}} N^{2 - \frac1p } \Big \lVert P_k^{-1} [ e^{it\rho \sigma_3} (\mathcal P_k u_{0,k})(N\rho) \phi(\rho) ] (r) \Big \rVert_{L^p_t L^\infty_{dr} ([NR,2NR])} \Big )^2.
\end{split}
\end{equation}
for every $p > 2$. Then, we conclude using the Schur test Lemma as before. \\
\begin{figure}
\centering
\begin{tikzpicture}[scale=1.5, >=latex]
\draw[->] (-.25,0) -- (3,0) coordinate (x axis); \node[below] at (3,0) {$\frac1p$};
\draw[->] (0,-.25) -- (0,3) coordinate (y axis); \node[left] at (0,3) {$\frac1q$};
\draw[thick,dotted] (0,2) -- (1.856,0.134);
\draw[thick, densely dashed, blue] (0,0.134) -- ((1.856, 0.134);
\draw[thick, blue] (0,0.667) -- (0,0.134);
\draw[thick, densely dashed, blue] (1,0.667) -- (0,0.667);
\fill (1,0.667) circle (0.8pt);
\fill (1.856,0) circle (0.5pt) node[below] {\footnotesize $\frac1{p_c}$};
\draw (0,0) circle (0.8pt) node[below left] {\footnotesize $0$};
\draw (2,0) circle (0.8pt) node[below] {\footnotesize $\frac12$};
\draw (0,0.134) circle (0.8pt) node[left] {\footnotesize $\frac1{q_c}$};
\draw (0,0.667) circle (0.8pt) node[left] {\footnotesize $\frac1{6}$};
\fill (1,0) circle (0.5pt) node[below] {\footnotesize $\frac14$};
\draw (1.856, 0.134) circle (0.8pt);
\fill (0,2) circle (0.8pt) node[left] {$\frac12$};
\draw (0.15,2.15) node {\tiny{A}};
\draw (0.15,0.27) node {\tiny{B}};
\draw (1.9,0.27) node {\tiny{C}};
\draw (1.25,1.80) node { \scriptsize$\frac2q + \frac{p_c}q \big ( 1 - \frac2{q_c}\big ) =1$};
\draw[thick, densely dashed, color=blue, domain=1:1.856, smooth, variable=\x]   plot (\x,{((-2)*\x + 4)/((-\x) +4)});
\draw (2.8,0.5) node [blue]{ \tiny$\frac1q + \big (\frac1p - \frac12 \big ) \big (1 - \frac2q \big) = 0 $};
\end{tikzpicture}
\caption{case $n=2$, $\nu = \frac14$.} 
\label{tri}
\end{figure}
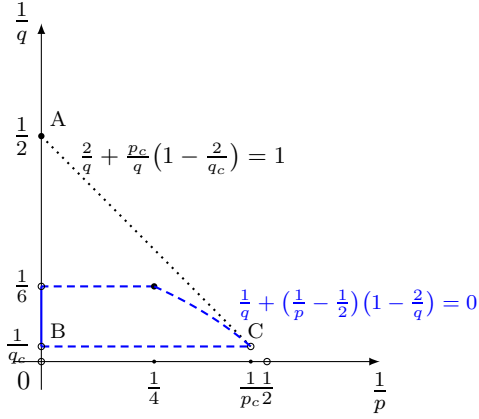

\emph{Step 4}: We observe that the set of admissible exponents is not empty if and only if 
\[
6 < \frac2{\frac12 - \gamma_{\frac12}} \quad \iff \lvert \nu \rvert <  \frac{\sqrt 2}3.
\] 
In this case, we have the validity of the Strichartz estimates in the set with blue boundary described in \Cref{tri}, where we define $q_c \coloneqq \frac2{\frac12 - \gamma_{\frac12}}$ and the corresponding endpoint index satisfying \eqref{formula:cond_gamma} as $p_c$, that is $p_c \coloneqq \frac{\gamma_\frac12 + \frac12}{\gamma_\frac12}$. 
Then, we can interpolate between the estimate in \Cref{true_str_loc_data} with $(p, q_c)$, $ p \in (p_c,+ \infty]$ and the standard estimate
\[
\lVert e^{it\D_{\nu}} u_0 \rVert_{L^\infty_t L^2_{rdr} L^2_\theta} \le C \lVert u_0 \rVert_{L^2}
\]
to reach the triangle $\widehat{ABC}$ (\Cref{tri}). Lastly, the estimates for Dirac-non radial solutions follows from the same argument.
\end{proof}
\subsubsection{Strichartz estimates 3D case}
We now discuss the 3-dimensional case. We retrace the proof of the 2D case. For the sake of brevity we omit the proofs of the first three steps since the computations are basically the same.  \\
\emph{Step 1}:
\begin{lemma}
\label{est_psi_R_3d}
Let $k \in \Z^*$, $\gamma = \sqrt{k^2 - \nu^2}$, $\lvert \nu \rvert \le 1$ and $q \in [2,+\infty]$. Then, the following estimates hold
\[
\big \lVert \psi_k \big \rVert_{L^q ([R,2R])} \le C \times \begin{cases}  R^{\gamma + \frac1q - 1}, &\quad \text{ if } R \le 1, \\ 
R^{\beta (q)}, &\quad \text{ if } R \ge 1,
\end{cases}
\]
and
\[
\big \lVert \psi ' _k \big \rVert_{L^q ([R,2R])} \le C \times \begin{cases}
R^{\gamma + \frac 1q - 2}, &\quad R \le 1, \\
R^{\beta(q)}, &\quad R \ge 1,
\end{cases}
\]
where 
\[
\beta(q) = \begin{cases}
\frac1q - 1&\quad \text{ if } q \in [2,4),\\
\frac1q - \frac56 &\quad \text{ if } q \in [4, + \infty]
\end{cases}
\]
and all the constants are independent of $\gamma, k$, but eventually dependent on $q$.
\end{lemma}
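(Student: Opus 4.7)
The plan is to mirror the proof of Lemma \ref{est_psi_R} (the 2D case) step by step, replacing the pointwise bounds from Theorem \ref{thm:point_est} with their $n=3$ counterparts. The key observation is that the exponents $(2n-3)/4$ and $(n-1)/2$ appearing in Theorem \ref{thm:point_est} become $3/4$ and $1$ respectively when $n=3$, so every regime of the 2D proof has a predictable analogue. The factor $\gamma -\tfrac{n-1}{2}$ in the small-$r$ bound changes from $\gamma-\tfrac{1}{2}$ to $\gamma-1$, the decay $r^{-(n-1)/2}$ for large $r$ becomes $r^{-1}$, and the Airy-type bound carries the factor $|k|^{-3/4}$ in place of $|k|^{-1/4}$.

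First I would treat the case $R\le 1$. On $[R,2R]$ one has $r\le 2$, and the pointwise bound $|\psi_k(r)|\lesssim (\min\{r/2,1\})^{\gamma-1}e^{-D|k|}\lesssim r^{\gamma-1}$ gives, for $q\in[2,+\infty)$,
\[
\int_R^{2R} |\psi_k(r)|^q\, dr\lesssim \int_R^{2R} r^{q(\gamma-1)}\, dr \lesssim R^{q(\gamma-1)+1},
\]
which is the stated bound $R^{\gamma-1+1/q}$; the case $q=+\infty$ is immediate, and $\psi_k'$ is handled identically with $\gamma-1$ replaced by $\gamma-2$.

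Next, in the regime $R\ge 1$, I would split $[R,2R]=I_1\cup I_2\cup I_3$ with $I_1=[R,2R]\cap[0,|k|/2]$, $I_2=[R,2R]\cap[|k|/2,2|k|]$, $I_3=[R,2R]\cap[2|k|,+\infty)$, and estimate on each piece separately. On $I_1$, the exponential decay $e^{-D|k|}$ combined with $|k|\gtrsim R$ provides faster-than-polynomial decay in $R$. On $I_3$, the bound $|\psi_k(r)|\lesssim r^{-1}$ yields $\int_R^{2R}r^{-q}\,dr\lesssim R^{1-q}$ and hence $L^q$-norm bounded by $R^{1/q-1}$, which matches both branches of $\beta(q)$. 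The main work (and the main obstacle) lies on $I_2$, where the Airy-type bound $|k|^{-3/4}(||k|-r|+|k|^{1/3})^{-1/4}$ is in force and we must split on $q$. For $q\in[2,4)$ the substitution $u=||k|-r|$ reduces the inner integral to $\int_0^{|k|}(u+|k|^{1/3})^{-q/4}\,du\sim |k|^{1-q/4}$, so that, using $|k|\sim R$,
\[
\int_{I_2}|\psi_k(r)|^q\, dr\lesssim |k|^{-3q/4}\cdot|k|^{1-q/4}\sim R^{1-q},
\]
giving the $L^q$-norm bound $R^{1/q-1}$. For $q\in[4,+\infty]$, I would use the uniform bound $(u+|k|^{1/3})^{-q/4}\le |k|^{-q/12}$ together with $r\sim|k|\sim R$ to obtain
\[
\int_{I_2}|\psi_k(r)|^q\, dr\lesssim |k|^{-3q/4-q/12}\int_R^{2R} r^{q/2}\,dr\lesssim R^{-5q/6+q/2+1}=R^{1-q/3},
\]
and taking the $q$-th root gives the sharper exponent $R^{1/q-5/6}$; for $q=+\infty$ the pointwise bound $|\psi_k|\lesssim|k|^{-3/4-1/12}\simeq R^{-5/6}$ is used directly.

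Taking the worst exponent on each of $I_1,I_2,I_3$ produces exactly $\beta(q)$ as stated. The proof of the derivative bound is identical: the pointwise estimate in Theorem \ref{thm:point_est} for $\psi_k'$ coincides with that of $\psi_k$ on $I_2\cup I_3$, and differs only in the small-$r$ regime (where the exponent shifts from $\gamma-1$ to $\gamma-2$), so all calculations transfer verbatim. The main obstacle, as in the 2D proof, is organizing the sharp exponent on $I_2$ for the two ranges of $q$; everything else is straightforward bookkeeping of the $n=3$ exponents.
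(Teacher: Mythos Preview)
Your approach is exactly what the paper intends: it explicitly says the 3D proofs of Steps 1--3 are omitted because ``the computations are basically the same'' as in the 2D case, so mirroring Lemma \ref{est_psi_R} is precisely the right plan. Your treatment of $R\le 1$, of $I_1$ and $I_3$, and of $I_2$ for $q\in[2,4)$ is correct.

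There is, however, a genuine slip in your $I_2$ estimate for $q\ge 4$. You write
\[
\int_{I_2}|\psi_k(r)|^q\, dr\lesssim |k|^{-3q/4-q/12}\int_R^{2R} r^{q/2}\,dr,
\]
but the factor $r^{q/2}$ has no source in 3D. In the 2D computation \eqref{est_I2} the paper tacitly rewrites the pointwise factor $|k|^{-1/4}$ as $|k|^{-3/4}r^{1/2}$ (legitimate on $I_2$ since $r\sim|k|$), which is where the $r^{q/2}$ comes from. In 3D the pointwise bound from Theorem \ref{thm:point_est} already carries $|k|^{-3/4}$, so that rewriting trick is not available; inserting $r^{q/2}$ just throws away a factor $|k|^{q/2}$. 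Your displayed inequality therefore gives only $R^{1-q/3}$, whose $q$-th root is $R^{1/q-1/3}$, \emph{not} the claimed $R^{1/q-5/6}$ --- the arithmetic in your last clause does not follow. The correct (and simpler) computation is
\[
\int_{I_2}|\psi_k(r)|^q\, dr\lesssim |k|^{-3q/4}\,|k|^{-q/12}\int_{I_2}dr\lesssim |k|^{1-5q/6}\sim R^{1-5q/6},
\]
which yields $\|\psi_k\|_{L^q(I_2)}\lesssim R^{1/q-5/6}$ as required. With this one-line fix your argument is complete.
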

\emph{Step 2}:
\begin{lemma}
Let $(p,q) \in [2, +\infty]\times [2,+\infty)$ and $k \in \Z^*$. Let $I = [\frac12,1]$ and $g_k \in L^2_{\rho^2 d\rho}((0,+\infty))$ such that $\supp(g_k) \subset I$. Then
\[
\Big \lVert \mathcal P_k^{-1} \big [ e^{it \rho \sigma_3} g_k(\rho) \big ](r) \Big \rVert_{L^p_t L^q_{r^2dr} ([R,2R])} \le C \lVert g_k(\rho) \rVert_{L^2_{\rho^2 d\rho} (I)} \times 
\begin{cases}
R^{\gamma - 1 + \frac3q}, & \quad R \le 1 \\ R^{\frac 1q + \delta(p)  ( 1 - \frac2q )}, & \quad R \ge 2,
\end{cases}
\]
where 
\[
\quad \delta(p) = \begin{cases} \frac 1p - 1 &\, \text{ if } p < 4,\\ \frac1{4\epsilon} - 1 &\, \text{ if } p \ge 4.
\end{cases}
\]
and the constant $C$ is independent of $k$, $\nu$, but eventually depends on $p$ and $q$. Moreover, if $q = +\infty$, for all $p \ge 2$ we have
\[
\Big \lVert \mathcal P_k^{-1} \big [ e^{it \rho \sigma_3} g_k(\rho) \big ](r) \Big \rVert_{L^p_t L^\infty_{dr} ([R,2R])} \le C \lVert g_k(\rho) \rVert_{L^2_{\rho d\rho} (I)} \times 
\begin{cases}
R^{\gamma - \frac32}, & \quad R \le 1 \\ R^{ \delta(p)}, & \quad R \ge 2,
\end{cases}
\]
\end{lemma}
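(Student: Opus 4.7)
The plan is to mirror the proof of the 2D analog \Cref{est_loc_pq} given in Step 2 of the preceding section, using the 3D pointwise bounds from \Cref{est_psi_R_3d} in place of their 2D counterparts and keeping careful track of the radial weight $r^2\,dr$. By item (iii) of \Cref{properties_Pk}, I start from the explicit representation
\[
f(t,r) := \mathcal P_k^{-1}\bigl[e^{it\rho\sigma_3} g_k(\rho)\bigr](r) = \int_0^{+\infty} e^{it\rho\sigma_3}\, \psi_k(r\rho)\, g_k(\rho)\,\rho^2\, d\rho,
\]
and reduce the Lemma to a mixed $L^p_t L^q_{r^2dr}([R,2R])$ estimate of this oscillatory integral, understood componentwise in $\C^N$ (the insensitivity to the sign of the energy is recorded in \Cref{rmk:gen_eig_neg}). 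I split the analysis into the regimes $R\le 1$ and $R\ge 2$.

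For the regime $R\le 1$ and finite $q$, I would apply the one-dimensional Sobolev embedding $\dot H^{\frac12-\frac1q}([R,2R])\hookrightarrow L^q([R,2R])$, followed by interpolation between $L^p_t L^2_{dr}$ and $L^p_t \dot H^1_{dr}$. Each endpoint is then controlled by Minkowski's inequality (valid for $p\ge 2$) together with Hausdorff--Young in $t$, reducing matters to $L^2_{dr}([R,2R])$-bounds on $\psi_k(r\rho)$ and $\psi_k'(r\rho)$. A change of variable $\xi=r\rho$ with $\rho\sim 1$ on $I$, together with the $R\le 1$ branch of \Cref{est_psi_R_3d}, yields $\|\psi_k(r\rho)\|_{L^2_{dr}([R,2R])}\lesssim R^{\gamma-\frac12}$ and $\|\psi_k'(r\rho)\|_{L^2_{dr}([R,2R])}\lesssim R^{\gamma-\frac32}$. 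Interpolating gives $R^{\gamma-1+\frac1q}$ in $L^q_{dr}$, and multiplying by the additional $R^{\frac2q}$ coming from the weight $r^2\sim R^2$ on $[R,2R]$ produces the announced exponent $\gamma-1+\frac3q$. For $q=+\infty$ I would replace the fractional Sobolev embedding by $H^1([R,2R])\hookrightarrow L^\infty([R,2R])$, which retains the dominant $R^{\gamma-\frac32}$ term from the derivative.

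For $R\ge 2$, I would prove the endpoint estimates $L^p_t L^\infty_{dr}$ and $L^p_t L^2_{dr}$ on $[R,2R]$ separately, and then interpolate. The $L^2$-endpoint follows from \Cref{est_psi_R_3d} with $q=2$ in the $R\rho\ge 1$ regime, giving $\|\psi_k(r\rho)\|_{L^2_{dr}([R,2R])}\lesssim R^{-\frac12}$, and hence $R^{\frac12}$ once the weight $r^2$ is absorbed. The $L^\infty$-endpoint splits into $p<4$ and $p\ge 4$: when $p<4$ I would use $W^{1,p}([R,2R])\hookrightarrow L^\infty([R,2R])$ and reduce to the diagonal $L^p_t L^p_{dr}$ bounds on $f$ and $\partial_r f$, estimated via Hausdorff--Young in $t$ and the $L^p_{dr}$ bounds of \Cref{est_psi_R_3d} in the $q<4$ branch; when $p\ge 4$ I would use instead the embedding $W^{1,4\epsilon}\hookrightarrow L^\infty$ with $\epsilon\sim 1$, which keeps us in the $q<4$ branch of $\beta$ and gives the sharper rate $R^{\frac1{4\epsilon}-1}$, preferable to the $R^{\frac1p-\frac56}$ available from the large-$q$ branch. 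In both sub-cases the endpoint rate is $R^{\delta(p)}$; interpolation with the $L^2$-endpoint yields the claimed $R^{\frac1q+\delta(p)(1-\frac2q)}$ for $q<+\infty$, while the $q=+\infty$ case is read directly from the $L^\infty$-endpoint.

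The main difficulty is not conceptual but technical bookkeeping: one must keep all constants uniform in $k$ and $\nu$, which is exactly the content of \Cref{est_psi_R_3d}, and carefully track the powers of $R$ arising both from the change of variable $\xi=r\rho$ (with $\rho$ localized on $I$) and from the passage between $L^q_{dr}$ and $L^q_{r^2 dr}$ on $[R,2R]$. No genuinely new idea is required: the 3D proof differs from the 2D one only in the exponents $\gamma+\frac1q-1$, $\gamma+\frac1q-2$, $\frac1q-1$, $\frac1q-\frac56$ coming from \Cref{est_psi_R_3d} replacing their 2D counterparts, and in the measure weight $R^{\frac2q}$ replacing $R^{\frac1q}$.
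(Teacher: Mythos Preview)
Your proposal is correct and follows exactly the approach the paper intends: the paper explicitly omits the proof of this 3D lemma, stating that ``the computations are basically the same'' as in the 2D case (\Cref{est_loc_pq}), and your write-up is precisely that adaptation, with the 3D exponents of \Cref{est_psi_R_3d} and the weight $r^2\,dr$ accounted for correctly at every step.
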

Notice that in this case we can take $p=2$ since $\delta(2) < 0$.\\
\emph{Step 3}:
\begin{lemma}
\label{Strlocdata3d}
Let 
\[
p \in [2, + \infty], \qquad \frac{10}3 < q < \frac3{1 - \gamma_1}, \qquad \frac1q + \bigg (\frac1p - 1\bigg)\bigg(1 - \frac2q\bigg) < 0.
\]
Then
\[
\lVert e^{it \D_{\nu}} u_0 \rVert_{L^p_t L^q _{r^2 dr} L^2_{\theta}} \le C \lVert u_0 \rVert_{\dot H^s_{\D_{\nu}}}
\]
where $s= \frac32 - \frac1p - \frac3q$ and $\gamma_1 = \sqrt{1 - \nu^2}$. Moreover, if $u_0$ is Dirac-non radial we can take $q \in \big ( \frac{10}3, + \infty \big]$.
\end{lemma}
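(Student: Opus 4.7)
My plan is to mirror the four-step strategy carried out for Lemma \ref{true_str_loc_data}, substituting the three-dimensional ingredients where needed. Starting from \eqref{formula:decom_norm_mix}, it suffices to control the square sum over $k \in \Z^*$ of the norms $\lVert \mathcal P_k^{-1}[e^{it\rho\sigma_3}(\mathcal P_k u_{0,k})(\rho)](r)\rVert_{L^p_t L^q_{r^2 dr}}$. I would insert a Littlewood-Paley resolution $1 = \sum_{N \in 2^\Z}\phi(\rho/N)$ in the spectral variable, with $\phi \in C_c^\infty([1/2,1])$, together with a dyadic decomposition of the radial variable into annuli $[R,2R]$.

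Using the embedding $\ell^2_R \hookrightarrow \ell^q_R$ (valid since $q\ge 2$) to bring the radial dyadic sum outside the $L^p_t L^q_{r^2 dr}$ norm, Minkowski to push the frequency sum past the norm, and then rescaling in $(\rho,t,r)$ to reduce the integration to $\rho \in [1/2,1]$, I would apply the three-dimensional frequency-localized estimate (the analogue of Lemma \ref{est_loc_pq}, i.e. Step 2 in the 3D case) to each dyadic piece. This produces the homogeneity factor $N^{3/2 - 1/p - 3/q}$ dictated by the massless Dirac scaling in $\R^3$, multiplied by
\[
Q(NR) = \begin{cases}(NR)^{\gamma - 1 + 3/q}, & NR \le 1,\\ (NR)^{1/q+\delta(p)(1-2/q)}, & NR \ge 2.\end{cases}
\]

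The heart of the argument is then a Schur-test bound on the bilinear form with kernel $Q(NR)$: one needs both $\sup_R \sum_N Q(NR)$ and $\sup_N \sum_R Q(NR)$ to be finite. This is exactly where the hypotheses enter. The condition $1/q + \delta(p)(1-2/q) < 0$ is assumed and governs the $NR\ge 2$ regime. The $NR\le 1$ regime requires $\gamma - 1 + 3/q > 0$ uniformly in $k$; since $\gamma = \sqrt{k^2-\nu^2} \ge \gamma_1$ for every $|k|\ge 1$, this is precisely the restriction $q < 3/(1-\gamma_1) = q_c$. Once Schur's test is applied, the isometry property of $\mathcal P_k$ (Proposition \ref{properties_Pk}) together with Plancherel on the frequency dyadic sum collapses the outcome to $\lVert u_0\rVert_{\dot H^s_{\D_\nu}}^2$, $s = 3/2 - 1/p - 3/q$, as desired.

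I expect the main technical obstacle to be keeping track of the uniformity in $k$ and $\nu$ through the chain of scalings, since the constant in the final estimate must not depend on the angular level; however this is already guaranteed by the $(k,\nu)$-uniformity explicitly stated in Theorem \ref{thm:point_est} and in the analogue of Lemma \ref{est_loc_pq}. The endpoint $q=+\infty$ is treated identically, replacing the first embedding by $\ell^2 \hookrightarrow \ell^\infty$ as in the 2D proof. Finally, in the Dirac-non radial case the partial waves with $|k|=1$ vanish, so the uniform lower bound on $\gamma$ improves to $\sqrt{4-\nu^2}\ge \sqrt 3$; the constraint $\gamma-1+3/q>0$ becomes vacuous for every $q\ge 2$ and $|\nu|\le 1$, extending the admissible range up to $q = +\infty$.
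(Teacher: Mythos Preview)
Your proposal is correct and follows precisely the route the paper takes: the paper's own proof of Lemma~\ref{Strlocdata3d} consists only of the sentence ``We proceed as in Lemma~\ref{true_str_loc_data}, with suitable modifications,'' and the steps you outline (reduction via \eqref{formula:decom_norm_mix}, double dyadic decomposition in $R$ and $N$, rescaling, application of the 3D Step~2 lemma, Schur test on $Q(NR)$, and collapse to the $\dot H^s_{\D_\nu}$ norm) are exactly those modifications. One small remark: when $p\ge 4$ the negativity of $1/q+\delta(p)(1-2/q)$ is not literally the assumed inequality (since $\delta(p)=1/(4\epsilon)-1$ rather than $1/p-1$); it is instead guaranteed by the hypothesis $q>10/3$ after choosing $\epsilon$ close to $1$, just as $q>6$ plays that role in the 2D proof.
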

\begin{proof}
Notice that it suffices to show that 
\[
\sum_{k \in \Z^*} \Big \lVert \mathcal P_k^{-1} [ e^{it\rho \sigma_3} (\mathcal P_k u_{0,k}) (\rho) ](r) \Big \rVert_{L^p_t L^q_{r^2dr}} ^2 \le C \lVert u_0 \rVert_{\dot H^s_{\D_\nu}}^2.
\]
We proceed as in \Cref{true_str_loc_data}, with suitable modifications.
\end{proof}
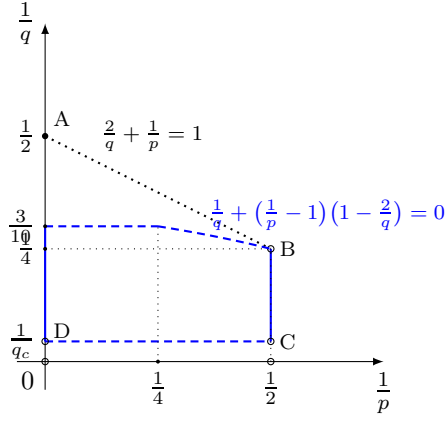
\begin{figure}
\centering
\begin{tikzpicture}[scale=1.5, >=latex]
\draw[->] (-.25,0) -- (3,0) coordinate (x axis); \node[below] at (3,0) {$\frac1p$};
\draw[->] (0,-.25) -- (0,3) coordinate (y axis); \node[left] at (0,3) {$\frac1q$};
\draw[thick, blue] (2,0.179) -- (2,1);
\draw[thick, densely dashed, blue] (0,0.179) -- (2, 0.179);
\draw[thick, blue] (0,1.2) -- (0,0.179);
\draw[thick, dotted] (0,2) -- (2,1);
\draw[dotted] (1,0) -- (1,1.2);
\draw[dotted](0,1) -- (2,1);
\draw[dotted] (2,0) -- (2,1);
\draw[thick,densely dashed, blue] (0,1.2) -- (1, 1.2);
\draw (0,0) circle (0.8pt) node[below left] {\footnotesize $0$};
\draw (2,0) circle (0.8pt) node[below] {\footnotesize $\frac12$};
\draw (0,0.179) circle (0.8pt) node[left] {\footnotesize $\frac1{q_c}$};
\draw (2, 0.179) circle (0.8pt);
\fill (0,1) circle (0.5pt) node [left] {\footnotesize $\frac14$};
\draw (2,1) circle (0.8pt);
\fill (0,1.2) circle (0.5pt) node [left] {\footnotesize $\frac3{10}$};
\fill (0,2) circle (0.8pt) node[left] {\footnotesize $\frac12$};
\fill (1,0) circle (0.5pt) node [below] {\footnotesize $\frac14$};
\draw[thick, densely dashed, color=blue, domain=1:2, smooth, variable=\x]   plot (\x,{((-2)*\x + 8)/((-\x) +6)});
\draw (0.15,2.15) node {\tiny{A}};
\draw (2.15,1) node {\tiny{B}};
\draw (2.15,0.179) node {\tiny{C}};
\draw (0.15, 0.27) node {\tiny D};
\draw (2.5,1.3) node [blue]{ \tiny$\frac1q + \big (\frac1p - 1 \big ) \big (1 - \frac2q \big) = 0 $};
\draw (0.95,2) node { \tiny$\frac2q + \frac1p=1 $};
\end{tikzpicture}
\caption{case $n=3$, $\nu = \frac12$.} 
\label{quad}
\end{figure}

\emph{Step 4}: In the last step we want to combine the previous estimate with the conservation of mass in order to get the Strichartz estimates in \Cref{thm:Stri_3D}. We notice that $q=4$ is admissible if and only if 
\[
4 < \frac3{1- \gamma_1} = q_c \iff \lvert \nu \rvert < \frac{\sqrt{15}}4.
\]
In this case, we have the validity of the Strichartz estimates in the set with blue boundary described in \Cref{quad}, 
Then we interpolate between the estimates in \Cref{Strlocdata3d} with exponent $(p,4)$, $p >2$ and the $L^\infty_t L^2_{r^2 dr} L^2_\theta$-estimate, widening the admissible set up to cover the region $ABCD$ in \Cref{quad}.

\section{Nonlinear application}
\label{sec:proof_nl_thm}
This Section is dedicated to the proof of the well-posedness results for the system \eqref{NL:conv} with Dirac-radial initial datum. We start by proving the classical Strichartz estimates for Dirac-radial functions (\Cref{prop:stri_rad} below) as well as two Lemmas that we will use throughout the proofs of the nonlinear results.
\begin{prop}
\label{prop:stri_rad}
Let $\lvert \nu \rvert < \frac{\sqrt{15}}4$ and  $(p,q)$ such that
\begin{equation}
\label{eq:adm_cond_Dir-rad}
2 \le p \le + \infty, \quad 2 \le q < q_c, \quad \frac2q + \frac1p < 1 \text{ or } (p,q) = (\infty,2),
\end{equation}
where $q_c = \frac3{1 - \sqrt{1 - \nu^2}}$. Then, there exists a constant $C > 0$ such that for any $u_0$ Dirac-radial, $u_0 \in \dot H^s(\R^3;\C^4)$, the following Strichartz estimates hold
\begin{equation}
\lVert e^{it \D_{\nu}} u_0 \rVert_{L^p_t L^q_x} \le C \lVert u_0 \rVert_{\dot H^s}
\end{equation}
provided $s= \frac32 - \frac1p - \frac3q$.
\end{prop}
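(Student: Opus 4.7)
The strategy is to reduce the claim to the mixed-norm Strichartz estimate already obtained in \Cref{thm:Stri_3D}. The crucial simplification provided by Dirac-radiality is that the angular content of both $u_0$ and the evolved solution lies in a \emph{finite-dimensional} subspace of $L^2(\mathbb{S}^2;\C^4)$, on which all $L^q$ norms are mutually equivalent; this is what allows us to replace the mixed norm $L^q_{r^2 dr}L^2_\theta$ by the plain $L^q_x$ norm.

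First, I would use the partial wave decomposition \eqref{form:decomp}. Since $u_0$ is Dirac-radial, only the terms with $k=\pm 1$ (and the associated finitely many $m_k$) contribute, and, because each partial wave subspace is invariant under $\D_\nu$, the evolved function $e^{it\D_\nu}u_0(t,r,\cdot)$ belongs, for every fixed $(t,r)$, to the finite-dimensional space $V\subset L^2(\mathbb{S}^2;\C^4)$ spanned by $\{\Xi^{\pm}_{\pm 1,m_k}\}$. On such a finite-dimensional space all $L^q(\mathbb{S}^2)$ norms are equivalent, with an absolute constant depending only on the fixed basis. Pointwise in $(t,r)$ this gives
\[
\big\| e^{it\D_\nu}u_0(t,r,\cdot)\big\|_{L^q_\theta}\simeq \big\| e^{it\D_\nu}u_0(t,r,\cdot)\big\|_{L^2_\theta},
\]
and integrating in $r$ and $t$ yields
\[
\big\| e^{it\D_\nu}u_0\big\|_{L^p_tL^q_x}\simeq \big\| e^{it\D_\nu}u_0\big\|_{L^p_tL^q_{r^2 dr}L^2_\theta}.
\]

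Second, the admissibility range \eqref{eq:adm_cond_Dir-rad} coincides with \eqref{adm_cond_3d}, so \Cref{thm:Stri_3D} applies and bounds the right-hand side by $C\|u_0\|_{\dot H^s_{\D_\nu}}$ with $s=\frac32-\frac1p-\frac3q$. To pass from the Dirac-Coulomb Sobolev norm to the standard one, I would invoke \Cref{lemma:equiv_3D}$(ii)$, which gives $\|u_0\|_{\dot H^s_{\D_\nu}}\lesssim \|u_0\|_{\dot H^s}$ for $s\in[0,1]$ and $|\nu|\le 1$. The endpoint $(p,q)=(\infty,2)$ corresponds to $s=0$ and reduces to conservation of mass.

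I do not expect a substantial obstacle: the finite dimensionality of the angular subspace delivers the transition from $L^q_\theta$ to $L^2_\theta$ for free, and the remaining steps recycle ingredients already available in the paper. The only mild point to check is that the value of $s=\frac32-\frac1p-\frac3q$ arising from the admissibility lies in the range where the norm equivalence of \Cref{lemma:equiv_3D}$(ii)$ applies; this is immediate for the exponents used in the nonlinear applications of Theorems \ref{thm:nla1} and \ref{thm:nla2}, where $s\le 1$ throughout.
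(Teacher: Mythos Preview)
Your proposal is correct and follows essentially the same route as the paper: both exploit that for Dirac-radial data the angular content lies in the finite-dimensional span of $\{\Xi^{\pm}_{\pm1,m_k}\}$, so the $L^q_\theta$ and $L^2_\theta$ norms are interchangeable (the paper phrases this as boundedness of $\lVert \Xi_k\rVert_{L^q_\theta}$), after which the radial Strichartz machinery applies. The only cosmetic difference is that you invoke \Cref{thm:Stri_3D} as a black box, while the paper re-enters the proof of \Cref{Strlocdata3d} on the finite sum directly and then interpolates; both arrive at the $\dot H^s_{\D_\nu}$ norm and need the same norm comparison (\Cref{lemma:equiv_3D}\,$(ii)$) to reach $\dot H^s$, a step you correctly flag as requiring $s\in[0,1]$.
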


\begin{proof}
Let $u_0$ be Dirac-radial. From \eqref{formula:decomp_sol} and observing that the functions $\Xi_k$ are bounded, we get
\[
\label{ineq:sum}
\begin{split}
\lVert u \rVert_{L^p_t L^q_x} & = \Big \lVert \sum_{\lvert k \rvert =1} \mathcal P_k^{-1} [ e^{it\rho \sigma_3} (\mathcal P_k u_{0,k}) (\rho) ](r) \cdot \Xi_k (\theta)\Big \rVert_{L^p_t L^q_{r^2dr} L^q_\theta}\\
& \le \sum_{\lvert k \rvert =1} \lVert \Xi_k \rVert_{L^q_\theta} \big \lVert  \mathcal P_k^{-1} [ e^{it\rho \sigma_3} (\mathcal P_k u_{0,k}) (\rho) ](r) \big \rVert_{L^p_t L^q_{r^2dr}} \\
& \lesssim \bigg ( \sum_{\lvert k \rvert =1} \big \lVert \mathcal P_k^{-1} [ e^{it\rho \sigma_3} (\mathcal P_k u_{0,k}) (\rho) ](r) \big \rVert_{L^p_t L^q_{r^2dr}}^2 \bigg )^{\frac12}.
\end{split}
\]
Then we can proceed as in the proof of \Cref{Strlocdata3d} and we conclude by interpolation with the $L^\infty_t L^2_x$ estimate.
\end{proof}

\begin{remark}
We observe that \Cref{prop:stri_rad} holds the same if the initial datum $u_0$ is such that, in the decomposition given by \eqref{formula:decomp_sol}, $u_{0,k} \ne 0$ only for a finite number of $k \in \Z^*$. In fact, inequality \eqref{ineq:sum} remains true if we sum over a finite number of indexes. Moreover, a similar result can be proved for the 2D case.
\end{remark}
\begin{remark}
\label{rmk:radiality}
We observe that $N(u) = \big ( \omega \ast \langle \beta u, u\rangle \big ) u$ preserves the Dirac-radiality of $u$. More precisely, it has been shown in \cite{Cac11} (Lemma 5.5) that if $u$ is a Dirac-radial function, then $\langle \beta u, u\rangle$ is a radial scalar function (in the classical sense). Moreover, we recall that $\omega$ is radial, then $ \omega \ast \langle \beta u, u\rangle$ is a radial function which implies that $N(u)$ is still Dirac-radial.
\end{remark}

\begin{lemma}
\label{lemma:Nu_Hs}
Let $u \in \mathcal S (\R^3;\C^4)$ and $\omega \in L^\gamma (\R^3) \cap L^\alpha (\R^3)$. Then, the following inequality holds
\begin{equation}
\label{Nu_Hs}
\big \lVert \big ( \omega \ast \langle \beta u, u \rangle \big ) u \big \rVert_{\dot H^s} \lesssim \lVert \omega \rVert_{L^{\gamma}} \lVert u \rVert_{\dot H^s} \lVert u \rVert_{L^{\mu_2}} \lVert u \rVert_{L^{p_2}} + \lVert \omega \rVert_{L^{\alpha}} \lVert u \rVert_{\dot H^s} \lVert u \rVert_{L^{2\beta}}^2,
\end{equation}
where $s \ge 0$, $\alpha, \beta, \gamma \in [1, + \infty]$, $p_2, \mu_2 \in (1, +\infty]$ such that
\[
\frac1\gamma + \frac1{p_2} + \frac1{\mu_2} = 1, \quad \frac1\alpha + \frac1\beta = 1.
\]
\end{lemma}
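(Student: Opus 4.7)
The plan is to prove this by applying the fractional Leibniz rule (Kato–Ponce inequality) to the product $f \cdot u$ where $f \coloneqq \omega \ast \langle \beta u, u\rangle$, and to treat the two resulting pieces separately. Thus I would write
\[
\lVert f u \rVert_{\dot H^s} \lesssim \lVert f \rVert_{L^\infty} \lVert |D|^s u \rVert_{L^2} + \lVert |D|^s f \rVert_{L^{q_1}} \lVert u \rVert_{L^{q_2}},
\]
with $\tfrac{1}{q_1} + \tfrac{1}{q_2} = \tfrac12$ to be chosen.

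For the first piece, I would apply Young's convolution inequality together with the Hölder conjugacy $\tfrac1\alpha + \tfrac1\beta = 1$ to obtain
\[
\lVert f \rVert_{L^\infty} = \lVert \omega \ast \langle \beta u, u\rangle \rVert_{L^\infty} \le \lVert \omega \rVert_{L^\alpha} \lVert \langle \beta u, u \rangle \rVert_{L^\beta} \lesssim \lVert \omega \rVert_{L^\alpha} \lVert u \rVert_{L^{2\beta}}^2,
\]
where the last inequality is pointwise Cauchy–Schwarz on $\C^4$ followed by Hölder on the scalar product $|u|^2$. This produces exactly the second summand on the right-hand side of \eqref{Nu_Hs}.

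For the second piece, I would push the derivative inside the convolution, since $|D|^s$ is a Fourier multiplier and therefore commutes with convolution against $\omega$: $|D|^s f = \omega \ast |D|^s \langle \beta u, u\rangle$. Applying Young's convolution inequality with an intermediate exponent $r$ satisfying $\tfrac1\gamma + \tfrac1r = 1 + \tfrac1{q_1}$ gives $\lVert |D|^s f \rVert_{L^{q_1}} \le \lVert \omega \rVert_{L^\gamma} \lVert |D|^s \langle \beta u, u \rangle \rVert_{L^r}$. Then another application of the bilinear fractional Leibniz (symmetry in the two copies of $u$ absorbs the conjugation and the matrix $\beta$) yields
\[
\lVert |D|^s \langle \beta u, u \rangle \rVert_{L^r} \lesssim \lVert u \rVert_{L^{p_2}} \lVert |D|^s u \rVert_{L^2},
\]
under the constraint $\tfrac1r = \tfrac1{p_2} + \tfrac12$.

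The final step is bookkeeping: combining the two Young/Leibniz constraints with $\tfrac{1}{q_1} + \tfrac{1}{q_2} = \tfrac12$ and setting $\mu_2 = q_2$ leads, after elimination of $q_1$ and $r$, precisely to $\tfrac1\gamma + \tfrac1{p_2} + \tfrac1{\mu_2} = 1$, which matches the hypothesis. Hence the second piece is bounded by $\lVert \omega \rVert_{L^\gamma} \lVert u \rVert_{L^{p_2}} \lVert u \rVert_{L^{\mu_2}} \lVert u \rVert_{\dot H^s}$, giving the first summand in \eqref{Nu_Hs}. The main technical point is justifying the use of Kato–Ponce and its bilinear version at the required exponents; both are standard for $s \ge 0$ with all Lebesgue exponents in $(1,\infty]$ (and $L^\infty$ endpoint on the non-derivative factor, which is the form we need). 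The only mild subtlety is the appearance of $p_2, \mu_2 \in (1,\infty]$ in the statement, which is exactly what guarantees the validity of each Hölder/Young step above.
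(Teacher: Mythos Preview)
Your argument is correct and essentially identical to the paper's own proof: both apply the Kato--Ponce fractional Leibniz rule to the product $(\omega \ast \langle \beta u,u\rangle)\,u$, then bound the $L^\infty$ piece by Young's inequality with exponents $(\alpha,\beta)$ and the derivative piece by commuting $|D|^s$ through the convolution, applying Young with $\gamma$, and using Leibniz once more on $\langle \beta u,u\rangle$. The only difference is cosmetic (your intermediate exponents $q_1,q_2,r$ play the roles of the paper's $p_1,\mu$, with the labels $p_2,\mu_2$ interchanged), and the index bookkeeping yields the same constraint $\tfrac1\gamma+\tfrac1{p_2}+\tfrac1{\mu_2}=1$.
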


\begin{proof}
 Let $D^s \coloneqq (- \Delta)^{\frac s2}$. In the following all the estimates have to be thought component by component and then summed back together. From generalized fractional Leibniz rule (see e.g. \cite{G014} Thm. 1),
\begin{equation}
\begin{split}
\Big \lVert \big (\omega \ast  \langle \beta u,u \rangle \big ) u \Big \lVert_{\dot H^s} & = \Big \lVert  D ^s \big [ \big ( \omega \ast  \langle \beta u,u \rangle \big ) u \big] \Big \lVert_{L^2} \\
& \lesssim \Big  \lVert D^s \big ( \omega \ast  \langle \beta u,u \rangle \big ) \Big \lVert_{L^{p_1}} \lVert u \rVert_{L^{p_2}} + \big \lVert \omega \ast  \langle \beta u,u \rangle \big \rVert_{L^\infty}  \big \lVert D^s u \big \rVert_{L^2},
\end{split}
\end{equation}
where
\[
\frac 12 = \frac 1{p_1} + \frac1{p_2}, \quad p_1 \in (1, + \infty), \, p_2 \in (1, + \infty]
\]
and $p_1 \in [1,+ \infty)$ if $s=0$.
We estimate separately:\\
\begin{itemize}
\item
Using Young's inequality
\[
\big \lVert \omega \ast  \langle \beta u,u \rangle \big \rVert_{L^\infty} \lesssim \lVert \omega \rVert_{L^{\alpha}} \big \lVert \langle \beta u,u \rangle \big \rVert_{L^{\beta}} \lesssim \lVert \omega \rVert_{L^{\alpha}} \lVert u \rVert_{L^{2\beta}}^2,
\]
where 
\[
1 = \frac1{\alpha} + \frac1{\beta}, \quad \alpha, \beta \in [1, \infty];
\]
\item Using Young's inequality and generalized Leibniz rule
\[
\begin{split}
\Big  \lVert D^s \big ( \omega \ast  \langle \beta u,u \rangle \big ) \Big \lVert_{L^{p_1}} & = \Big  \lVert \omega \ast D^s \langle \beta u,u \rangle \Big \lVert_{L^{p_1}} \lesssim \lVert \omega \rVert_{L^{\gamma}} \big \lVert D^s \langle \beta u,u \rangle \big \rVert_{L^{\mu}} \\
& \lesssim \lVert \omega \rVert_{L^{\gamma}} \big \lVert D^s u \big \rVert_{L^2} \lVert u \rVert_{L^{\mu_2}},
\end{split}
\]
where
\[
1 + \frac1{p_1} = \frac 1{\gamma} + \frac1{\mu}, \qquad \frac1{\mu} = \frac12 + \frac1{\mu_2}, \qquad \mu, \gamma \in [1, +\infty], \, \mu_2 \in (1, \infty].
\]
Summing up, we obtain the result.
\end{itemize}
\end{proof}

\begin{lemma}
\label{lemma:H_s-difference}
Let $u,v \in \mathcal S(\R^3;\C^4)$, then 
\[
\begin{split}
 \big \lVert D^s \big (\langle \beta u,u \rangle - \langle \beta v, v \rangle \big ) \big \rVert_{L^{\mu}} & \lesssim \big \lVert D ^s ( u - v) \big \rVert_{L^{\mu_1}} ( \lVert u \rVert_{L^{\mu_2}} + \lVert v \rVert_{L^{\mu_2}} )+ \\
 & + \big ( \big \lVert D^s  u \big \rVert_{L^{\gamma_1}} + \big \lVert D^s  v \big \rVert_{L^{\gamma_1}} \big ) \lVert u-v \rVert_{L^{\gamma_2}}
\end{split}
\]
where $s \ge 0$ and 
\[
\frac 1\mu = \frac 1\mu_1 + \frac1\mu_2 = \frac 1\gamma_1 + \frac1\gamma_2.
\]
\end{lemma}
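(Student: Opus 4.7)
The plan is to reduce the bilinear-like difference to a sum of two terms that are each linear in $u-v$, and then apply the fractional Leibniz rule (as used already in the proof of \Cref{lemma:Nu_Hs}) to each of them.

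More precisely, I would start from the algebraic identity
\[
\langle \beta u, u \rangle - \langle \beta v, v \rangle = \langle \beta u, u-v \rangle + \langle \beta (u-v), v \rangle,
\]
which is just ``add and subtract $\langle \beta u, v \rangle$''. Applying $D^s$, the triangle inequality in $L^\mu$ reduces the estimate to bounding
\[
\big \lVert D^s \langle \beta u, u-v\rangle \big \rVert_{L^\mu} \quad \text{and} \quad \big \lVert D^s \langle \beta (u-v), v\rangle \big \rVert_{L^\mu}.
\]

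On each of these I would invoke the generalized fractional Leibniz rule (the same tool cited in the proof of \Cref{lemma:Nu_Hs}, e.g.\ Thm.\ 1 of \cite{G014}), which yields, component by component,
\[
\big \lVert D^s \langle \beta u, u-v\rangle \big \rVert_{L^\mu} \lesssim \big \lVert D^s u \big \rVert_{L^{\gamma_1}} \lVert u-v \rVert_{L^{\gamma_2}} + \lVert u \rVert_{L^{\mu_2}} \big \lVert D^s (u-v) \big \rVert_{L^{\mu_1}},
\]
\[
\big \lVert D^s \langle \beta (u-v), v\rangle \big \rVert_{L^\mu} \lesssim \big \lVert D^s(u-v) \big \rVert_{L^{\mu_1}} \lVert v \rVert_{L^{\mu_2}} + \lVert u-v \rVert_{L^{\gamma_2}} \big \lVert D^s v \big \rVert_{L^{\gamma_1}},
\]
provided the exponents satisfy the stated H\"older relations. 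Summing these two bounds and regrouping the right-hand side by whether the $D^s$ lands on $u-v$ or on $u,v$ produces exactly the claimed inequality, the symmetric dependence on $u$ and $v$ arising automatically from the two summands.

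No real obstacle is expected here: the only subtlety is the bookkeeping needed to see that the matrix-valued pairing $\langle \beta \cdot, \cdot \rangle$ behaves, from the point of view of the Leibniz rule, like a scalar product of the components of $u$ and $v$ (with bounded coefficients coming from $\beta$), so that the scalar fractional Leibniz rule applies to each entry and the sum is controlled by the $L^p$ norms of $u,v,u-v$ and their $D^s$'s. A brief remark to this effect, analogous to the one at the start of the proof of \Cref{lemma:Nu_Hs}, is all that is needed to make the argument rigorous.
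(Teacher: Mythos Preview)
Your proposal is correct and matches the paper's own proof: the paper also uses the add-and-subtract identity (written for scalar components as $D^s(|u|^2-|v|^2)=D^s((u-v)\bar u)+D^s(v(\bar u-\bar v))$), argues component by component, and then applies the fractional Leibniz rule. Your write-up is in fact slightly more detailed than the paper's, but the strategy is identical.
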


\begin{proof}
We notice that, if $u,v$ are scalar functions, we have
\[
D ^s ( \lvert u \rvert^2 - \lvert v \rvert^2 ) = D^s ( u \bar u - v \bar v \pm v \bar u) = D^s ( (u - v) \bar u ) + D^s ( v ( \bar u - \bar v) )
\]
Then we conclude arguing component by component and by the fractional Leibniz rule.
\end{proof}

We can now prove \Cref{thm:nla1}.
\begin{proof}
Let $T, M > 0$, we define the space
\[
X_{T,M} \coloneqq \{ u \in L^\infty_T H^s  \colon \lVert u \rVert_{L^\infty_T \dot H^s} \le M \},
\]
that, endowed with the metric 
\[
d(u,v) = \lVert u - v \rVert_{L^\infty_T \dot H^{ s}},
\]
is a complete metric space, and the map 
\begin{equation}
\label{map}
\Phi (u)(t,x) \coloneqq e^{i t \D_{\nu}} u_0(x) - i \int_0^t e^{i (t-s) \D_{\nu}} N (u) (s,x) ds.
\end{equation}
Then, the proof relies on standard contraction argument; we want to find $T,M$ such that $\Phi : X_{T,M} \rightarrow X_{T,M}$ is a contraction map on $(X_{T,M},d)$. \\
\emph{Step 1}: 
We observe that, since $s < \tfrac32$, $\dot H^s \hookrightarrow L^{\frac{6}{3 - 2s}}$ and that \Cref{lemma:Nu_Hs} holds with the choice
\[
\begin{cases}
\mu_2 = p_2 = 2 \beta = \frac6{3-2s} = 2p',\\
\alpha = \gamma = \frac3{2s} = p.
\end{cases}
\]
Moreover, since $s \le 1$ and $\lvert \nu \rvert < \frac{\sqrt3}2$, $\lVert u \rVert_{\dot H^s} \simeq \lVert u \rVert_{\dot H_{\D_{\nu}}^s}$ (\Cref{prop:equi_norm_3d}). Then, from Minkowski's inequality and \eqref{Nu_Hs}, we have
\[
\begin{split}
\big \lVert \Phi (u) \big \rVert_{L^\infty_T \dot H^s} & \lesssim \lVert u_0 \rVert_{\dot H^s} + \int_0^T \big \lVert  e^{i (t-\tau) \D_{\nu}} N(u)(\tau,x) \big \rVert_{L^\infty_T \dot H^s} d\tau \\
& \lesssim \lVert u_0 \rVert_{\dot H^s} + \int_0^T \big \lVert N(u) \big \rVert_{\dot H^s} d\tau\\
& \lesssim \lVert u_0 \rVert_{\dot H^s} + \lVert \omega \rVert_{L^p} \int_0^T \lVert u \rVert_{\dot H^s} \lVert u \rVert_{L^{\frac{6}{3 - 2s}}}^2 d\tau\end{split}
\]
and, from the Sobolev's embedding, we get
\[
\big \lVert \Phi (u) \big \rVert_{L^\infty_T \dot H^s} \lesssim \lVert u_0 \rVert_{\dot H^s} + \lVert \omega \rVert_{L^p} T \lVert u \rVert_{L^\infty_T \dot H^s}^3 \le C \lVert u_0 \rVert_{\dot H^s} + C \lVert \omega \rVert_{L^p} T M^3,
\]
for some $C >0$.\\
\emph{Step 2}: Proceeding as before, we get
\[
\begin{split}
\big \lVert \Phi(u) - \Phi(v) \big \rVert_{L^\infty_T \dot H^s} & \lesssim \int_0^T \big \lVert N(u) - N(v) \big \rVert_{\dot H^s} d\tau \\
&\lesssim \int_0^T \Big [ \big \lVert \big ( \omega \ast \big ( \langle \beta u,u \rangle - \langle \beta v, v \rangle \big ) \big )u \big \rVert_{\dot H^s} + \big \lVert \big ( \omega \ast \langle \beta v, v \rangle \big ) (u-v) \big \rVert_{\dot H^s} \Big ] d\tau. \\
& = \int_0^T (I + II)(\tau) \, d\tau.
\end{split}
\]
We estimate separately $I$ and $II$; from \Cref{lemma:Nu_Hs} and Young's inequality, we have
\[
\begin{split}
I & \lesssim \big \lVert \omega \ast \langle D \rangle^s \big (\langle \beta u,u \rangle - \langle \beta v, v \rangle \big ) \big \rVert_{L^{\frac3s}} \lVert u \rVert_{L^{\frac6{3-2s}}} + \big \lVert \omega \ast \big ( \langle \beta u,u \rangle - \langle \beta v, v \rangle \big ) \big \rVert_{L^\infty}\lVert u \rVert_{\dot H^s} \\
& \lesssim \lVert \omega \rVert_{L^p} \big \lVert \langle D \rangle^s \big (\langle \beta u,u \rangle - \langle \beta v, v \rangle \big ) \big \rVert_{L^{\frac3{3-s}}} \lVert u \rVert_{L^{\frac6{3-2s}}}+ \lVert \omega \rVert_{L^p} \lVert u - v \rVert_{L^{\frac6{3-2s}}} \big \lVert \lvert u \rvert + \lvert v \rvert \big \rVert_{L^{\frac6{3-2s}}} \lVert u \rVert_{\dot H^s}
\end{split}
\]
where for the second term we have used that, for two scalars functions $f,g$, $\big \lvert \lvert f \rvert^2 - \lvert g \rvert^2 \big \rvert \le \lvert f-g \rvert (\lvert f \rvert + \lvert g \vert)$.\footnote{Then, for $u,v$ is true that $\big \lvert \langle \beta u, u \rangle - \langle \beta v,v \rangle \big \rvert \le 4 \lvert u-v \rvert (\lvert u \rvert + \lvert v \rvert)$.}.
To estimate the first term we observe that \Cref{lemma:H_s-difference} holds taking
\[
\begin{cases}
\mu_1 = \gamma_1 =2,\\
\mu_2 = \gamma_2 = \frac6{3-2s},
\end{cases}
\]
then by the Sobolev's embedding, we have
\[
I \lesssim \lVert \omega \rVert_{L^p} \lVert u -v \rVert_{\dot H^s}  \lVert u \rVert_{\dot H^s} \big ( \lVert u \rVert_{\dot H^s} + \lVert v \rVert_{\dot H^s} \big ) 
\]
In addition, we have 
\[
\begin{split}
II & \lesssim \big \lVert \omega \ast \langle D \rangle^s \langle \beta v, v \rangle \big \rVert_{L^{\frac3s}} \lVert u-v \rVert_{L^{\frac6{3-2s}}} + \big \lVert \omega \ast \langle \beta v, v \rangle \big \rVert_{L^\infty} \lVert u-v \rVert_{\dot H^s} \\
& \lesssim \lVert \omega \rVert_{L^p} \lVert v \rVert_{L^{\frac6{3-2s}}} \lVert v \rVert_{\dot H^s} \lVert u-v \rVert_{L^{\frac6{3-2s}}} + \lVert \omega \rVert_{L^p} \lVert v \rVert_{L^{\frac6{3-2s}}}^2 \lVert u-v \rVert_{\dot H^s}.
\end{split}
\]
Putting all the estimates together, we have obtained that there exists $\tilde C >0$ such that
\[
\big \lVert \Phi(u) - \Phi(v) \big \rVert_{L^\infty_T \dot H^s} \le \tilde C \lVert \omega \rVert_{L^p}  \lVert u-v \rVert_{L^\infty_T \dot H^s} \int_0^T \big ( \lVert u \rVert_{\dot H^s} + \lVert v \rVert_{\dot H^s} \big) ^2 d\tau.
\]
Then, if we choose $T,M$ such that
\[
\begin{cases}
C \lVert u_0 \rVert_{\dot H^s} \le \frac{M}2,\\
T < \min \big \{ \frac{1}{2C \lVert \omega \rVert_{L^p} M^2}, \frac1{8 \tilde C \lVert \omega \rVert_{L^p} M^2} \big \},
\end{cases}
\]
$\Phi$ maps $X_{T,M}$ into itself and it is a contraction on $(X_{T,M}, d)$ and applying the Banach fixed-point theorem we get the claim.
\end{proof}

Now we turn to the proof of Theorem \ref{thm:nla2}; notice that in the proof above we didn't use any kind of Strichartz estimates. We will exploit them in the following.

\begin{proof}
The idea is to apply the Banach contraction theorem on a ball in the space $Y_T = L^\infty \big ([0,T]; \dot H^s (\R^3)\big ) \cap L^r([0,T]; L^q (\R^3))$, $T >0$, endowed with the norm
\[
\lVert u \rVert_{Y_T} \coloneqq  \sup_{t \in [0,T]} \lVert u \rVert_{\dot H^s(\R^3)} + \lVert u \rVert_{L^r([0,T], L^q(\R^3))},
\]
for some $(r,q)$ admissibile for \Cref{prop:stri_rad} so that
\[
\lVert e^{it \D_{\nu}} u_0 \rVert_{L^r_t L^q_x} \le C \lVert u_0 \rVert_{\dot H^s},
\]
where $s = \frac32 -\frac1r - \frac3q$ is such that $s \le 1$. We choose $(r,q)=(2p, 2p')$, where $p'$ is the conjugate exponent of $p$. We claim that this is an admissible couple and that we can find $T,M > 0$ such that
 the map $\Phi$ is as in \eqref{map}, is a contraction on the complete metric space $(Y_{T,M}, d)$, where 
\[
Y_{T,M} \coloneqq \big \{ u \in L^\infty_T \dot H^s \cap L^{2p}_T L^{2p'} \colon \lVert u \rVert_{Y_T} \le M \big \}, \quad d(u,v) \coloneqq \lVert u-v \rVert_{Y_T}.
\]
Let us now prove the claim.  We notice that if $p \in \big ( \frac{3}{1 + 2\sqrt{1-\nu^2}}, + \infty)$ then $(2p, 2p')$ satisfies conditions \eqref{eq:adm_cond_Dir-rad} and $s = \frac32 - \frac1{2p} - \frac3{2p'} = \frac1p \le 1$. Moreover, estimate \eqref{Nu_Hs} holds with the following choice of indexes
\[
\begin{cases}
\mu_2 = p_2 = 2\beta = 2p',\\
\alpha=\gamma=p,
\end{cases}
\]
Then, proceeding as before, we get
\[
\begin{split}
\big \lVert \Phi (u) \big \rVert_{\dot H^s} & \lesssim \lVert u_0 \rVert_{\dot H^s} + \int_0^T \big \lVert e^{i(t-\tau) \D_{\nu}} N(u) (\tau,x) \big \rVert_{L^{\infty}_T \dot H^s} d\tau \\
 & \lesssim \lVert u_0 \rVert_{\dot H^s} + \int_0^T \lVert N(u) \rVert_{\dot H^s} d\tau \\
& \lesssim \lVert u_0 \rVert_{\dot H^s} + \lVert \omega \rVert_{L^p} \lVert u \rVert_{L^{\infty}_T \dot H^s} \int_0^T \lVert u(\tau) \rVert_{L^{2p'}}^2 d\tau \\
& \lesssim \lVert u_0 \rVert_{\dot H^s} + T^{1 - \frac1p} \lVert \omega \rVert_{L^p} \lVert u \rVert_{L^{\infty}_T \dot H^s} \lVert u \rVert_{L^{2p}_T L^{2p'}} ^2\\
\end{split}
\]
and 
\[
\big \lVert \Phi(u) \big \rVert_{L^{2p}_T L^{2p'}} \lesssim \lVert u_0 \rVert_{\dot H^s} + T^{1 - \frac1p} \lVert \omega \rVert_{L^p} \lVert u \rVert_{L^\infty_T \dot H^s} \lVert u \rVert_{L^{2p}_T L^{2p'}}^2.
\]
Moreover
\[
\begin{split}
\big \lVert \Phi(u) - \Phi(v) \big \rVert_{Y_T}&\lesssim \int_0^T \big \lVert N(u) - N(v) \big \rVert_{\dot H^s} d\tau \\
& \lesssim \int_0^T \Big [ \big \lVert \big ( \omega \ast \big ( \langle \beta u,u \rangle - \langle \beta v, v \rangle \big ) \big )u \big \rVert_{\dot H^s} + \big \lVert \big ( \omega \ast \langle \beta v, v \rangle \big ) (u-v) \big \rVert_{\dot H^s} \Big ] d\tau \\
& = \int_0^T (I+II) (\tau) \, d\tau.
\end{split}
\]
We estimate $I, II$ separately; from \Cref{lemma:Nu_Hs} and Young's inequality, we have
\[
\begin{split}
I & \lesssim \big \lVert \omega \ast  D^s \big (\langle \beta u,u \rangle - \langle \beta v, v \rangle \big ) \big \rVert_{L^{2p}} \lVert u \rVert_{L^{2p'}} + \big \lVert \omega \ast \big ( \langle \beta u,u \rangle - \langle \beta v, v \rangle \big ) \big \rVert_{L^\infty}\lVert u \rVert_{\dot H^s} \\
& \lesssim \lVert \omega \rVert_{L^p} \big \lVert D^s \big ( \langle \beta u, u \rangle - \langle \beta v,v \rangle \big) \big \rVert_{L^{\mu}}\lVert u \rVert_{L^{2p'}} + \lVert \omega \rVert_{L^p} \big \lVert \langle \beta u,u \rangle - \langle \beta v, v \rangle \big \rVert_{L^{p'}} \lVert u \rVert_{\dot H^s},
\end{split}
\]
where $\mu = (2p)'$. From \Cref{lemma:H_s-difference}, we can estimate the $L^\mu$ norm as
\[
\big \lVert D^s \big ( \langle \beta u, u \rangle - \langle \beta v,v \rangle \big) \big \rVert_{L^{\mu}} \lesssim \lVert u- v \rVert_{\dot H^s} (\lVert u \rVert_{L^{2p'}} + \lVert v \rVert_{L^{2p'}} ) + \lVert u - v \rVert_{L^{2p'}} (\lVert u \rVert_{\dot H^s} + \lVert v \rVert_{\dot H^s}).
\]
Then, we can continue the chain of inequalities
\[
\begin{split}
I & \lesssim \lVert \omega \rVert_{L^p} \lVert u-v \rVert_{L^\infty_T \dot H^s}  (\lVert u \rVert_{L^{2p'}}  + \lVert v \rVert_{L^{2p'}} ) \lVert u \rVert_{L^{2p'}} + \\
& + \lVert \omega \rVert_{L^p} \lVert u-v \rVert_{L^{2p'}} \big [ \big ( \lVert u \rVert_{L^\infty_T \dot H^s} + \lVert v \rVert_{L^\infty_T \dot H^s})  \lVert u \rVert_{L^{2p'}} + \lVert u \rVert_{L^\infty_T \dot H^s} \lVert \lvert u \rvert + \lvert v \rvert \rVert_{L^{2p'}} \big]
\end{split}
\]
We estimate $II$ using again Young's inequality and fractional Leibniz rule, getting
\[
II \le \lVert \omega \rVert_{L^p} \lVert u-v\rVert_{L^\infty_T \dot H^s} \lVert v \rVert_{L^{2p'}}^2 + \lVert \omega \rVert_{L^p} \lVert u- v\rVert_{L^{2p'}} \lVert u \rVert_{L^\infty_T \dot H^s} \lVert v \rVert_{L^{2p'}}.
\]
Summing up, there exist two positive constants $C, \tilde C$ such that
\begin{gather}
\notag \big \lVert \Phi (u) \big \rVert_{Y_T} \le C \lVert u_0 \rVert_{Y_T} + C T^{1-\frac1p} \lVert \omega \rVert_{L^p} \lVert u \rVert^3_{L_T^{2p} L^{2p'}}, \\
\notag \big \lVert \Phi(u) - \Phi(v) \big \rVert_{Y_T} \le \tilde C T^{1 - \frac1p} \lVert \omega \rVert_{L^p}(\lVert u \rVert_{Y_T} + \lVert v \rVert_{Y_T}) ^2 \lVert u- v \rVert_{Y_T}.
\end{gather}
To conclude, if we choose $T,M > 0$ such that 
\[
\begin{cases}
C \lVert u_0 \rVert_{\dot H^s} \le \frac M2,\\
T^{1 - \frac1p} < \min \big \{ \frac1{2 C \lVert \omega \rVert_{L^p} M^2}, \frac1{8 \tilde C \lVert \omega \rVert_{L^p} M^2} \big \},
\end{cases}
\]
we get the claim.
\end{proof}

\medskip
{\bf Acknowledgments.} I wish to thank prof.\ Federico Cacciafesta for bringing this problem to my attention and for the constant guidance on my work. I am also grateful to prof.\ Anne-Sophie de Suzzoni for providing helpful comments and to the CMLS, École Polytechnique where I did part of this work. I was supported by the SS Chern Young Faculty Award funded by AX. Moreover, I am a member of GNAMPA-INDAM and I was partially supported by the INDAM-GNAMPA Project 2023 ``Analisi spettrale per equazioni di Dirac ed applicazioni'' CUPE53C22001930001.

\end{document}